\newcommand{\simZ}{\sim} 
\newtheorem{theorem}{Theorem}
\newtheorem{corollary}[theorem]{Corollary}
\theoremstyle{definition}
\newtheorem{example}[theorem]{Example}
\theoremstyle{lemma}
\newtheorem{lemma}[theorem]{Lemma}
\theoremstyle{remark}
\newtheorem{remark}[theorem]{Remark}
\newtheorem{assumption}[theorem]{Assumption}
\numberwithin{theorem}{section}
\numberwithin{equation}{section}
\numberwithin{table}{section}
\numberwithin{figure}{section}
\newcommand{\quotes}[1]{``#1''}
\definecolor{myBlue}{RGB}{113,104,238} 
\definecolor{myGreen}{RGB}{154,205,50} 
\definecolor{myGreen2}{RGB}{114,175,30} 
\definecolor{myRed}{RGB}{180,50,50}  
\definecolor{myOrange}{RGB}{225,92,22} 
\definecolor{lgray}{RGB}{200,200,200} 
\definecolor{llgray}{RGB}{155,155,155} 
\newcommand\N{\mathbb N}
\newcommand\R{\mathbb R}
\DeclareMathOperator{\sspan}{span}
\DeclareMathOperator{\supp}{supp}
\DeclareMathOperator{\id}{id}
\DeclareMathOperator{\range}{im}
\DeclareMathOperator{\tol}{tol}
\DeclareMathOperator{\gap}{gap}
\DeclareMathOperator{\plog}{polylog}
\newcommand\calA{\mathcal A}
\newcommand\calB{\mathcal B}
\newcommand\calC{\mathcal C}
\newcommand\calH{\mathcal H}
\newcommand\calI{\mathcal I}
\newcommand\calN{\mathcal N}
\newcommand\calO{\mathcal O}
\newcommand\calP{\mathcal P}
\newcommand\calQ{\mathcal Q}
\newcommand\calT{\mathcal T}
\newcommand\calV{\mathcal V}
\newcommand\calW{\mathcal W}
\newcommand\calQa{\mathcal Q_\alpha}
\newcommand\calQb{\mathcal Q_\beta}
\newcommand\calQnu{\mathcal Q_\nu}
\newcommand\calPP{\mathcal R}		
\newcommand{\hook}{\ensuremath{\hookrightarrow}}
\newcommand\eps{\varepsilon}
\def\ds{\,\text{d}s}
\def\dx{\,\text{d}x}
\def\dy{\,\text{d}y}
\def\dz{\,\text{d}z}
\newcommand\E{E}  
\newcommand\V{\calV}   
\newcommand\W{\calW}   
\newcommand\Hper{H^1_\text{per}(D)}
\newcommand\Vvert{{|\hskip-0.9pt|\hskip-0.9pt|}}
\newcommand\bigVvert{\big|\hskip-0.9pt\big|\hskip-0.9pt\big|}
\newcommand\Oa{{D_\alpha}}  
\newcommand\Ob{{D_\beta}}  
\newcommand\Onu{{D_\nu}}  
\newcommand\calTa{{\calT_\alpha}}
\newcommand\calTb{{\calT_\beta}}
\newcommand\calNb{{\calN_\text{rem}}}
\newcommand\lamz{{\lambda_z}}
\newcommand\PQ{\calP_Q}
\newcommand\Pz{\calP_z}
\newcommand\tPz{{\tilde\calP}_z}
\newcommand{\ISZ}{I^\text{sz}}
\newcommand\cl{c_L}
\DeclareMathOperator{\err}{err}
\definecolor{myBlue3}{RGB}{60,124,155} 
\definecolor{lGray}{RGB}{180,180,180}
\begin{document}
\title[Quantitative Anderson localization of Schr\"odinger states]{Quantitative Anderson localization of Schr\"odinger eigenstates under disorder potentials}
\author[]{R.~Altmann$^*$, P.~Henning$^{\dagger}$, D.~Peterseim$^*$}
\address{${}^{*}$ Department of Mathematics, University of Augsburg, Universit\"atsstr.~14, 86159 Augsburg, Germany}
\address{${}^{\dagger}$ Department of Mathematics, KTH Royal Institute of Technology, SE-100 44 Stockholm, Sweden}
\email{\{robert.altmann, daniel.peterseim\}@math.uni-augsburg.de, pathe@kth.se}
\thanks{R.~Altmann acknowledges support from the Einstein Center ECMath via project OT10. P.~Henning acknowledges funding by the Swedish Research Council (grant 2016-03339). D.~Peterseim acknowledges support by Deutsche Forschungsgemeinschaft in the Priority Program 1748 {\it Reliable simulation techniques in solid mechanics} (PE2143/2-2). Further, the authors thank the Hausdorff Institute for Mathematics in Bonn for the kind hospitality during the trimester program on multiscale problems in 2017}
\date{\today}
\keywords{}
%
%
\begin{abstract}
This paper analyzes spectral properties of linear Schr\"odinger operators under oscillatory high-amplitude potentials on bounded domains. Depending on the degree of disorder, we prove the existence of spectral gaps among the lowermost eigenvalues and the emergence of exponentially localized states. We quantify the rate of decay in terms of geometric parameters that characterize the potential. The proofs are based on the convergence theory of iterative solvers for eigenvalue problems and their optimal local operator preconditioning by domain decomposition.
\end{abstract}
%
%
\maketitle
\setcounter{tocdepth}{2}
{\tiny {\bf Key words.} Random potential, disorder, spectral theory, iterative eigenvalue solver, domain decomposition}\\
\indent
{\tiny {\bf AMS subject classifications.}  {\bf 65N25}, {\bf 65N55}, {\bf 35P15}, {\bf 81Q10}, {\bf 47B80}} 
%
%
%
\section{Introduction}
Linear Schr\"odinger operators $\calH$ of the form $\calH:=-\triangle\,+\,V$ composed of the $d$-dimen\-sio\-nal Laplacian $\triangle$ and a non-negative potential $V$ are an important building block for the mathematical modeling of quantum-physical processes related to ultracold bosonic or photonic gases -- so-called Bose-Einstein condensates \cite{Gro61,Pit61,LSY01,Alaeian_2017}. The cases where the potential $V$ exhibits disorder \cite{NBP13} or represents quantum arrays in the context of Josephson oscillations \cite{WWW98,ZSL98} have raised particular attention. Surprising phenomena such as Anderson localization of eigenfunctions \cite{And58} are connected to such oscillatory and disordered potentials. Anderson localization refers to exponentially localized low-energy stationary states which are caused exactly by the interplay of disorder (randomness) and high amplitude (contrast) of the potential trap. For matter waves, this has been experimentally observed, cf.~\cite{2008Natur.453..891B,2008Natur.453..895R}. While localization is understood qualitatively in an asymptotic sense or can be justified a posteriori in the mathematical model, the a priori prediction of the localization effect in terms of geometric parameters that characterize the potential and its degree of disorder remained open and this paper aims to close this gap. 
\smallskip

We consider a bounded domain $D\subset\R^d$ and prototypical disorder potentials that vary randomly between two values $\alpha$ and $\beta$, where $\alpha \ll \tfrac{1}{\eps^2} \le \beta$ on a small scale $\eps$. The main result of the paper states that sufficiently disordered potentials imply the existence of $K$ points $z_1,\ldots,z_K\in D$ and some rate $c>0$, which only depends logarithmically on~$\eps$, such that the ground state $u_1$ satisfies 
\begin{equation*}
  \Vvert u_1\Vvert_{D \setminus \bigcup_{j=1}^K B(z_j;\,\eps k^2)}
  \lesssim e^{-ck}\, \Vvert u_1\Vvert_D
\end{equation*}
for all $k=1,2,\ldots$. 
Here $\Vvert\cdot\Vvert_\omega^2:=\Vert \nabla v\Vert_\omega^2 + \Vert V^{1/2}\cdot\Vert_\omega^2$ denotes the energy norm and $B(z;\, r)$ the ball of radius~$r$ centered at~$z$ in the sup norm. This is illustrated in Figure~\ref{fig:locstates} for a prototypical disorder potential. Similar results hold true for eigenstates in a certain range of energies at the bottom of the spectrum. 

The proof of existence of exponentially localized states consists of three main steps. The first step is the quantification of the exponential decay of the Green's function associated with $\calH$ in terms of the oscillation length and the amplitude of the potential in Section~\ref{sec:precond}. Disorder is not essential at this point. This novel result is inspired by numerical homogenization for arbitrarily rough diffusion tensors \cite{MalP14,HenP13} which in turn is closely connected to the exponential decay of the corrector Green's function \cite{Pet16}. An elegant new proof of the latter decay property was later given in \cite{KorY16, KorPY18}. This pioneering approach employs classical results from domain decomposition and the convergence theory of iterative solvers for linear systems arising from the discretization of partial differential equations.

\begin{figure}
	\centering
	\includegraphics[width=0.29\linewidth]{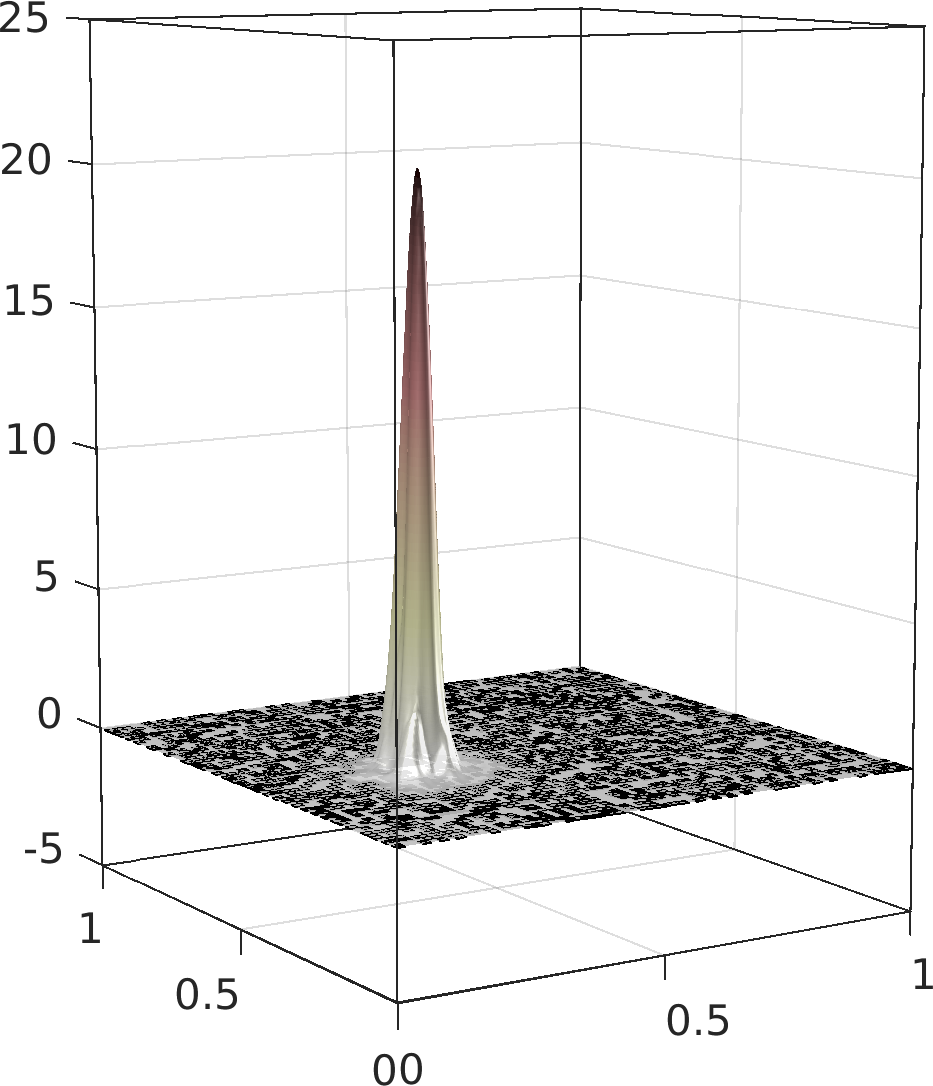}\hspace{2ex}
	\includegraphics[width=0.32\linewidth]{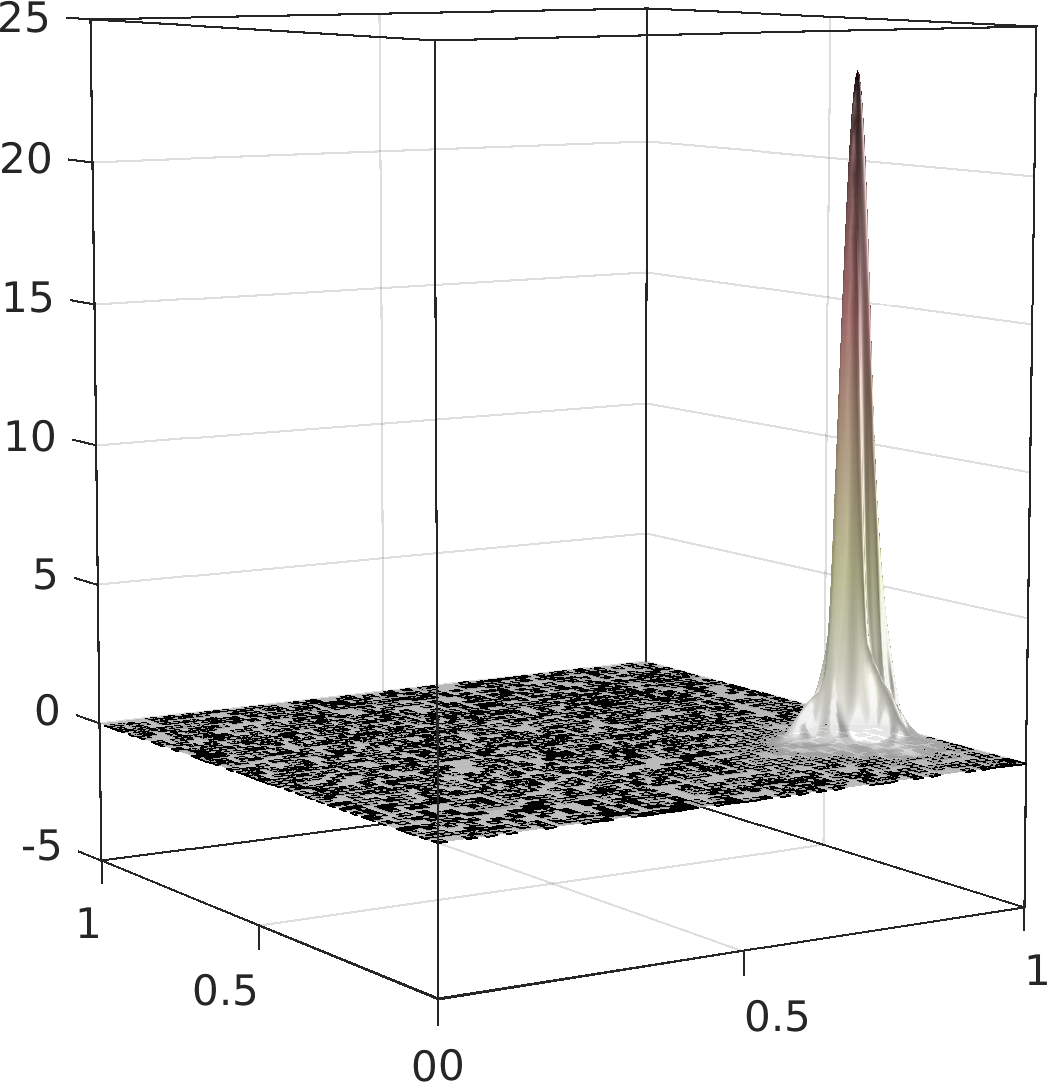}\hspace{2ex}
	\includegraphics[width=0.32\linewidth]{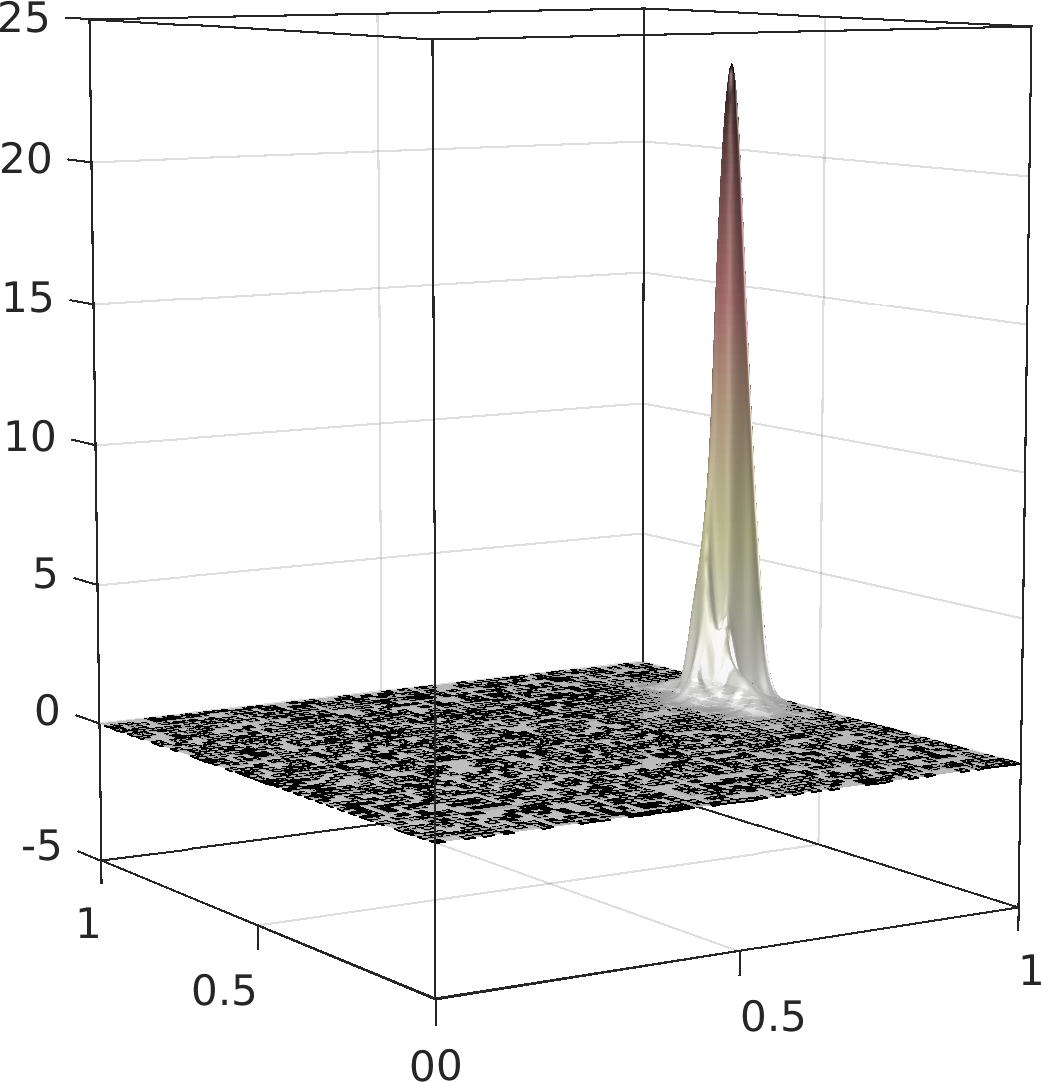}\\[1ex]
	\includegraphics[width=0.32\linewidth]{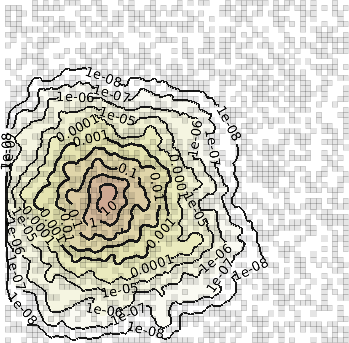}\hspace{1ex}
	\includegraphics[width=0.32\linewidth]{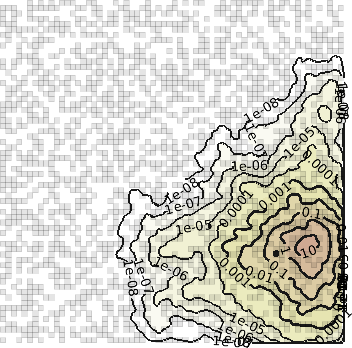}\hspace{1ex}
	\includegraphics[width=0.32\linewidth]{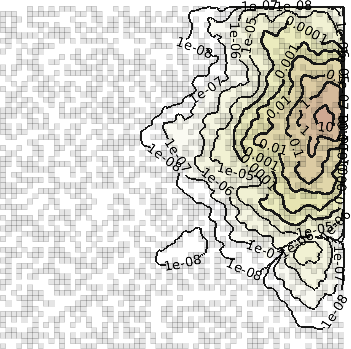}
	\caption{Schr\"odinger eigenstates (homogenous Dirichlet Boundary condition) associated to the three smallest eigenvalues (from left to right) in a disorder potential. Top row: Graphs of eigenstates. Bottom row: Isolines of moduli representing exponential decay in scales of $\varepsilon$. The disorder potential is a random checkerboard on a Cartesian mesh of the unit square of width $\epsilon=2^{-6}$ taking values $\beta = 4/\varepsilon^2$ (black) and $\alpha = 1$ (white).
	\label{fig:locstates}}
\end{figure}

\begin{figure}
	\centering
	\includegraphics[width=0.4\linewidth]{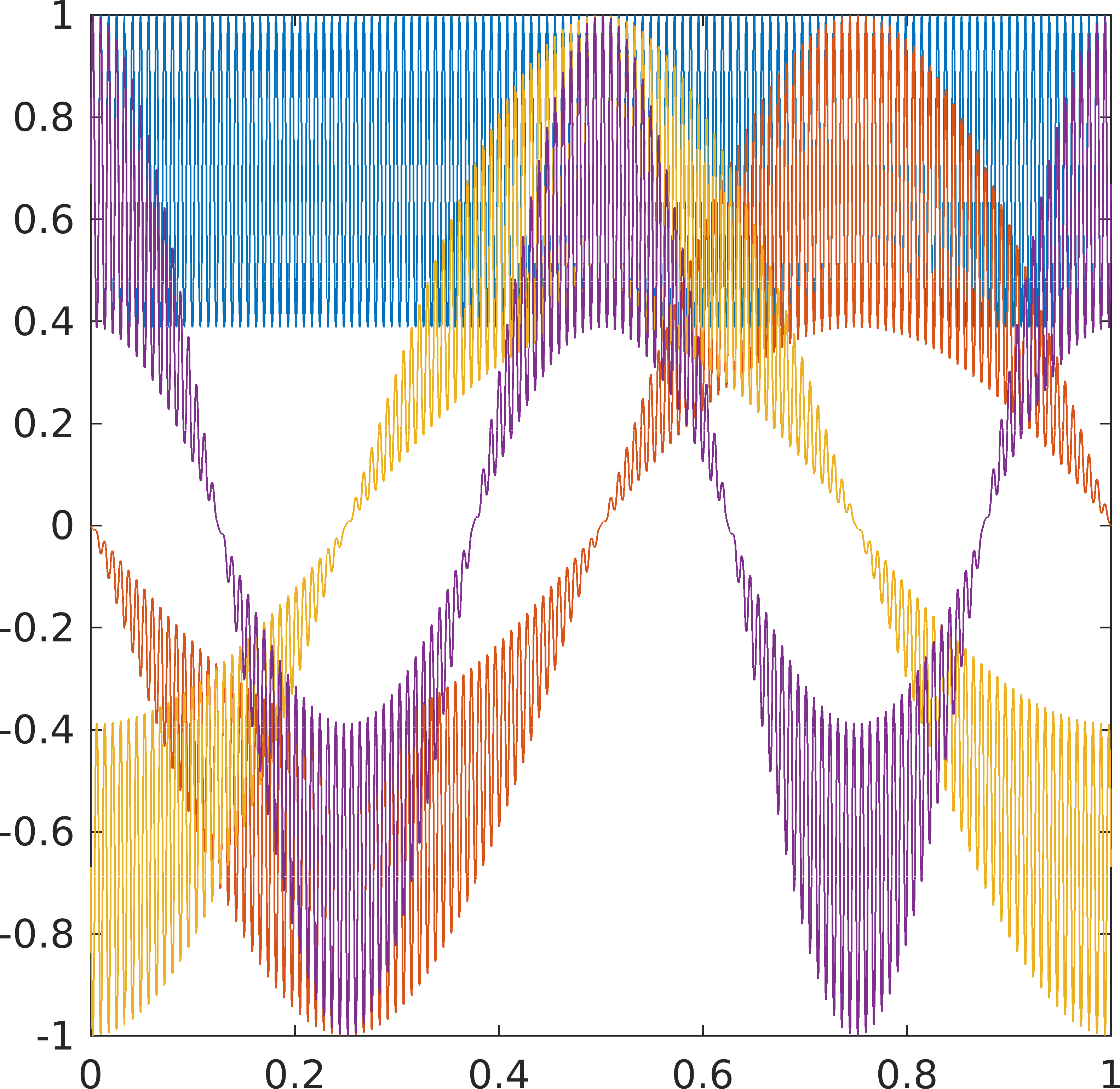}\hspace{2ex}
	\includegraphics[width=0.4\linewidth]{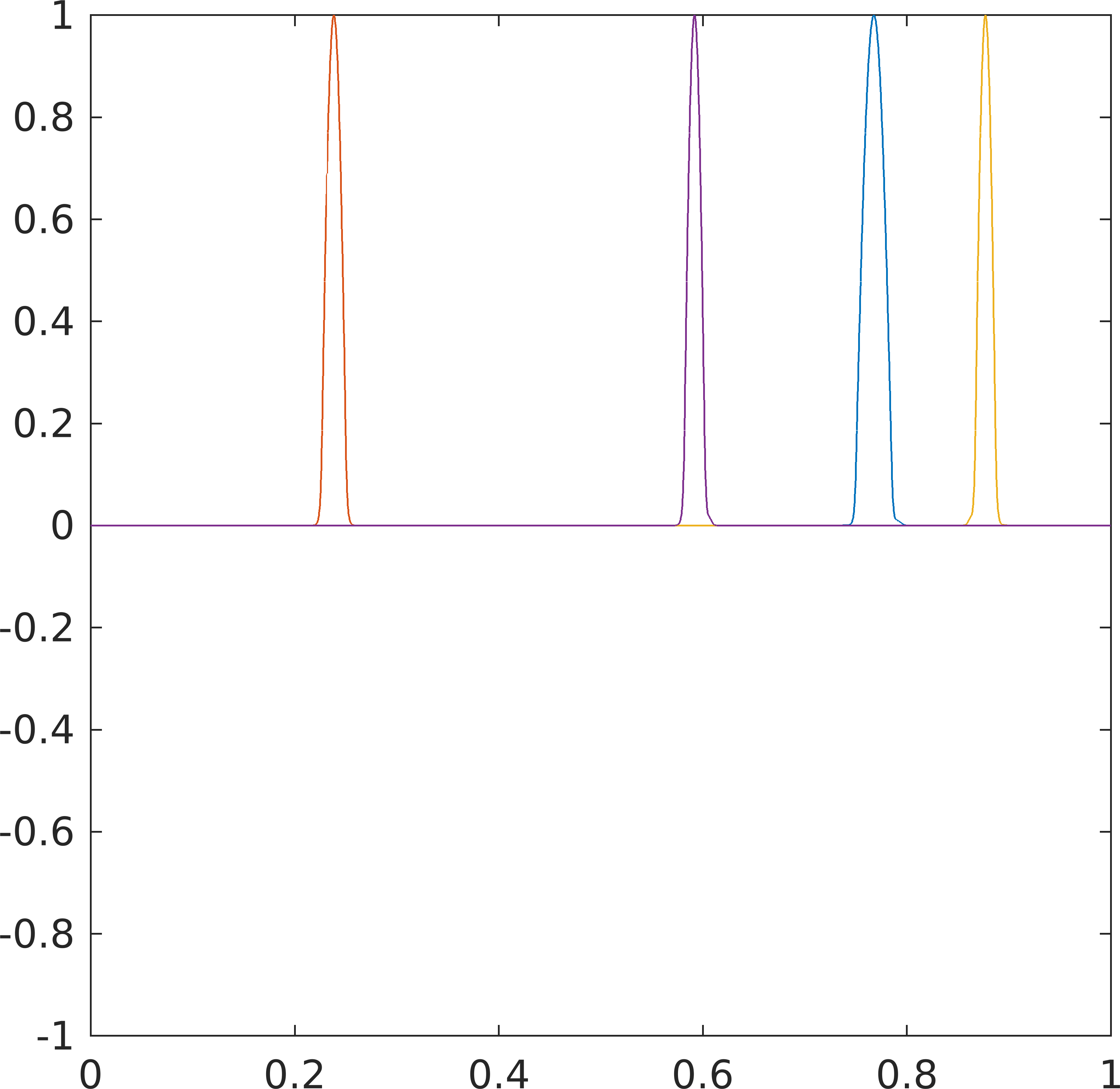}\\[1ex]
	\includegraphics[width=0.4\linewidth]{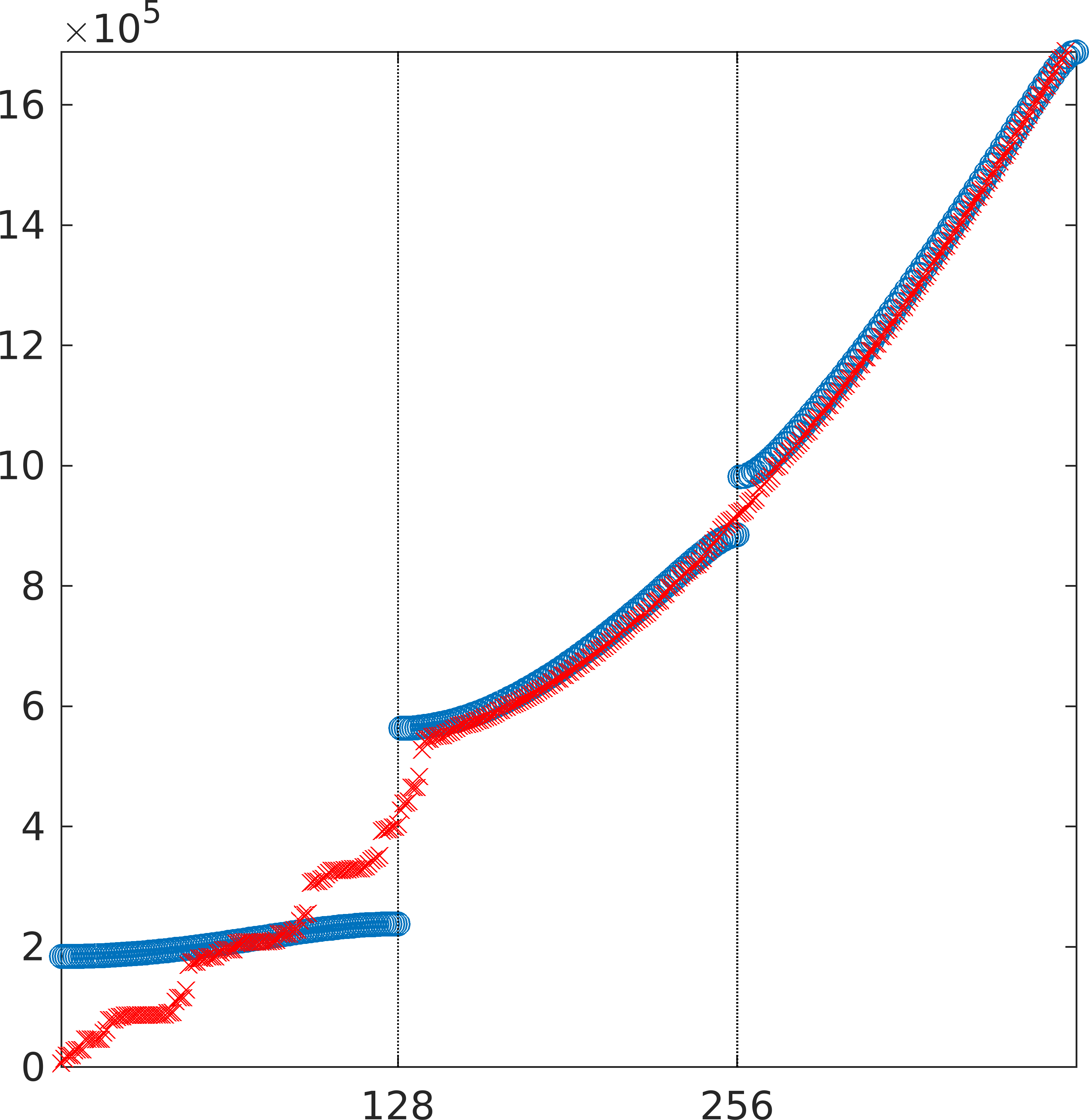}\hspace{2ex}
	\includegraphics[width=0.4\linewidth]{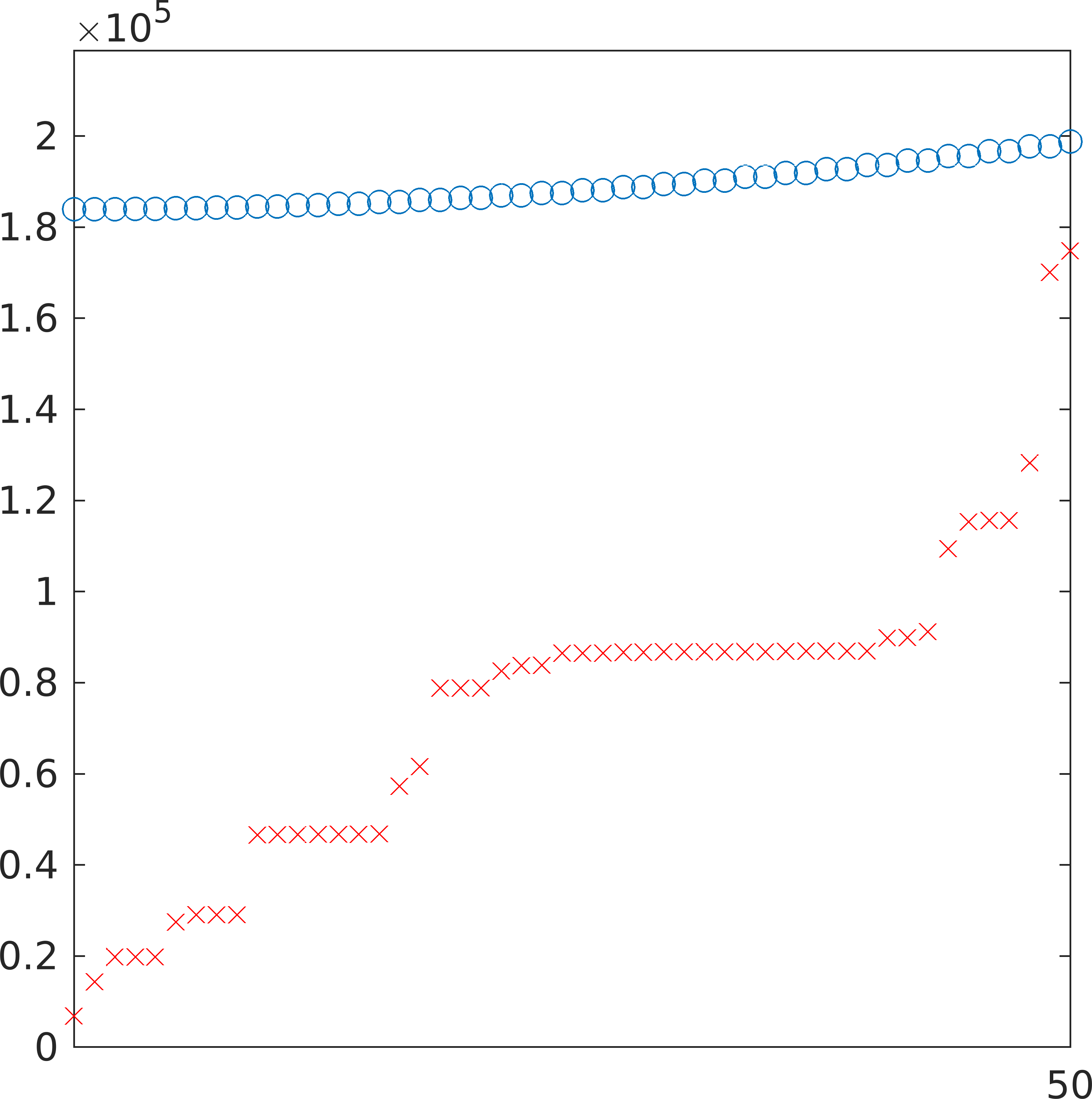}
	\caption{Schr\"odinger eigenstates in periodic and disorder potential on the unit interval. Top left: Eigenstates associated to the four smallest eigenvalues in a discontinuous periodic potential oscillating on a partition of width $\varepsilon=2^{-8}$ with values $\alpha =1$ and $\beta = 8/\varepsilon^2$, i.e.~$V(x)=\beta$ if $\lfloor{x/\eps}\rfloor$ is even and~$V(x)=\alpha$, otherwise. Top right: Eigenstates for smallest eigenvalues in a realization of a discontinuous random potential on a partition of width $\varepsilon=2^{-8}$ oscillating randomly between i.i.d.~values~$\alpha =1$ and $\beta = 8/\varepsilon^2$. Bottom row: Spectra for periodic ({\color{blue}$\circ$}) and random (${\color{red}\times}$) potential.}\label{fig:spectra}
\end{figure}

In the second step, the decay property of the Green's function is transferred to the decay of eigenstates in the following sense. There exists a subspace of dimension $K$ that contains the lowermost eigenstates up to an arbitrary accuracy \quotes{$\tol$}. More precisely, the subspace is spanned by functions with support in local sub-domains with a diameter of order $\calO(\eps\log(1/\eps)^p\log(1/\tol))$ for some exponent~$p$ 
and, hence, the eigenfunctions are well approximated by functions supported in the union of $K$ small sub-domains. This is shown in Section~\ref{sect_localization} by designing a preconditioned block inverse iteration for the solution of the eigenvalue problem. The size of the subspace has to be chosen sufficiently large so that the rate of convergence of the block inverse iteration is only weakly-depending on $\varepsilon$ (through a factor of order $\mathcal{O}(\log(1/\eps))$.

The final step then regards the estimation of the parameter $K$ which determines whether or not the localization phenomenon can be observed in the bounded domain $D$. E.g., for perfectly periodic potentials, eigenvalues are clustered in a staircase fashion with large clusters so that $K$ is of order $\eps^{-d}$ and the local sub-domains essentially aggregate to the whole domain (see Figure~\ref{fig:spectra} for an illustration and Section~\ref{sect:gaps:periodic} for the details). Here, the presence of disorder changes the picture.
In the one-dimensional model problem of Figure~\ref{fig:spectra} significant spectral gaps are observed after a few modes so that a moderate $K$ is possible. To prove that $K$ is indeed smaller in the disordered case, we study two model scenarios in Sections~\ref{sect:gaps:random} and~\ref{sect:gaps:domino}. In the first model the potential has a tensor product structure and in the second model the potential consists of randomly structured \quotes{domino blocks}. 
We show that in both cases that $K$ is of moderate size (with high probability) and, hence, the eigenstates are essentially localized to $K$ balls of radius $\eps$ each (up to logarithmic factors). 
\smallskip 

This is not the first attempt to understand this localization phenomenon mathematically. In the remaining part of the introduction, we shall give a brief survey on what is known on the localization of eigenfunctions to the (continuous) Schr\"odinger operator.

A classical localization result for non-negative, real-valued, smooth potentials on $\mathbb{R}^d$ with sufficiently high amplitude states that eigenstates below a certain energy level are exponentially localized towards infinity, cf.~\cite[Th.~3.4]{HiS96} and \cite{Agm82}. The speed of the exponential decay can be measured in an Agmon metric and depends on $V$ and the energy $E$. As for our result, localization may also be triggered by disorder. For certain types of randomly perturbed periodic potentials $V$ in full space (excluding perfectly periodic potentials), the lowermost eigenvalues of~$\calH$ are proved to have finite multiplicity and the corresponding eigenfunctions are exponentially localized towards infinity \cite[Cor.~1.4]{GeK13}. In contrast to our result, this is an asymptotic result where the rate of decay at infinity is qualitatively described in an abstract manner.

An even earlier and one of the first results in the context of localization was obtained in a one-dimensional setting. The results says that for a certain class of random potentials in $1d$ that are generated by regular Markov diffusion processes, {\it all} eigenfunctions in the spectrum decay exponentially \cite{GoM76,GMP77}. This classical result, however, cannot be generalized to higher dimensions, even for potentials with large amplitude \cite[Sect.~4.7.5]{ChS14}. Further literature on Anderson localization for random Schr\"odinger operators in full space includes \cite{CKM87,GMR15,KLS90,KMP86,KlM06}, \cite[Sect.~7.2]{GreN13}, and the references therein. There also exists a vast literature for lattice Schr\"odinger operators (also known as discrete Schr\"odinger operators), which in particular includes Anderson's original tight binding model. Here, upper bounds for the energy are typically not necessary to prove localization, provided that the disorder is sufficiently strong. 
Here we refer to the early works~\cite{FrS83,FMS85} and~\cite{AiM93,Aiz94} as well as the monograph~\cite{ChS14} for a more recent overview on localization results for discrete Schr\"odinger operators.

A recent observation in the direction of quantitative results beyond asymptotics at infinity links the localization of the ground state $u_1$ with $\| u_1 \|_{L^{\infty}(D)}=1$ on bounded domains $D\subset \mathbb{R}^d$ to the solution $\psi \in H^1_0(D)$ of the homogeneous elliptic equation $\calH\psi=1$, cf.~\cite{FilM12}. 
This so-called landscape function $\psi$ majorizes the modulus $|u_1|$ pointwise up to the multiplicative factor $E$, being the energy of $u_1$. This bound implies that in regions where $\psi$ is small compared to $E$, $u_1$ needs to be small as well. Follow-up work \cite{ArnDJMF16,ArnDFJM19b} demonstrates that the original problem can be reformulated as an eigenvalue problem with an  effective confining potential of the form $1/\psi$. In this setting, it is possible to apply the techniques of~\cite{Agm82,HiS96} to establish an exponential decay of the eigenfunction of the form $|u_1(x)| \le \operatorname{exp}(-\rho(x_0,x))$, where $x_0$ is a center of localization of the eigenfunction and $\rho(x_0,x)$ is the distance of $x$ and $x_0$ in an Agmon metric, i.e., $\rho$ minimizes the path energy $\int_{\gamma}\sqrt{}( 1/\psi(\gamma(s)) - E )_+ \ds$ amongst all paths $\gamma$ from $x_0$ to $x$. For smooth potentials, this observation shows that the eigenfunction $u_1$ needs to change at least by the factor $2$ in some ball around $x_0$ where the (a priori unknown) difference between potential and eigenvalue becomes sufficiently large~\cite{Steinerberger2017}. Since it is not known where the landscape function $\psi$ is strictly smaller than $1/E$, the above estimate may degenerate to $|u_1(x)|\le 1$ and hence, allows no rigorous a priori prediction of the localization of~$u_1$. 
Nevertheless, the landscape function was successfully applied to obtain empirically accurate predictions of localization regions~\cite{LuS18} and its local maxima provide rough approximations of the lower-most eigenvalues~\cite{ArnDFJM19}. 

In contrast to the landscape techniques which are purely a posteriori, the new quantitative results of this paper allows one to rigorously predict the emergence of exponentially localized states depending on the degree of disorder a priori.
%
\section{Schr\"odinger Eigenvalue Problem}\label{sec:evp}
This section introduces the Schr\"odinger eigenvalue problem and discusses, for a representative class of oscillatory potentials described by suitable geometric parameters, some elementary properties such as a lower bound for the minimal energy.

Throughout this paper, we use the notion $a \lesssim b$ for the existence of a generic constant $c>0$, independent of the parameters that characterize the admissible class of the potentials~$V$, such that $a \le cb$. Moreover, we use the notion $\plog$ for polynomials in the logarithm.  
%
\subsection{Model problem}\label{sec:evp:model}
We consider the eigenvalue problem of Schr\"odinger type with a highly oscillatory potential, which may reflect disorder. The following class of potentials is representative for the localization effects to be studied in this paper while its characterization by a small number of geometric and statistical parameters simplifies the presentation significantly. 
Let $\calT$ denote a partition that divides $\mathbb{R}^d$ into closed cubes with side length $\eps>0$, where $\eps^{-1} \in \mathbb{N}$ and $\eps \mathbb{Z}^d$ is the set of vertices. The partition induces a mesh on the unit cube $D := (0,1)^d$ through the quotient space $\calT \slash_{\simZ}$
with the equivalence relation for $q_1,q_2 \in \calT$ given by 
\begin{align*}
 q_1 \simZ q_2 \qquad \Leftrightarrow \qquad q_1 = \mathbf{k} + q_2\ \mbox{ for some } \mathbf{k} \in \mathbb{Z}^d.
\end{align*}
Observe that the partition $\calT \slash_{\simZ}$ consists of equivalence classes $[q]_{\simZ}$, each with exactly one representative in $D$. This definition reflects that we can consider our problem on the unit cube, extended by periodicity to the whole $\R^d$.

Defining the space of $D$-periodic $H^1$-functions by $\V := \Hper$, the corresponding variational formulation of the eigenvalue problem reads as follows: Given a non-negative potential $0\leq V\in L^\infty(D)$, find non-trivial eigenpairs $(u, \E) \in \V\times\R$ such that 
\begin{align}
\label{eq:EVP}
  a(u, v) 
  := \int_{D} \nabla u(x) \cdot \nabla v(x) + V(x)\, u(x) v(x) \dx 
  = \E\, (u, v) 
\end{align}
for all test functions $v \in \V$. Here, $(\,\cdot\,,\cdot\,)$ denotes the $L^2$-inner product on $D$. The periodic boundary conditions encoded in this variational problem are not essential and may be replaced by homogeneous Dirichlet boundary conditions. The potential~$V$ is assumed to be piecewise constant with respect to the mesh $\calT \slash_\simZ$, cf.~Figure~\ref{fig_potentials}. This prototype of large-amplitude and highly oscillatory potentials is defined through 
\[
V(x) = \begin{cases}
\alpha, & x\in \Oa, \\
\beta,  & x\in \Ob.  
\end{cases}
\]
This means that $\Oa$ and $\Ob$ are the sub-domains of $D$, on which $V$ equals $\alpha$ and $\beta$, respectively. Further, we assume $\overline{D} = \overline{\Oa} \cup \overline{\Ob}$. The corresponding subpartitions are denoted by ${\calTa\slash_{\simZ}}$ and ${\calTb\slash_{\simZ}}$.
%
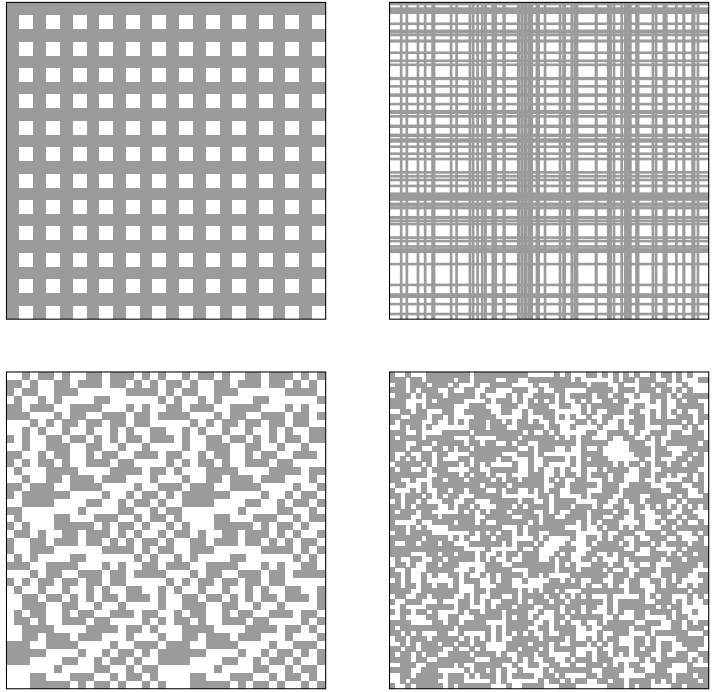
\begin{figure}
\begin{center}
\pgfmathsetseed{\number\pdfrandomseed}
\begin{tikzpicture}[scale=0.7]
%
\fill[llgray] (0.5, 1.0) -- (6.5, 1.0) -- (6.5, 7.0) -- (0.5, 7.0) -- cycle;
\foreach \i in {1, 1.5, 2, 2.5, 3, 3.5, 4, 4.5, 5, 5.5, 6, 6.5} {
	\foreach \j in {1, 1.5, 2, 2.5, 3, 3.5, 4, 4.5, 5, 5.5, 6, 6.5} {
		\fill[white] (\i,\j) -- (\i-0.25,\j) -- (\i-0.25,\j+0.25) -- (\i,\j+0.25);	
	}
}
%
%
\foreach \x in {1, ..., 65} {
	\pgfmathparse{rnd}
	\pgfmathsetmacro{\r}{\pgfmathresult}
	\draw[llgray, xshift=7.2cm, line width = 1] (\r*6+0.5, 1) -- (\r*6+0.5, 7);
}
\foreach \y in {1, ..., 65} {
	\pgfmathparse{rnd}
	\pgfmathsetmacro{\r}{\pgfmathresult}
	\draw[llgray, xshift=7.2cm, line width = 1] (0.5, \r*6+1) -- (6.5, \r*6+1);
}
%
%
\foreach \x in {0.5, 0.6, ..., 6.5} {
\foreach \y in {1, 1.1, ..., 7.0} {
    \pgfmathparse{rnd}
    \pgfmathsetmacro{\r}{\pgfmathresult}
    \ifthenelse{\lengthtest{\r pt < 0.55 pt}}{\fill[llgray, thick, yshift=-7cm, xshift=7.2cm] (\x, \y) rectangle +(0.1, 0.1);}{}
}
}
%
%
\begin{scope}[transform canvas={scale=6/40, xshift=2.3cm, yshift=-28cm}]
\foreach \p in {(0,3), 
	(0,4), 
	(0,5),
	(1,3),  
	(1,4),
	(1,5),   
	(1,8),   
	(1,9),
	(2,3),  
	(2,4),
	(2,5),
	(2,6),   
	(2,7),     
	(2,8),   
	(3,0),   
	(3,1),
	(3,4),
	(3,6),
	(3,7),
	(3,9),
	(4,0),
	(4,1),     
	(4,4),
	(4,7),
	(4,8),
	(4,9),
	(5,0),        
	(5,5),
	(5,6), 
	(5,7), 
	(6,0),
	(6,2),
	(6,5),
	(6,6),
	(7,0),
	(7,3),
	(7,5),
	(7,6),
	(7,8),
	(7,9),
	(8,1),
	(8,3),
	(8,5),
	(8,8),
	(9,0),
	(9,1),
	(9,4),
	(9,6),
	(9,8),
	(9,9),
	(2,10),
	(2,11),
	(3,10),
	(3,11),
	(4,10),
	(5,11),
	(7,10),
	(8,10),
	(6,11),
	(8,11),
	(0,13),
	(1,12),
	(2,13),
	(4,12),
	(5,13),
	(7,12),
	(8,13),
	(10,12),
	(0,15),
	(1,15),
	(3,14),
	(4,13),
	(6,14),
	(7,13),
	(10,13),
	(0,17),
	(0,18),
	(2,15),
	(5,14),
	(1,16),
	(1,19),
	(3,16),
	(3,17),
	(3,18),
	(2,19),
	(5,15),
	(4,17),
	(5,17),
	(4,18),
	(4,19),
	(6,15),
	(7,15),
	(6,18),
	(7,18),
	(7,19),
	(8,15),
	(9,14),
	(8,16),
	(9,16),
	(8,18),
	(8,19),
	(10,11),
	(11,10),
	(10,14),
	(11,14),
	(10,18),
	(10,19),
	(11,18),
	(11,19),
	(12,11),
	(13,11),
	(14,10),
	(15,11),
	(16,11),
	(11,13),
	(12,12),
	(13,13),
	(14,13),
	(15,12),
	(16,12),
	(15,12),
	(12,15),
	(13,14),
	(14,14),
	(14,15),
	(16,14),
	(12,17),
	(13,17),
	(14,17),
	(15,17),
	(16,16),
	(12,19),
	(13,18),
	(14,19),
	(15,18),
	(16,19),
	(10,1),
	(11,0),
	(12,0),
	(13,1),
	(14,0),
	(15,1),
	(16,1),
	(11,2),
	(11,3),
	(12,2),
	(13,3),
	(15,2),
	(10,6),
	(11,6),
	(10,7),
	(11,7),
	(10,9),
	(11,8),
	(12,4),
	(13,5),
	(12,7),
	(13,7),
	(15,6),
	(15,7),
	(14,9),
	(15,9),
	(13,9),
	(12,8),
	(14,3),
	(16,3),
	(14,4),
	(15,4),
	(16,5),
	(16,7)} 
{
	\fill[llgray] \p rectangle +(1, 1);
	\fill[llgray,xshift=20cm] \p rectangle +(1, 1);
	\fill[llgray,yshift=20cm,xshift=3cm] \p rectangle +(1, 1);
	\fill[llgray,yshift=20cm,xshift=23cm] \p rectangle +(1, 1);
}
\foreach \p in { 
	(17,4),
	(17,5),
	(17,17),
	(17,14),
	(17,15),
	(17,11),
	(17,18),
	(17,0),
	(17,2),
	(17,8),
	(17,9),
	(19,10),
	(19,11),
	(19,14),
	(19,0),
	(18,1),
	(18,2),
	(19,4),
	(19,3),
	(18,18),
	(18,16),
	(19,17),
	(19,19),
	(18,5),
	(19,6),
	(18,14),
	(18,15),
	(19,12),
	(18,7),
	(19,8),
	(18,9)} 
{
	\fill[llgray] \p rectangle +(1, 1);
	\fill[llgray,xshift=20cm] \p rectangle +(1, 1);
	\fill[llgray,yshift=20cm,xshift=3cm] \p rectangle +(1, 1);
	\fill[llgray,yshift=20cm,xshift=-17cm] \p rectangle +(1, 1);
}
\end{scope}

%
\draw (0.5, 1.0) -- (6.5, 1.0) -- (6.5, 7.0) -- (0.5, 7.0) -- cycle;
\draw[xshift=7.2cm] (0.5, 1.0) -- (6.5, 1.0) -- (6.5, 7.0) -- (0.5, 7.0) -- cycle;
\draw[yshift=-7cm] (0.5, 1.0) -- (6.5, 1.0) -- (6.5, 7.0) -- (0.5, 7.0) -- cycle;
\draw[xshift=7.2cm, yshift=-7cm] (0.5, 1.0) -- (6.5, 1.0) -- (6.5, 7.0) -- (0.5, 7.0) -- cycle;
\end{tikzpicture} 
\end{center}
\caption{Illustrations of the potential $V$ in two space dimensions, in which the gray parts represent $\Ob$ (where $V(x)=\beta$) and the white parts $\Oa$ (where $V(x)=\alpha$). Periodic (upper left), realization of a random tensor product (upper right), domino block (lower left), and a fully random potential (lower right).} 
\label{fig_potentials}
\end{figure}
We are interested in the particular regime of $\beta\gg 1$, moderate $\alpha\ge0$, and small $\eps$. Furthermore, we assume that $\beta$ is not smaller than $\eps^{-2}$, which we will make more precise later on. We have in mind potentials where the distribution of $\Oa$ and $\Ob$ follows statistical laws. However, the actual statistics will become relevant only in Section~\ref{sect:gaps} in connection with the identification of spectral gaps.

In Section~\ref{sect_localization} we will exploit the operator formulation of \eqref{eq:EVP}. For this, we introduce the operators $\calA\colon \V\to\V^*$ and $\calI\colon \V\to\V^*$, defined by
\[
  \langle \calA u, v \rangle_{\V^*,\V} := a(u, v), \qquad
  \langle \calI u, v \rangle_{\V^*,\V} := (u, v)
\]  
for functions $u,v\in\V$. Note that $\calA$ denotes the weak form of the Schr\"odinger operator $\calH$ and that the eigenvalue problem \eqref{eq:EVP} is equivalent to $\calA u = E\, \calI u$. 

To shorten notation, we simply write $\Vert\cdot\Vert:=\sqrt{(\cdot,\cdot)}$ for the canonical $L^2$-norm on $D$. We also introduce the $V$-weighted $L^2$-norm,
\[
  \Vert v \Vert_V^2 
  := (Vv,v)
  = \int_D V(x)\,  |v(x)|^2\dx 
\]
as well as the energy norm,
\[
\Vvert v \Vvert^2
:= a(v, v)
= \Vert \nabla v\Vert^2 + \Vert v\Vert_V^2.
\]
Furthermore, we denote the norm on a sub-domain $\Oa$ or $\Ob$ by an additional subscript. 
%
\subsection{Geometry and cut-off function}\label{sec:evp:cutoff}
Before we can estimate the Schr\"odinger states from below, we need to introduce additional notation on the geometry of the potential. First, we define the set of cubes within the partition $\calT \slash_{\simZ}$, namely
\[
  \calQ 
  := \left\{ \hspace{2pt} \bigcup_{i=1}^{m} \hspace{2pt} [q_i]_{\simZ} \ \Big|\ Q = \bigcup_{i=1}^{m} q_i \subseteq \R^d \text{ is a (closed) cube and union of $m$ elements } q_i\in \calT \right\}.
\]
Note that this implies that all cubes in $\calQ$ have side length $\eps k$ for some natural number $1 \le k\le \eps^{-1}$. Second, we define the set of {\em maximal cubes} in $\Oa$ and $\Ob$, respectively, by 
\[
  \calQnu 
  := \big\{ Q\in \calQ\ |\ Q \subseteq \overline{\Onu} \text{ and there is no } Q' \in \calQ \text{ with } Q\subset Q'\subseteq \overline{\Onu} \big\},
\]
for $\nu=\alpha,\beta$. Note that $\bigcup_{Q\in\calQnu}Q = \overline{\Onu}$. Since $\calT \slash_{\simZ}$ is a quotient space, we can interpret $\calQ$ and $\calQnu$ as containing \quotes{cubes} that are extended over the periodicity interface. Such cubes are connected in $\R^d$, but can be disconnected as subsets of the unit cube $D$. Whenever one of the following arguments requires an element of $\calQ$ or $\calQnu$ to be connected, we shall interpret it as a subset of $\R^d$, where values outside of $D$ are obtained through periodicity. This will be done without further  mentioning. For brevity, we shall from now on abuse the notation and simply write $\calT$ instead of $\calT \slash_{\simZ}$. The same is done for $\calTa$ and $\calTb$.

The cubes in $\calQa$ somehow characterize the potential valleys, i.e., regions where the potential has the value $\alpha$. Finally, we define the {\em maximal width of a potential valley} in $\calT$ by 
\[
  L := \max_{Q\in \calQa} \frac{h_Q}{\eps} \in \N,
\]
where $h_Q$ denotes the side length of a cube $Q$. 
In the trivial setting $V\equiv \beta$, where $\calQa$ is empty, we set $L:=1$.  
With this characteristic value, we are able to bound the maximum number of overlaying maximal cubes in $\calQa$, namely
\[
  \kappa_\calT
  :=  \max_{q\in \calTa} \big| \{ Q\in\calQa\ |\ q\subseteq Q \} \big|
  \le L^d.
\]
\begin{remark}
In the periodic setup as in Figure~\ref{fig_potentials} (upper left) we have $L=1$ and $\kappa_\calT=1$. Note that the value of $L$ remains unchanged if $\alpha$ and $\beta$ are swapped in this example. 
\end{remark}
\begin{remark}
In the one-dimensional setting there are no overlapping maximal cubes, i.e., we have $\kappa_\calT = 1$ for $d=1$. 
\end{remark}
The exponential localization of the Green's function and eigenstates in oscillatory potentials requires sufficiently high amplitudes of the potential. This is quantified in the subsequent assumption depending on the oscillation length $\eps$. Loosely speaking we shall assume that the strength of potential peaks, characterized by the parameter $\beta$, is large compared to the inverse of the square of the oscillation length $\eps$. This assumption resembles the scaling in a typical physical setup \cite{NatureGreinerEtAl}. Considering for instance an optical lattice potential $V$ that is based on a laser diode operating at a wavelength $\eps=\lambda$, then the potential oscillates at a frequency of order $\eps^{-1}$. On the other hand, the maximum potential depth $\beta$ is measured in units of the recoil energy, which itself is proportional to $\lambda^{-2}=\eps^{-2}$. This is precisely the relation that we shall assume for $\eps$ and $\beta$. 
Assumptions on the strength of the potential valleys, characterized by the parameter $\alpha$, are not needed for the exponential decay of the Green's function. 
Thus, until Section~\ref{sect:gaps}	we only assume $0\le\alpha\le\beta$, which includes the particular case of the constant potential $V\equiv \beta$. 
\begin{assumption}
\label{ass_epsBeta}
The coefficient $\beta$ is large in the sense that it satisfies the estimate $\beta \gtrsim \eps^{-2}$ and we assume that $\calTb\neq\emptyset$. 
\end{assumption}
To derive energy estimates we introduce a cut-off function $\eta\colon D \to [0,1]$. This function is assumed to be smooth, constant $1$ in $\Oa$, and vanishes in each cube of side length $\eps/2$, which is centered in elements of $\calTb$, cf.~Figure~\ref{fig_cutoff}. In other words, $\eta$ hits zero in each $\beta$-peak of the potential $V$. Further, we assume that $\Vert \nabla\eta\Vert_{L^\infty(D)} \lesssim \eps^{-1}$. We emphasize that this implies some kind of Friedrichs inequality. 
%
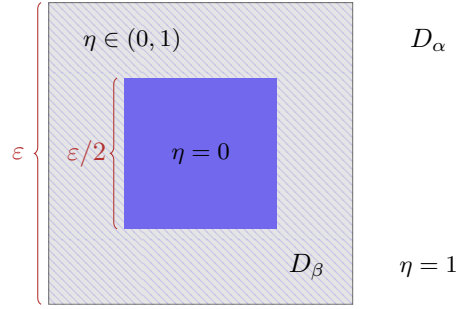
\begin{figure}
\begin{center}
	\begin{tikzpicture}[scale=1.0]
	\fill[gray, opacity=0.2] (0,0) -- (4,0) -- (4,4) -- (0,4) -- cycle;
	\draw[thin, pattern=north west lines, pattern color=myBlue, opacity=0.5] (0,0) -- (4,0) -- (4,4) -- (0,4) -- cycle; 
	\fill[myBlue] (1,1) -- (3,1) -- (3,3) -- (1,3) -- cycle;
	\node at (3.4, 0.5) {$\Ob$};	
	\node at (5.0, 3.5) {$\Oa$};	
	\draw[myRed,decorate,decoration={brace}] (-0.1, 0) -- (-0.1, 4);
	\node[myRed] at (-0.4, 2) {$\eps$};	
	\draw[myRed,decorate,decoration={brace}] (0.9, 1) -- (0.9, 3);
	\node[myRed] at (0.5, 2) {$\eps / 2$};	
	\node at (5.0, 0.5) {\small $\eta = 1$};	
	\node at (1.1, 3.5) {\small $\eta \in (0,1)$};
	\node at (2.0, 2.0) {\small $\eta = 0$};
	\end{tikzpicture}
\end{center}
\caption{Illustration of the cut-off function $\eta$, which is constant $1$ in~$\Oa$ and vanishes in the interior of each element of $\calTb$. }
\label{fig_cutoff}
\end{figure}
\begin{lemma}
\label{lem_eta_Friedrich}
Assume that $\calTb\neq\emptyset$. 
Consider a function $v\in \V$ and the cut-off function $\eta$ introduced above. The product $\eta v$ then satisfies the estimate 
$$
\| \eta v \|_{L^2(D)} \leq
\cl  \hspace{1pt} \eps L \hspace{2pt} \|\nabla (\eta v) \|_{L^2(D)}
$$
with some generic constant $\cl\lesssim \kappa_\calT L^{d}\le L^{2d}$.
\end{lemma}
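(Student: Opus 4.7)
The plan is to split the domain into pieces on which a local Poincaré–Friedrichs inequality applies, because $\eta v$ has a sizeable zero set inside every $\beta$-element. Write
\[
\|\eta v\|_{L^2(D)}^2 = \|v\|_{L^2(\Oa)}^2 + \|\eta v\|_{L^2(\Ob)}^2,
\]
using that $\eta \equiv 1$ on $\Oa$. For the $\Ob$-part I would loop over $q\in\calTb$: on each such $q$, the function $\eta v$ vanishes identically on the concentric sub-cube of side $\eps/2$, so a standard Poincaré–Friedrichs inequality on a cube of side $\eps$ with a fixed-proportion zero set yields $\|\eta v\|_{L^2(q)}\lesssim \eps\,\|\nabla(\eta v)\|_{L^2(q)}$ with a dimensional constant independent of $L$; summing over $q$ produces $\|\eta v\|_{L^2(\Ob)}^2\lesssim \eps^2 \|\nabla(\eta v)\|_{L^2(\Ob)}^2$, which is already better than what is claimed.

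The main work is in the $\Oa$-part, and the key technical tool I would establish first is the following scaled Poincaré–Friedrichs estimate: if $\Omega$ is a cube of side $H$ containing a sub-cube $B$ of side $r$, and $u\in H^1(\Omega)$ vanishes on $B$, then
\[
\|u\|_{L^2(\Omega)}^2 \;\lesssim\; \frac{|\Omega|}{|B|}\,H^2\,\|\nabla u\|_{L^2(\Omega)}^2.
\]
The short proof writes $u = (u-\bar u_\Omega) + \bar u_\Omega$, bounds the first piece by standard Poincaré on a cube, and controls $\bar u_\Omega$ via $|B|\,\bar u_\Omega^2 = \int_B (u-\bar u_\Omega)^2 \leq \|u-\bar u_\Omega\|_{L^2(\Omega)}^2$, which uses $\bar u_B = 0$. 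This is the step I regard as the heart of the matter, because the constant has to scale correctly in the ratio $H/r$ in order to recover the claimed $L$-dependence.

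The final step is the covering and overlap argument. For each maximal cube $Q\in \calQa$ of side $\eps k_Q \leq \eps L$, maximality together with $\calTb\neq\emptyset$ guarantees the existence of an adjacent element $q_Q\in\calTb$; set $\hat Q := Q\cup q_Q$, a region of diameter $\lesssim \eps L$ containing the $\eps/2$ zero sub-cube of $\eta v$ inside $q_Q$. Applying the inequality above with $H\lesssim \eps L$ and $r=\eps/2$ gives $\|\eta v\|_{L^2(\hat Q)}^2 \lesssim L^{d}(\eps L)^2\,\|\nabla(\eta v)\|_{L^2(\hat Q)}^2$. Since $\{Q\}_{Q\in\calQa}$ covers $\overline{\Oa}$ we have $\|v\|_{L^2(\Oa)}^2\leq \sum_Q \|\eta v\|_{L^2(\hat Q)}^2$, and summing this local bound requires counting how many $\hat Q$ cover a given point: any element of $\calTa$ lies in at most $\kappa_\calT$ cubes $Q\in\calQa$ by definition, while any element $q\in\calTb$ has at most $2d$ face-neighbours each of which belongs to at most $\kappa_\calT$ maximal cubes, so the overlap of $\{\hat Q\}$ is $\lesssim \kappa_\calT$. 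Combining everything yields $\|\eta v\|_{L^2(D)}^2 \lesssim \kappa_\calT L^{d}\,(\eps L)^2\,\|\nabla(\eta v)\|_{L^2(D)}^2$, which gives the stated bound (indeed slightly better, since the square root of this constant appears in $\cl$). The only real obstacle is the scaled Friedrichs inequality; everything else is geometric bookkeeping.
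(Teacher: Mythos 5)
Your analytic core -- the scaled Friedrichs inequality on a cube $\Omega$ with a sub-cube $B$ on which the function vanishes, proved by the mean-value trick -- is correct, and your treatment of the $\Ob$-part is fine. The genuine gap is in the covering step for $\Oa$. First, maximality of $Q\in\calQa$ does \emph{not} provide a face-adjacent $\beta$-element: it only guarantees a $\beta$-element touching $Q$, possibly merely at a corner or an edge. (In $d=2$, take a single $\alpha$-element whose four face-neighbours are $\alpha$ and whose four corner-neighbours are $\beta$; every larger mesh cube containing it contains a $\beta$-corner, so this element is a maximal cube, yet all its face-neighbours lie in $\Oa$.) If $q_Q$ touches $Q$ only in a lower-dimensional set, the interior of $\hat Q=Q\cup q_Q$ is disconnected and no Friedrichs inequality of the required form holds on $\hat Q$ at all: the function equal to $1$ on $Q$ and $0$ on $q_Q$ has vanishing gradient in the interior of $\hat Q$, and such data is realized by $\eta v$ (choose $v\in\V$ equal to $1$ on $Q$ and $0$ near $q_Q$, transitioning outside $\hat Q$; recall $\eta\equiv 1$ on $\Oa$). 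Second, even when a face-adjacent $q_Q$ exists, your key lemma was proved for a cube, whereas $\hat Q$ is a cube with a small bump; the mean-value argument then needs a Poincar\'e inequality on this non-convex union with constant $\lesssim \eps L$, i.e.\ a comparison of the mean values over $Q$ and over $q_Q$ through the shared facet of area $\eps^{d-1}$, which is exactly the kind of step where extra geometric factors can enter and which you have not supplied.

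Both issues disappear if you apply your key lemma on honest cubes: for each $Q\in\calQa$ pick a mesh-aligned cube $\hat Q\supseteq Q$ of side length $\eps(L+1)$. By the definition of $L$, such a cube cannot be contained in $\overline{\Oa}$ (it would then lie in a maximal $\alpha$-cube of width larger than $\eps L$), so it contains a full element of $\calTb$ and hence the concentric $\eps/2$-cube on which $\eta v$ vanishes; your lemma with $H=\eps(L+1)$ and $r=\eps/2$ gives $\|\eta v\|_{L^2(\hat Q)}^2\lesssim L^d(\eps L)^2\|\nabla(\eta v)\|_{L^2(\hat Q)}^2$. The price is that the overlap of these enlarged cubes is no longer $\lesssim\kappa_\calT$ but only $\lesssim L^d\kappa_\calT$ (all maximal cubes meeting an $\calO(\eps L)$-neighbourhood of a point), which still yields $\cl\lesssim\sqrt{\kappa_\calT}\,L^d\le\kappa_\calT L^d$, so the stated bound is recovered. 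Note that this repaired argument reproduces the geometry of the paper's proof, which works on cubes of side $\eps(2L+1)$ centred at $\beta$-elements; the paper obtains the local estimate from the averaged-Taylor-polynomial (Riesz-kernel) representation of Brenner--Scott, and your mean-value argument is a legitimately more elementary substitute for that analytic step -- but only once the covering is done with genuine cubes rather than with $Q\cup q_Q$.
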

The proof of the lemma is given in Appendix \ref{appendix-a}, where we also present a refined version of this Friedrichs-type inequality. 
\begin{example}
In the extreme case of only a single $\beta$-element in $\calT$, we have $L\approx \eps^{-1}$ and the result degenerates in the sense that we loose the factor~$\eps$ in the estimate. 	
On the other hand, if the potential satisfies $V\equiv \beta$, then we get the classical Friedrichs inequality~$\| \eta v \|_{L^2(D)} \le \eps\, \|\nabla (\eta v) \|_{L^2(D)}$. 
In the case of a random potential with correlation length of the order $\varepsilon$, one obtains with high probability maximal valleys of size $L\approx\log(1/\eps)$. Thus, we get a Friedrichs-type inequality, which contains the factor~$\eps$ but also logarithmic terms. 
\end{example}
%
\subsection{Lower bound on the energy}\label{sec:evp:bound}
With the estimate of Lemma~\ref{lem_eta_Friedrich}, we are able to give a lower bound for the spectrum of $\calH$. 
For this, we will bound the scaled energy of a function $v\in\V\setminus\{ 0\}$, 
\begin{align}
\label{def-Ev}
\E(v) 
:= \frac{a(v,v)}{\Vert v\Vert^2}
= \frac{\Vvert v\Vvert^2}{\Vert v\Vert^2}
\end{align}
from below. In the assumed regime $\beta\gtrsim\varepsilon^{-2}$ this lower bound is in the order of~$\eps^{-2}$. 
In the following, we will no longer mention the silent convention that $E(v)$ is only defined for $v\not=0$.
\begin{lemma}
Under Assumption~\ref{ass_epsBeta} we have 
\[
E^1 
:= \min_{v\in \V} \E(v)
\gtrsim \frac{1}{\cl^2 (\eps L)^2}.
\]
\end{lemma}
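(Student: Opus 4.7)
The plan is to produce a lower bound on $\|v\|^2$ in terms of the energy $\Vvert v\Vvert^2 = \|\nabla v\|^2 + \|v\|_V^2$ for arbitrary $v\in\V$, from which the lower bound on $E(v)$ follows immediately by the definition \eqref{def-Ev}. The key tool is the cut-off function $\eta$ of Section~\ref{sec:evp:cutoff} together with the Friedrichs-type inequality of Lemma~\ref{lem_eta_Friedrich}.

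First I would split $v = \eta v + (1-\eta) v$ and estimate the two pieces separately. Since $1-\eta$ vanishes on $\Oa$ and is bounded by $1$, the second piece is supported in $\Ob$, where $V\equiv\beta$, so
\[
  \|(1-\eta)v\|^2 \le \int_{\Ob} |v|^2\dx = \frac{1}{\beta}\|v\|_{V,\Ob}^2 \le \frac{1}{\beta}\|v\|_V^2,
\]
which under Assumption~\ref{ass_epsBeta} is at most a constant times $\eps^2 \|v\|_V^2$. For the first piece, Lemma~\ref{lem_eta_Friedrich} gives $\|\eta v\|^2 \le \cl^2(\eps L)^2 \|\nabla(\eta v)\|^2$, so it remains to bound $\|\nabla(\eta v)\|$ by the energy norm.

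By the product rule and the triangle inequality,
\[
  \|\nabla(\eta v)\|^2 \lesssim \|\eta \nabla v\|^2 + \|v\nabla\eta\|^2 \le \|\nabla v\|^2 + \|\nabla\eta\|_{L^\infty(D)}^2\, \|v\|_{\supp\nabla\eta}^2.
\]
The decisive observation is that $\nabla\eta$ is supported inside $\Ob$ (because $\eta\equiv 1$ on $\Oa$), and $\|\nabla\eta\|_{L^\infty(D)}\lesssim \eps^{-1}$. Hence the second summand is bounded by $\eps^{-2}\int_{\Ob}|v|^2\dx = \eps^{-2}\beta^{-1}\|v\|_{V,\Ob}^2 \lesssim \|v\|_V^2$, again by Assumption~\ref{ass_epsBeta}. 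Combining these, $\|\nabla(\eta v)\|^2 \lesssim \Vvert v\Vvert^2$.

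Putting everything together with $\|v\|^2 \le 2\|\eta v\|^2 + 2\|(1-\eta)v\|^2$, and noting that $\cl\ge 1$ and $L\ge 1$ imply $\eps^2 \le \cl^2(\eps L)^2$, we obtain
\[
  \|v\|^2 \lesssim \cl^2(\eps L)^2 \Vvert v\Vvert^2,
\]
which rearranges to the claimed bound on $E(v)$ and, taking the infimum, on $E^1$. The only subtle point is the interplay between $\|\nabla\eta\|_{L^\infty}\lesssim \eps^{-1}$ and the assumed scaling $\beta\gtrsim\eps^{-2}$: it is precisely this relation that allows the gradient-of-$\eta$ term to be absorbed into the potential part of the energy, so Assumption~\ref{ass_epsBeta} is used in an essential way and not merely for convenience.
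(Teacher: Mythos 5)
Your proposal is correct and follows essentially the same route as the paper: both arguments reduce the bound to Lemma~\ref{lem_eta_Friedrich} applied to $\eta v$, then control $\Vert\nabla(\eta v)\Vert$ via the product rule using $\supp\nabla\eta\subset\Ob$, $\Vert\nabla\eta\Vert_{L^\infty}\lesssim\eps^{-1}$ and $\beta\gtrsim\eps^{-2}$, and absorb the remaining $\Ob$-contribution into $\Vert v\Vert_V^2$. The only (cosmetic) difference is that you split $v=\eta v+(1-\eta)v$, whereas the paper splits $\Vert v\Vert^2=\Vert v\Vert_{\Oa}^2+\beta^{-1}\Vert v\Vert_{V,\Ob}^2$ and uses $\Vert v\Vert_{\Oa}\le\Vert\eta v\Vert$ directly.
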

\begin{proof}
For an arbitrary function $v\in \V$ we obtain 
\begin{align*}
  \Vert v \Vert^2
  &= \Vert v \Vert_\Oa^2 + \frac{1}{\beta}\, \Vert v \Vert_{V,\Ob}^2  \\
  &\le \Vert \eta v \Vert^2 + \frac{1}{\beta}\, \Vert v \Vert_{V,\Ob}^2 \\
  &\lesssim 
  \cl^2 \hspace{1pt} (\eps L)^2\, \Vert \nabla(\eta v) \Vert^2 + \frac{1}{\beta}\, \Vert v \Vert_{V, \Ob}^2 \\
  &\lesssim \cl^2 \hspace{1pt} (\eps L)^2\,
  \Big( \frac{1}{\eps^2\beta}\, \Vert v \Vert_{V,\Ob}^2  + \Vert \nabla v \Vert^2 \Big) + \frac{1}{\beta}\, \Vert v \Vert_{V, \Ob}^2 \\
  &= \cl^2 \hspace{1pt} (\eps L)^2\, \Vert \nabla v \Vert^2 + 
  \big( 1 + \cl^2 L^{2}  \big)  \frac{1}{\beta}\, \Vert v \Vert_{V, \Ob}^2.
\end{align*}
Thus, Assumption~\ref{ass_epsBeta} yields the estimate 
\begin{align}
\label{estimate_L2norm}
  \Vert v \Vert^2
  \lesssim \cl^2 \hspace{1pt} (\eps L)^2\, \Vvert v \Vvert^2.
\end{align}
This shows that the energy is bounded from below by
\begin{align*}
\E(v) 
= \frac{\Vvert v\Vvert^2}{\Vert v\Vert^2}
\gtrsim \frac{1}{\cl^2 (\eps L)^2}.
\end{align*}
Using the characterization of eigenvalues by the Rayleigh quotient, we directly obtain the stated lower bound for the ground state of the Schr\"odinger equation.
\end{proof}
\begin{remark}
\label{rem_sharp}
Estimate~\eqref{estimate_L2norm} is sharp with respect to the maximum width of potential valleys $\eps L$, i.e., there exists a non-trivial function $u\in \V$ with $\Vert u \Vert^2 \gtrsim (\eps L)^2 \, \Vvert u \Vvert^2$. To see this we consider the first eigenfunction $u\in H^1_0(Q_L)$ of the shifted Laplace eigenvalue problem 
\[ 
  \int_{Q_L} \nabla u(x) \cdot \nabla v(x) \dx 
  = \big(\lambda-\alpha\big) \int_{Q_L} u(x)\, v(x) \dx 
\]	
with test functions $v\in H^1_0(Q_L)$ on a maximal $\alpha$-cube $Q_L \in \calQa$ with side length $\eps L$. According to \cite[Ch.~10.4]{Str08}, the first eigenvalue satisfies $\lambda-\alpha = d \hspace{2pt} \pi^2 / (\eps L)^2$ such that~$u$, extended by zero to a function in $\V$, satisfies 
\[
  \Vert u\Vert^2
  = \Vert u\Vert_{Q_L}^2
  = \frac{1}{\lambda}\, \Vvert u\Vvert_{Q_L}^2
  = \frac{1}{\lambda}\, \Vvert u\Vvert^2
  \gtrsim \frac{(\eps L)^2}{d\pi^2} \Vvert u\Vvert^2,
\]
provided that~$\alpha$ is of moderate size. 
Similar arguments will be used in Section~\ref{sect:gaps} where we prove the existence of spectral gaps. 
\end{remark}
%
%
\section{Exponential Decay of the Green's Function}\label{sec:precond}
This section shows that the Green's function associated with the Schr\"odinger operator decays exponentially relative to the parameter $\varepsilon$ that reflects the characteristic length of oscillation of the potential. The proof is strongly inspired by a recent innovative proof of the exponential decay of the corrector Green's function in the context of numerical homogenization for arbitrarily rough diffusion coefficients by Kornhuber and Yserentant \cite{KorY16}, see also \cite{KorPY18} and earlier work \cite{MalP14, HenP13, Pet16}. The idea is to show that the Schr\"odinger operator can be preconditioned by an operator that is local with respect to a decomposition of the domain into cubic sub-domains with diameter~$2\eps$. 
The spectrum of the arising preconditioned operator is proved to be clustered around~$1$ so that simple iterative solvers approximate the action of the inverse Schr\"odinger operator applied to some compactly supported function (or the point evaluation functional) up to an accuracy $\tol$ in only~$\mathcal{O}(\plog(1/\eps) \log(1/\tol))$ steps. 

The locality of the preconditioned operator ensures that the diameter of the support of the approximation is of the same order. This means that the Green's function associated with $\calH$ decays exponentially in units of~$\eps L$. The result is independent of the degree of disorder of the potential and remains valid in the perfectly periodic case.
%
\subsection{Overlapping domain decomposition on the $\varepsilon$-scale}
We introduce an overlapping decomposition of $D$, which we will later use to define the local preconditioner. 
For this, we consider the nodes corresponding to the mesh $\calT$, which we denote by~$\calN$. 
For each node $z\in \calN$ let $\lambda_z$ be the standard $Q_1$-basis function \cite[Sect.~3.5]{BreS08}, i.e., $\lambda_z$ is a piecewise polynomial of partial degree one with $\lambda_z(z)=1$ and $\lambda_z(w)=0$ for any other node $w\in\calN\setminus\{z\}$. Again one has to take care of the assumed periodicity of the domain and the resulting identification of the boundary nodes. All together, this gives a set of functions, which forms a partition of unity on $D$, i.e., 
\begin{align}
\label{eqn_partunity}
\sum\nolimits_{z\in \calN} \lambda_z \equiv 1.
\end{align}
The patches of the $Q_1$-hat functions define small subdomains 
\[
  D_z := \supp \lambda_z
\]
for each $z\in\calN$. 
By definition, $D_z$ are cubes of side length~$2\eps$ and each $T\in\calT$ is contained in $2^d$ of these subdomains.  
For an illustration of such a patch we refer to Figure~\ref{fig_localPatch}. 
\begin{figure}
\begin{center}
\begin{tikzpicture}[scale=0.9]
	\foreach \point in {(0,4), 
						(1,5), (1,6), 
					    (2,4), (2,6), 
					    (4,4), (4,5),
					    (5,5), (5,6), 
					    (6,4), (6,5), 
					    (7,4), (7,6)} {
		\fill[llgray] \point -- +(0, 1) -- +(1, 1) -- +(1, 0);
	}
	\foreach \x in {1, 5, 7} {		
		\fill[llgray] (\x, 3.5) -- (\x, 4) -- (\x+1, 4) -- (\x+1, 3.5) -- cycle;
	}
	\foreach \x in {0, 1, 2, 5, 6} {		
		\fill[llgray] (\x, 7.5) -- (\x, 7) -- (\x+1, 7) -- (\x+1, 7.5) -- cycle;
	}	
	\foreach \y in {5, 6} {		
		\fill[llgray] (0, \y) -- (0, \y+1) -- (-0.5, \y+1) -- (-0.5, \y) -- cycle;
	}
	\fill[pattern=north west lines, pattern color=red, opacity=0.5] (1, 7) -- (3, 7) -- (3, 5) -- (1, 5) -- cycle;
	\fill[pattern=north east lines, pattern color=blue, opacity=0.4] (2, 6) -- (4, 6) -- (4, 4) -- (2, 4) -- cycle;
	\fill[pattern=north east lines, pattern color=green!70!black, opacity=0.7] (6, 7.49) -- (8, 7.49) -- (8, 6) -- (6, 6) -- cycle;
	%
	\draw[ultra thick] (0, 3.5) -- (0, 7) -- (8.5, 7);
	\foreach \y in {4, 5, 6, 7} {
		\draw[thin] (-0.5, \y) -- (8.5, \y);
	}
	\foreach \x in {0, 1, 2, 3, 4, 5, 6, 7, 8} {
		\draw[thin] (\x, 3.5) -- (\x, 7.5);
	}
	\fill[red] (2,6) circle (0.3em);	
	\fill[blue] (3,5) circle (0.3em);	
	\fill[green!65!black] (7,7) circle (0.3em);	
\end{tikzpicture} 
\end{center}
\caption{Illustration of local (overlapping) patches $D_i$ for three exemplary nodes. Gray squares are regions with $V(x)= \beta$.
}
\label{fig_localPatch}
\end{figure}
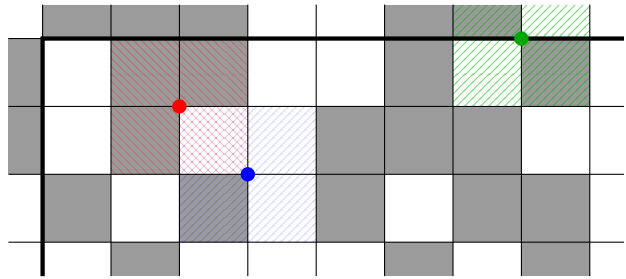

Based on the patches $D_z$, we define local $H^1$-spaces by
\[
  \V_z 
  := H^1_0(D_z)
  = \big\{ v\in H^1(D_z)\ |\ v=0 \text{ on } \partial D_z  \big\}.
\]
Elements of $\V_z$ are considered to be extended by zero outside of $D_z$. Moreover, recall that we interpret $D_z$ as a subset of $\R^d$. This implies that a function $v\in H^1_0(D_z)$ respects the periodicity over opposite edges (or faces for $d=3$) of $\partial D$ and does not necessarily fulfill $v=0$ on $\partial D$. 
We define the corresponding $a$-orthogonal projection $\Pz\colon \V\to\V_z$ by the variational problem
\[
  a(\Pz u, v) = a(u,v)
\]
for test functions $v\in \V_z$. Note that the trivial embedding $\V_z\hook\V$ allows to consider $\Pz$ as a mapping from $\V$ to $\V$. We may also define $\tPz\colon \V^*\to \V_z$ by
\[
  a(\tPz F, v) = \langle F, v\rangle_{\V^*,\V} 
\]
for test functions $v\in \V_z$. Letting $\calA\colon \V\to\V^*$ denote the operator representation of $a(\cdot\,,\cdot)$, we have the relation $\Pz = \tPz \calA$.  
\subsection{Optimal $\varepsilon$-local preconditioner}
Combining all local projections, we obtain the operator 
\begin{align}
\label{def_calP}
  \calP 
  := \sum\nolimits_{z\in\calN} \Pz.
\end{align}
This defines a mapping $\calP\colon \V \to\V$ if we assume that the canonical embeddings $\V_z\hook \V$ are exploited. It is easy to see that this operator is continuous. Accordingly, we define $\tilde \calP\colon \V^*\to\V$ by~$\tilde \calP:= \sum\nolimits_{z\in\calN} \tPz = \calP \calA^{-1}$. 
We emphasize that the operator $\calP$ is quasi-local with respect to the $\eps$-mesh $\calT$, since \quotes{information} can only propagate distances of order $\eps$ each time that $\calP$ is applied. 

The remaining part of this section aims to show that $\tilde\calP$ defines a good approximation of $\calA^{-1}$ and thus, serves well as a preconditioner within iterative solvers for linear equations and the Schr\"odinger eigenvalue problem. Following the abstract theory for additive subspace correction
or additive Schwarz methods for operator equations \cite{KorY16} (see also \cite{Xu92,yserentant_1993} for the matrix case) we need to verify that the energy norm of a function $u\in \V$ can be bounded in terms of the sum of local contributions from $\V_Q$ and $\V_z$. 
\begin{lemma}
\label{lem_K2}
For every decomposition $u = \sum_{z\in\calN} u_z$ with $u_z\in \V_z$ we have 
\[
  \Vvert u \Vvert^2 
  \le K_2\, \sum_{z\in\calN} \Vvert u_z \Vvert^2 
\]
with $K_2 = 2^d$.
\end{lemma}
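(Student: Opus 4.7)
The plan is to exploit the bounded-overlap structure of the cover $\{D_z\}_{z\in\calN}$ through a pointwise Cauchy--Schwarz argument. The crucial geometric input is already built into the construction of the patches: each mesh element $T\in\calT$ has exactly $2^d$ vertices, so $T$ is contained in exactly $2^d$ of the subdomains $D_z$, namely those centered at these vertices. Since any $u_z\in \V_z = H^1_0(D_z)$ is extended by zero outside $D_z$, both $u_z$ and its weak gradient $\nabla u_z$ vanish almost everywhere outside $D_z$. Hence, for almost every $x\in D$, at most $2^d$ of the summands $u_z(x)$ and $\nabla u_z(x)$ are nonzero.

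The main step is then to apply the elementary inequality $(\sum_{i=1}^m a_i)^2 \le m\sum_{i=1}^m a_i^2$ pointwise with $m\le 2^d$. This yields
\[
  |u(x)|^2 = \Big|\sum_{z\in\calN} u_z(x)\Big|^2 \le 2^d \sum_{z\in\calN} |u_z(x)|^2,
\]
and analogously $|\nabla u(x)|^2 \le 2^d \sum_{z\in\calN} |\nabla u_z(x)|^2$. Multiplying the first inequality by the nonnegative potential $V(x)$ and combining the two estimates, we obtain a pointwise bound on the energy density
\[
  |\nabla u(x)|^2 + V(x)|u(x)|^2 \le 2^d \sum_{z\in\calN} \big( |\nabla u_z(x)|^2 + V(x)|u_z(x)|^2\big).
\]
Integrating over $D$ and invoking the definition $\Vvert v\Vvert^2 = \Vert\nabla v\Vert^2 + \Vert v\Vert_V^2$ gives $\Vvert u\Vvert^2 \le 2^d \sum_{z\in\calN}\Vvert u_z\Vvert^2$, i.e.\ the claim with $K_2 = 2^d$.

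There is essentially no obstacle here: the argument uses neither Assumption~\ref{ass_epsBeta} nor the Friedrichs-type inequality of Lemma~\ref{lem_eta_Friedrich}, nor any statistical properties of the potential; the constant depends solely on the combinatorial overlap number of the cover. An equivalent route would be to partition $\calN$ into $2^d$ color classes according to the parity vector of the integer coordinates of each node and use the triangle inequality, but handling periodicity there requires a little extra care, whereas the pointwise Cauchy--Schwarz approach works uniformly.
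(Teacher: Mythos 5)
Your argument is correct and is essentially the paper's proof: both rest on the fact that each element $T\in\calT$ (equivalently, almost every point $x$) meets the supports of at most $2^d$ of the $u_z$, combined with the discrete Cauchy--Schwarz inequality $\bigl(\sum_{i=1}^m a_i\bigr)^2\le m\sum_{i=1}^m a_i^2$; the paper applies this to the local energy norms $\Vvert\cdot\Vvert_T$ and sums over elements, while you apply it to the energy density pointwise and integrate, which is an inessential variation.
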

\begin{proof}
We use the local supports of $u_z$ and the fact that for $T\in\calT$ there are at most $2^d$ functions $u_z$ with support on $T$. Thus, we can estimate on a single element,  
\[
  \Vvert u \Vvert_{T}^2 
  = \bigVvert \sum\nolimits_{z\in\calN} u_z \bigVvert_{T}^2
  \le 2^d \sum\nolimits_{z\in\calN} \Vvert u_z \Vvert_T^2.
\]
A summation over all $T$ yields the assertion.
\end{proof}
We now need the reverse estimate for one specific decomposition of $u\in\V$ in the local spaces~$\V_z$. 
Therefore, we define the local functions $u_z := \lamz u$ for all $z\in\calN$. 
From~\eqref{eqn_partunity} we know that $\sum_{z\in\calN} u_z = u$. 
For this particular decomposition we can prove the following lemma. 
\begin{lemma}
\label{lem_K1}
Given Assumption~\ref{ass_epsBeta} and the decomposition of $u\in\V$ as above, it holds that
\[
  \sum_{z\in\calN} \Vvert u_z \Vvert^2 
  \lesssim K_1\, \Vvert u \Vvert^2 
\]
with constant $K_1 := 2^{d+1} (1 + \cl^2 L^2 \big)$. 
\end{lemma}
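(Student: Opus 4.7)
The plan is to expand the energy norm $\Vvert \lamz u\Vvert^2$ via the product rule and then to absorb the resulting terms into $\Vvert u\Vvert^2$ by two distinct mechanisms: one based on the partition-of-unity property combined with finite overlap, and a second one based on the Friedrichs-type estimate \eqref{estimate_L2norm} derived in Section~\ref{sec:evp:bound}.

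First, I would split $\Vvert \lamz u\Vvert^2 = \Vert\nabla(\lamz u)\Vert^2 + \Vert \lamz u\Vert_V^2$ and apply the product rule $\nabla(\lamz u) = u\,\nabla\lamz + \lamz\,\nabla u$, so that
\[
  \sum_{z\in\calN}\Vvert \lamz u\Vvert^2
  \le 2\sum_{z\in\calN}\Vert u\,\nabla\lamz\Vert^2
  + 2\sum_{z\in\calN}\Vert \lamz\,\nabla u\Vert^2
  + \sum_{z\in\calN}\Vert \lamz u\Vert_V^2.
\]
For the last two sums, the idea is to use $0\le\lamz\le 1$ together with the partition-of-unity property~\eqref{eqn_partunity}, which gives $\sum_{z\in\calN}\lamz^2\le\sum_{z\in\calN}\lamz\equiv 1$ pointwise. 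Hence
\[
  \sum_{z\in\calN}\Vert \lamz\,\nabla u\Vert^2 \le \Vert\nabla u\Vert^2,
  \qquad
  \sum_{z\in\calN}\Vert \lamz u\Vert_V^2 \le \Vert u\Vert_V^2,
\]
each of which is bounded by $\Vvert u\Vvert^2$.

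The delicate term is the first one, which carries a factor $\eps^{-2}$. Using the standard bound $\Vert\nabla\lamz\Vert_{L^\infty(D)}\lesssim\eps^{-1}$ and the fact that every point of $D$ belongs to at most $2^d$ patches $D_z$, I would estimate
\[
  \sum_{z\in\calN}\Vert u\,\nabla\lamz\Vert^2
  \lesssim \eps^{-2}\sum_{z\in\calN}\Vert u\Vert_{D_z}^2
  \le 2^d\,\eps^{-2}\Vert u\Vert^2.
\]
Here comes the essential input: the Friedrichs-type inequality~\eqref{estimate_L2norm} gives $\Vert u\Vert^2\lesssim \cl^2(\eps L)^2\Vvert u\Vvert^2$, so that the problematic $\eps^{-2}\Vert u\Vert^2$ can be absorbed into $\cl^2 L^2\Vvert u\Vvert^2$. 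Combining this with the two easy bounds from the previous step yields
\[
  \sum_{z\in\calN}\Vvert \lamz u\Vvert^2
  \lesssim 2^{d+1}\cl^2 L^2\Vvert u\Vvert^2 + 2\Vvert u\Vvert^2
  \le 2^{d+1}\bigl(1+\cl^2 L^2\bigr)\Vvert u\Vvert^2,
\]
which is precisely the claimed bound with $K_1=2^{d+1}(1+\cl^2 L^2)$.

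The main obstacle is clearly the $u\,\nabla\lamz$ contribution: a naive estimate produces an unavoidable $\eps^{-2}$ factor reflecting the oscillation scale of the partition of unity, and this can only be controlled by invoking Assumption~\ref{ass_epsBeta} through the weighted Friedrichs inequality~\eqref{estimate_L2norm}. This is why the potential-dependent constant $\cl^2 L^2$ enters $K_1$, whereas the two partition-of-unity terms contribute merely the geometric factor $2$. In contrast, Lemma~\ref{lem_K2} did not require any such inequality because the decomposition there was arbitrary and no derivative had to be shifted onto a cut-off weight.
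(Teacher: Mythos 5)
Your proposal is correct and follows essentially the same route as the paper: expand $\Vvert \lambda_z u\Vvert^2$ via the product rule, use the bound $\Vert\nabla\lambda_z\Vert_{L^\infty}\lesssim\eps^{-1}$ together with the finite overlap of the patches, and absorb the resulting $\eps^{-2}\Vert u\Vert^2$ term through the weighted Friedrichs estimate \eqref{estimate_L2norm}, which is exactly where Assumption~\ref{ass_epsBeta} enters. Your use of $\sum_z\lambda_z^2\le\sum_z\lambda_z=1$ for the $\lambda_z\nabla u$ and $\lambda_z u$ terms is a minor sharpening of the paper's $2^d$ overlap count, but it does not change the argument.
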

\begin{proof}
With the estimate~\eqref{estimate_L2norm} we directly obtain 
\begin{align*}
  \sum_{z\in\calN} \Vvert u_z \Vvert^2
  &= \sum_{z\in\calN}\Big(\Vert \nabla(\lamz u) \Vert^2 + \Vert \lamz u \Vert_V^2 \Big)\\
  &\le \sum_{z\in\calN} \Big( 2\, \Vert \nabla u \Vert_{D_z}^2 + 2\eps^{-2}\Vert u \Vert_{D_z}^2 + \Vert u \Vert_{V,D_z}^2 \Big)\\
  &\le 2^{d+1} \Vvert u \Vvert^2 + 2^{d+1} \eps^{-2} \Vert u \Vert^2\\  
  &\lesssim 2^{d+1} (1 + \cl^2 L^2) \Vvert u \Vvert^2. 
\end{align*}
Note that we have again used the fact that the maximal number of overlapping patches is $2^d$.
\end{proof}
\begin{remark}
In the periodic setting with $L=1$ one can show that $K_1 \lesssim 2^{d+2}$, i.e., $K_1$ is independent of $\eps$.
\end{remark}
Note that we have used Assumption~\ref{ass_epsBeta} in the previous lemma. We emphasize that such a condition is necessary, since the general case would lead to a constant $K_1 \approx \eps^{-2}$ in the worst case.  
The subsequent result is a direct consequence of the previous Lemmata~\ref{lem_K2} and \ref{lem_K1}, cf.~\cite[Lem.~3.1 and Th.~3.2]{KorY16}.
\begin{corollary}
\label{cor_K1K2}
Given Assumption~\ref{ass_epsBeta}, we obtain the norm equivalence 
\begin{align}
\label{estimates_P}
  K_1^{-1} a(v,v) 
  \le a(\calP v, v)
  \le K_2\, a(v,v) 
\end{align}
for all $v\in\V$ with the constants $K_1$ and $K_2$ from Lemmata~\ref{lem_K1} and~\ref{lem_K2}. 
\end{corollary}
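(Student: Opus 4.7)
This is the standard additive Schwarz norm equivalence, so the plan is to assemble the two bounds from the pieces already prepared, namely Lemmata~\ref{lem_K1} and~\ref{lem_K2}, together with the defining property of the $a$-orthogonal projections $\Pz$ and Cauchy--Schwarz. The key identity that I will use repeatedly is that for any $w \in \V_z$ one has $a(\Pz u, w) = a(u, w)$, and by symmetry $a(w,\Pz u) = a(w,u)$; in particular $a(\Pz v, v) = a(\Pz v, \Pz v) = \Vvert \Pz v \Vvert^2$.

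For the upper bound, I would observe that $\calP v = \sum_{z\in\calN}\Pz v$ is a valid decomposition with summands $\Pz v \in \V_z$, so Lemma~\ref{lem_K2} applied to $u := \calP v$ yields
\[
\Vvert \calP v \Vvert^2 \;\le\; K_2 \sum_{z\in\calN} \Vvert \Pz v \Vvert^2 \;=\; K_2 \sum_{z\in\calN} a(\Pz v,v) \;=\; K_2\, a(\calP v, v).
\]
A single application of Cauchy--Schwarz then gives $a(\calP v, v) \le \Vvert \calP v\Vvert\,\Vvert v\Vvert \le \sqrt{K_2\,a(\calP v,v)}\;\Vvert v\Vvert$, and squaring and dividing by $a(\calP v, v)$ produces $a(\calP v, v) \le K_2\,a(v,v)$.

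For the lower bound, I would use the specific partition-of-unity decomposition $v = \sum_{z\in\calN} u_z$ with $u_z := \lambda_z v$. Since $\lambda_z$ vanishes on $\partial D_z$ we have $u_z \in \V_z$, so the projection identity gives $a(u_z, v) = a(u_z, \Pz v)$. Summing and applying Cauchy--Schwarz twice,
\[
a(v,v) \;=\; \sum_{z\in\calN} a(u_z, \Pz v) \;\le\; \Bigl(\sum_{z\in\calN}\Vvert u_z\Vvert^2\Bigr)^{1/2} \Bigl(\sum_{z\in\calN}\Vvert \Pz v\Vvert^2\Bigr)^{1/2}.
\]
Lemma~\ref{lem_K1} bounds the first factor by $(\text{const})\,K_1\,\Vvert v\Vvert^2$, and as above the second factor equals $a(\calP v, v)$. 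Squaring and dividing by $a(v,v) = \Vvert v\Vvert^2$ produces $a(v,v) \lesssim K_1\, a(\calP v, v)$, i.e.\ $K_1^{-1} a(v,v) \le a(\calP v, v)$ up to the generic constant hidden in $\lesssim$.

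There is no real obstacle beyond carefully tracking which function lies in which local space so that the projection identity applies; all the analytic content (the partition-of-unity property \eqref{eqn_partunity}, the bounded overlap $2^d$, the $L^2$--to--energy estimate \eqref{estimate_L2norm}) has already been absorbed into the constants $K_1$ and $K_2$ in the preceding two lemmata, so the corollary itself is a short verification in the abstract Schwarz framework of \cite{KorY16}.
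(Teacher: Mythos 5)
Your proof is correct and follows exactly the route the paper intends: the paper gives no written proof but cites the abstract additive Schwarz results of \cite{KorY16}, and your argument --- the upper bound via Lemma~\ref{lem_K2} applied to the decomposition $\calP v=\sum_{z\in\calN}\Pz v$ together with the identity $\sum_{z}\Vvert \Pz v\Vvert^2=a(\calP v,v)$ and Cauchy--Schwarz, and the lower bound via the partition-of-unity decomposition of Lemma~\ref{lem_K1} and the projection identity --- is precisely that standard argument spelled out. Your observation that the lower bound holds only up to the generic constant hidden in the $\lesssim$ of Lemma~\ref{lem_K1} is accurate and consistent with the paper's own conventions.
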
	
Recall that $\calP\colon\V\to\V$ from \eqref{def_calP} has led to the definition of $\tilde\calP\colon\V^*\to\V$ by $\tilde{\calP}\calA = \calP$. With this operator, the estimate \eqref{estimates_P} can be rewritten in the form 
\[
  {K_1}^{-1}\, a(v,v) 
  \le \langle \calA \tilde\calP \calA v, v \rangle
  \le K_2\, a(v,v). 
\]
We summarize a number of properties of the operator $\calA \tilde\calP \calA$.
\begin{lemma}
\label{lem_APA}
The operator $\calA \tilde\calP \calA\colon\V \to\V^*$ is symmetric, coercive, and continuous. As a consequence, the operator is also invertible. 
\end{lemma}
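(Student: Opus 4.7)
The plan is to verify the three named properties one at a time and then invoke Lax--Milgram to conclude invertibility. Throughout, denote by $b(u,v):=\langle \calA\tilde\calP\calA u,v\rangle$ the bilinear form on $\V\times\V$ induced by the operator; since $\calP=\tilde\calP\calA$, we have $b(u,v)=a(\calP u,v)$.

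First I would prove symmetry of $b$ by showing that each local projection $\Pz:\V\to\V_z$ is self-adjoint with respect to $a(\cdot,\cdot)$. Since $\Pz$ is the $a$-orthogonal projection onto $\V_z$, any $w\in\V$ satisfies $a(w-\Pz w,\varphi)=0$ for all $\varphi\in\V_z$. Using this with $\varphi=\Pz v$ and with $\varphi=\Pz u$ yields
\begin{equation*}
a(\Pz u, v) \;=\; a(\Pz u, \Pz v) \;=\; a(u, \Pz v),
\end{equation*}
so $\Pz$ is $a$-self-adjoint. Summing over $z\in\calN$ gives $a(\calP u,v)=a(u,\calP v)=a(\calP v,u)$, hence $b(u,v)=b(v,u)$.

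Next I would read off coercivity directly from Corollary~\ref{cor_K1K2}, which gives $b(v,v)\ge K_1^{-1}\,a(v,v)=K_1^{-1}\Vvert v\Vvert^2$. Combining this with the $L^2$-bound \eqref{estimate_L2norm}, i.e.\ $\Vert v\Vert^2\lesssim \cl^2(\eps L)^2\Vvert v\Vvert^2$, one has $\Vert v\Vert_{H^1}^2\lesssim \Vvert v\Vvert^2$ on $\V$, so $b(v,v)\gtrsim K_1^{-1}\Vert v\Vert_{H^1}^2$, establishing $\V$-coercivity. For continuity, I would use that $b$ is symmetric and positive semi-definite (again by Corollary~\ref{cor_K1K2}), so the Cauchy--Schwarz inequality applies to it:
\begin{equation*}
|b(u,v)| \;\le\; \sqrt{b(u,u)}\sqrt{b(v,v)} \;\le\; K_2\,\Vvert u\Vvert\,\Vvert v\Vvert,
\end{equation*}
and since $\Vvert\cdot\Vvert$ is bounded by the $H^1$-norm (as $V\in L^\infty(D)$), this yields continuity of $\calA\tilde\calP\calA:\V\to\V^*$.

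Having a symmetric, continuous and coercive bilinear form, Lax--Milgram supplies for every $F\in\V^*$ a unique $u\in\V$ with $b(u,v)=\langle F,v\rangle_{\V^*,\V}$ for all $v\in\V$; this is precisely the statement that $\calA\tilde\calP\calA$ is a bijection $\V\to\V^*$, with bounded inverse. The only subtle step is the symmetry argument, because $\tilde\calP$ is written as a sum of mappings $\tPz:\V^*\to\V_z$ and it is not obvious on the symbolic level; the identification $\calA\tilde\calP\calA=\calA\calP$ together with the $a$-self-adjointness of each $\Pz$ is the clean way around this.
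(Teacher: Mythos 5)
Your proposal is correct and takes essentially the same route as the paper: symmetry via the $a$-self-adjointness of the local projections $\Pz$ (which the paper uses implicitly when it swaps $a(\Pz u,v)=a(u,\Pz v)$), coercivity from the lower bound in Corollary~\ref{cor_K1K2} together with the norm equivalence \eqref{estimate_L2norm}, continuity from the upper bound, and invertibility by Lax--Milgram. Your continuity argument via Cauchy--Schwarz on the form $b$ rather than direct boundedness of $a$ and $\calP$ is only a cosmetic variation.
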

\begin{proof}
The symmetry follows from the definition of $\calP$, namely
\begin{align*}
  \langle \calA \tilde\calP \calA u, v \rangle 
  = a(\calP u, v)
  &= \sum\nolimits_{Q\in\calQa} a(\PQ u, v) + \sum\nolimits_{z\in\calNb} a(\Pz u, v) \\
  &= \sum\nolimits_{Q\in\calQa} a(u, \PQ v) + \sum\nolimits_{z\in\calNb} a( u, \Pz v)
  = a(u, \calP v)
  = \langle u, \calA \tilde\calP \calA v \rangle .
\end{align*}
The coercivity follows directly from the coercivity of $a(\cdot\,,\cdot)$ and \eqref{estimates_P}. Finally, the continuity follows from the boundedness of $a(\cdot\,,\cdot)$ and $\calP$.  
\end{proof}
Estimates of the form \eqref{estimates_P} are well-known from the preconditioner community for the computation of eigenvalues of a symmetric and positive definite matrix. The following approximation result, together with the local computability, then results in a well-designed preconditioner. 
\begin{theorem}
\label{thm_tildeP_gamma} 
Under Assumption~\ref{ass_epsBeta} and with the scaling factor $\vartheta := 1/(K_2+K_1^{-1})$ with $K_1$ and $K_2$ from Lemmata~\ref{lem_K2} and~\ref{lem_K1}, there exists a positive constant $\gamma_\calP < 1$ such that  
\[
  \Vvert \id - \vartheta {\calP}\Vvert
  = \Vvert \id - \vartheta\tilde{\calP}\calA\Vvert 
  := \sup_{v\in\V} \frac{\Vvert v - \vartheta\tilde{\calP}\calA v\Vvert}{\Vvert v\Vvert}
  \le \gamma_\calP < 1.
\]
\end{theorem}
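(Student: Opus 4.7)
The statement is a standard consequence of the spectral bounds in Corollary~\ref{cor_K1K2} once one views $\calP$ as a self-adjoint operator on $\V$ equipped with the energy inner product $a(\cdot,\cdot)$, so that the $\Vvert\cdot\Vvert$-operator norm of the self-adjoint operator $\id-\vartheta\calP$ equals its spectral radius. The plan therefore has three steps.

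First, I would verify that $\calP\colon\V\to\V$ is self-adjoint with respect to $a(\cdot,\cdot)$. This is immediate from the definition of the local $a$-orthogonal projections $\Pz$, each of which satisfies $a(\Pz u,v)=a(u,v)=a(u,\Pz v)$ for $v\in\V_z$, and hence $a(\Pz u,v)=a(u,\Pz v)$ for all $u,v\in\V$. Summing over $z\in\calN$ gives self-adjointness of $\calP=\sum_z\Pz$ in the $a$-inner product; this is essentially the computation already carried out in the proof of Lemma~\ref{lem_APA}.

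Second, I would translate Corollary~\ref{cor_K1K2} into the spectral statement
\[
  K_1^{-1}\,a(v,v)\;\le\;a(\calP v,v)\;\le\;K_2\,a(v,v)\qquad\text{for all }v\in\V,
\]
which, combined with self-adjointness, shows that the spectrum of $\calP$ in the energy Hilbert space lies in the interval $[K_1^{-1},K_2]$. Consequently the spectrum of $\id-\vartheta\calP$ is contained in $[1-\vartheta K_2,\,1-\vartheta K_1^{-1}]$, and
\[
  \Vvert\id-\vartheta\calP\Vvert\;=\;\max\bigl(|1-\vartheta K_2|,\,|1-\vartheta K_1^{-1}|\bigr).
\]

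Third, I would insert $\vartheta=1/(K_2+K_1^{-1})$. A direct computation gives
\[
  1-\vartheta K_2\;=\;\frac{K_1^{-1}}{K_2+K_1^{-1}}\in(0,1),\qquad
  1-\vartheta K_1^{-1}\;=\;\frac{K_2}{K_2+K_1^{-1}}\in(0,1),
\]
so both endpoints are positive and the operator norm is bounded by
\[
  \gamma_\calP\;:=\;\frac{K_2}{K_2+K_1^{-1}}\;<\;1,
\]
which yields the claimed estimate. The identity $\Vvert\id-\vartheta\tilde\calP\calA\Vvert=\Vvert\id-\vartheta\calP\Vvert$ is just the factorization $\calP=\tilde\calP\calA$ recalled after~\eqref{def_calP}.

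There is no genuine obstacle here; the only subtle point is that the operator norm in question is measured in $\Vvert\cdot\Vvert$, i.e.\ in the norm induced by $a(\cdot,\cdot)$, so one must genuinely use self-adjointness of $\calP$ in the $a$-inner product rather than in the $L^2$-inner product. Once that is in place, the rest is a one-line optimisation that produces the scaling factor $\vartheta$ of the statement (note that this $\vartheta$ is not the symmetric-Richardson optimum $2/(K_2+K_1^{-1})$; with the paper's choice $\id-\vartheta\calP$ is itself positive, which is what will be exploited in the subsequent convergence analysis).
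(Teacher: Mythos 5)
Your proposal is correct and follows essentially the same route as the paper: both arguments rest on the self-adjointness of $\calP$ with respect to the energy inner product, the two-sided form bounds of Corollary~\ref{cor_K1K2}, and the observation that with $\vartheta=1/(K_2+K_1^{-1})$ the operator $\id-\vartheta\calP$ is positive, which yields the bound $\gamma_\calP\le K_2/(K_1^{-1}+K_2)<1$. The only cosmetic difference is that you invoke the spectral theorem (operator norm equals spectral radius, so the norm is controlled by the endpoints $1-\vartheta K_2$ and $1-\vartheta K_1^{-1}$), whereas the paper obtains the same bound elementarily by showing $\Vvert\id-\vartheta\calP\Vvert=\sup_{\Vvert v\Vvert=1}a\bigl((\id-\vartheta\calP)v,v\bigr)$ via Cauchy--Schwarz and the polarization identity.
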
	
\begin{proof}
By the spectral equivalence \eqref{estimates_P} we conclude that for any $v\in\V$ it holds 
\[
  (1 - \vartheta K_2)\, \Vvert v \Vvert^2 
  \le a(v-\vartheta {\calP}v, v)
  = a(v,v) - \vartheta a(\calP v, v)
 \le (1- \vartheta K_1^{-1})\, \Vvert v \Vvert^2. 
\]
Furthermore, for any linear operator $\calQ\colon \V\to\V$ we have
\[
  \Vvert \calQ\Vvert^2
  := \sup_{v\in\V, \Vvert v\Vvert = 1} a(\calQ v, \calQ v) 
  \le \sup_{v\in\V, \Vvert v\Vvert = 1} \sup_{w\in\V, \Vvert w\Vvert = 1} a(\calQ v, w)\, \Vvert \calQ\Vvert
  \le \Vvert \calQ\Vvert^2.
\]
Thus, all estimates are in fact equalities. 
If $a(\calQ\, \cdot\,, \cdot)$ defines in addition a scalar product in $\V$, then we get by the polarization identity 
\[
  \Vvert \calQ\Vvert 
  = \sup_{v\in\V, \Vvert v\Vvert = 1} \sup_{w\in\V, \Vvert w\Vvert = 1} a(\calQ v, w)
  = \sup_{v\in\V, \Vvert v\Vvert = 1} a(\calQ v, v). 
\]
By the mentioned spectral equivalence we know that $\calQ := \id -\vartheta {\calP}$ defines a scalar product for $\vartheta < 1/K_2$ such that the choice $\vartheta := 1/(K_2+K_1^{-1})$ gives 
\[
  \Vvert \id - \vartheta {\calP}\Vvert
  = \sup_{v\in\V, \Vvert v\Vvert = 1} a((\id - \vartheta {\calP})v, v)
  \le 1- \vartheta K_1^{-1}  
  = \frac{K_2}{K_1^{-1} + K_2}. \qedhere
\]
\end{proof}	
The proof of Theorem~\ref{thm_tildeP_gamma} provides an explicit formula of the upper bound, namely $\gamma_\calP \le K_2 / (K_1^{-1} + K_2)$.  
This shows that, given Assumption~\ref{ass_epsBeta}, $\gamma_\calP$ only depends on the geometry of the potential, which is encoded in the constants $K_1$ and $K_2$, but not on the actual values $\alpha$ and $\beta$ and thus, not on their contrast.  
Note, however, that  $\gamma_\calP$ depends on $L$, which itself may depend on $\eps$. 
\subsection{Exponential decay}
\label{sect_decay_Greens_func}
In the last part of this section we want to relate the previous results to the exponential decay of the Green's function associated with the differential operator $\calH$. For this, let $f\in L^2(D)$ be a given function with local support and consider the problem of finding $u \in \V$ with $\calA u=F$, where $F:=(f,\,\cdot\,)\in \V^*$. To approximate $u$, we define the iteration 
\begin{align}
\label{sec3-preliminiary-iterations}
  u^{(k)} 
  := u^{(k-1)} + \vartheta \big( \tilde \calP F - \calP u^{(k-1)} \big)
  = u^{(k-1)} + \vartheta \calP \big( u - u^{(k-1)} \big)
\end{align}
for $k\ge 1$ and trivial starting value $ u^{(0)}=0$. 
First, we observe that $u^{(1)}=\vartheta \tilde \calP F = \vartheta \calP u$ is a local function, because its computation only includes the solution of local problems. To see this, note that 
\[
	u^{(1)} 
	= \vartheta\tilde\calP F
	= \sum_{z \in\calN} \vartheta\tilde\calP_z F
	=: \sum_{z \in\calN} u_z^{(1)} ,
\]
where all $u_z^{(1)}$ are fully defined by local test functions $v_z\in\V_z$ through 
\[ 
	a(u_z^{(1)} , v_z) 
	= \vartheta\, a(\calA^{-1} F, v_z)    
	= \vartheta\, \langle F, v_z \rangle_{\V^*, \V} 
	= (\vartheta f, v_z).
\]
This implies that the application of $\tilde \calP$ maintains locality in the sense that the support of $u^{(1)}=\vartheta\tilde\calP F$ is at most one~$\eps$-layer larger than the support of $f$. Inductively, we immediately see that the support of 
$$
  u^{(k)} = u^{(1)}  + (\id - \vartheta \calP )\, u^{(k-1)} 
$$
is at most $k$ $\eps$-layers larger than the support of $f$.  
Next, we observe from the definition of $u^{(k)} $ in~\eqref{sec3-preliminiary-iterations} that 
\begin{align*}
  u - u^{(k)} 
  = (\id - \vartheta \calP) \big( u - u^{(k-1)} \big)
  = (\id - \vartheta \calP)^k \big( u - u^{(0)} \big)
  = (\id - \vartheta \calP)^k u.
\end{align*}
Applying Theorem \ref{thm_tildeP_gamma}, we obtain
\begin{align*}
  \Vvert u - u^{(k)} \Vvert 
  \le \gamma_\calP^k\, \Vvert u \Vvert,
\end{align*}
i.e., we have that $u^{(k)}$ converges exponentially fast to $u$ with rate~$\gamma_\calP$. 
Recall the earlier observation that the support of $u^{(k)}$ is at most $k$ $\eps$-layers bigger than the the support of the local function $f$, i.e., $\supp(u^{(k)})\subseteq B^\infty_{k\eps }(\supp f)$, where $B^\infty_{r}$ denotes the ball of radius $r$ in the sup norm. 
This then shows that the Green's function associated with the Schr\"odinger operator must have an exponential decay as summarized in the following corollary. 
\begin{corollary}[Exponential decay of the Green's function]
Consider Assumption~\ref{ass_epsBeta} and let $f\in L^2(D)$ be local. 
Then, the solution $u\in \calV$ of the variational problem $\calA u = f$ decays exponentially fast in the sense of
\[  
  \Vvert u\Vvert_{D\setminus B^\infty_{k\eps}(\supp f)}
  \lesssim \gamma_\calP^k\, \Vvert u\Vvert.
\]
Here, $0<\gamma_\calP<1$ denotes the constant from Theorem~\ref{thm_tildeP_gamma}.
\end{corollary}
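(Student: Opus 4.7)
The plan is to combine the two ingredients already developed in this section: the contraction bound of Theorem~\ref{thm_tildeP_gamma} and the quasi-locality of $\calP$. I would make the iteration \eqref{sec3-preliminiary-iterations} the backbone of the argument and extract the exponential decay of $u$ from the exponential decay of the iteration error together with a stepwise support control of $u^{(k)}$.

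First, I would formalize the locality claim $\supp u^{(k)} \subseteq B^\infty_{k\eps}(\supp f)$ by induction on $k$. Starting from $u^{(0)}=0$, the inductive step rewrites
$$u^{(k)} = u^{(k-1)} + \vartheta \sum_{z\in\calN}\tPz\bigl(F - \calA u^{(k-1)}\bigr)$$
and uses that each local correction $\tPz(F - \calA u^{(k-1)}) \in H^1_0(D_z)$ is zero unless the patch $D_z$ (a cube of side $2\eps$ centered at $z$) meets $\supp f \cup \supp u^{(k-1)}$, since only then does the functional $F - \calA u^{(k-1)}$ act nontrivially on $\V_z$. Hence the support grows by at most one $\eps$-layer per iteration in the sup norm, so $\supp u^{(k)} \subseteq B^\infty_\eps(\supp f \cup \supp u^{(k-1)}) \subseteq B^\infty_{k\eps}(\supp f)$.

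Next, I would record the clean error identity $u - u^{(k)} = (\id - \vartheta \calP)^k u$, which follows from subtracting $u$ from both sides of \eqref{sec3-preliminiary-iterations}, using $\calA u = F$, and unrolling from $u - u^{(0)} = u$. Invoking Theorem~\ref{thm_tildeP_gamma} then yields the exponential contraction $\Vvert u - u^{(k)} \Vvert \le \gamma_\calP^k \Vvert u \Vvert$.

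Combining both ingredients: on $\omega_k := D \setminus B^\infty_{k\eps}(\supp f)$ the function $u^{(k)}$ vanishes by the locality step, hence $u \equiv u - u^{(k)}$ on $\omega_k$ and
$$\Vvert u\Vvert_{\omega_k} = \Vvert u - u^{(k)}\Vvert_{\omega_k} \le \Vvert u - u^{(k)}\Vvert \le \gamma_\calP^k \Vvert u\Vvert,$$
which is the claim. Since the heavy lifting (the norm equivalence \eqref{estimates_P} underpinning Theorem~\ref{thm_tildeP_gamma}) is already done, I foresee no genuine obstacle. The only point that requires care is the locality induction: one must verify the \emph{one}-$\eps$-layer enlargement rigorously and check that the periodic identifications across $\partial D$ are compatible with the sup-norm ball used to define $B^\infty_{k\eps}$.
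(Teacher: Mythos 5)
Your proposal is correct and follows essentially the same route as the paper: the paper's argument preceding the corollary is exactly the iteration \eqref{sec3-preliminiary-iterations} with $u^{(0)}=0$, the inductive one-$\eps$-layer support growth of $u^{(k)}$ via the locality of $\tilde\calP$, the identity $u-u^{(k)}=(\id-\vartheta\calP)^k u$ with Theorem~\ref{thm_tildeP_gamma}, and the restriction of the error bound to $D\setminus B^\infty_{k\eps}(\supp f)$ where $u^{(k)}$ vanishes. Your closing caveat about making the one-layer enlargement and the periodic identification precise is the same level of rigor the paper itself adopts, so nothing further is missing.
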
 
\begin{example}
We again consider the extreme cases and analyze the number of necessary steps to achieve $\gamma_\calP^k \le \tol$. 
In the worst case~$L\approx \eps^{-1}$, i.e., $K_1\approx 2^{d+1}\eps^{-2}$, we need $k = \mathcal{O}(\log(1/\tol)\, \eps^{-2})$ steps. 
Thus, the solution~$u$ may not be localized. 
On the other hand, the periodic setting with $L=1$ yields $K_1 = 2^{d+2}$ and thus, $k = \mathcal{O}(\log(1/\tol))$. This means that the number of needed steps to reach the error tolerance is independent of $\eps$. 
In the case of interest with $L\approx\log(1/\eps)$ we have~$k = \mathcal{O}(\log(1/\tol)\log(1/\eps)^p)$ for a certain polynomial degree $p\le 4d+2$. 
This means that the number of steps only depends logarithmically on $\eps$ and that $u$ is of local nature.  
\end{example}
The obtained decay result is in agreement with the well-known exponential decay of the Green's function associated with $\calH$ for positive $V$ on a sufficiently large sub-domain. In the case of constant potentials $V\equiv \beta$, this is for instance shown in \cite[Lem.~3.2]{Glo11}. 
We shall revisit the discussion of this paragraph later in Section~\ref{sect_localization_oneStep} as part of the localization proof for the eigenfunctions of $\calH$.
\begin{remark}
\label{rem_smallGamma}
For later arguments, it is important to achieve an error reduction factor (per step) below some prescribed value~$\gap<1$. 
This is easily achieved by considering multiple steps of the iteration~\eqref{sec3-preliminiary-iterations} with preconditioner $\vartheta\calP$. 
More precisely, we introduce the operator~$\calPP\colon \V\to\V$, which includes~$k$ steps with $k$ large enough such that  
\[
	\Vvert \id - \calPP \Vvert
	\le \gamma:= \gamma^k_\calP 
	\le \frac{1-\gap}{2} < 1.
\]
This then implies $\gamma+\gap \le (1+\gap)/2 < 1$. 
It goes without saying that this enlarges the support of the solution, i.e., the operator~$\calPP$ spreads information over $k$ $\eps$-layers. 
The order of $k$ can be estimated by 
\[
	k \approx \frac{\log(1-\gap) - \log 2}{\log \gamma_\calP}.
\]  
In the discussed case $L\approx\log(1/\eps)$, where $\gamma_\calP$ depends polylogarithmically on~$\eps$, we have $k = \calO(\log(1/(1-\gap)) \log(\plog(1/\eps)))$.
\end{remark}
%
%
\section{Quantitative Localization of Eigenfunctions}\label{sect_localization}
In the following we want to transfer the localization arguments from Section \ref{sect_decay_Greens_func} to the spectrum of $\calH$. 
Assuming a sufficiently large spectral gap between the smallest and the ($K\hspace{-0.5ex}+\hspace{-0.5ex}1$)-st eigenvalue for some moderate $K$, we show that the ground state is indeed quasi-local. The assumption will later turn out to be valid for potentials with a high level of disorder. Nevertheless, the convergence results of this section are independent of the actual value of $K$ in the sense that the ground state is always in the span of $K$ exponentially localized functions. This shows localization of the eigenfunction itself if $K$ is sufficiently small compared to $\eps^{-d}$. 
%
\subsection{Inverse power iteration}\label{sect_localization_power}
As mentioned in Section~\ref{sec:evp:model}, the Schr\"odinger eigenvalue problem \eqref{eq:EVP} can be written as an operator equation in the dual space of $\V$, namely  
\[
  \calA u = E\, \calI u. 
\]
Recall that $\calA\colon\V\to\V^*$ is the operator corresponding to $a(\cdot\,,\cdot)$ whereas $\calI\colon\V\to\V^*$ denotes the extension of the inner product in $L^2(D)$. We consider the inverse power method for PDE eigenvalue problems and illustrate the method first for the case of a spectral gap after the first eigenvalue $E^1$. Due to the ellipticity of $\calA$, we may assume that there exists a Hilbert basis of $\V$ composed of the eigenfunctions $u_1, u_2, \dots$ normalized in the $L^2(D)$-norm. Further, we assume that a given starting function $v^{(0)}\in \V$ satisfies $(u_1, v^{(0)})\neq 0$, i.e., we may express $v^{(0)}$ in the form $v^{(0)} = \sum_{i=1}^\infty \alpha_i u_i$ with $\alpha_1\neq 0$.  

The inverse power method, known from numerical linear algebra \cite[Ch.~10.3]{AllK08}, also converges in the Hilbert space setting, cf.~\cite{EriSL95, AltF18ppt}. The iteration, including a normalization by $E^1$, has the form 
\begin{align}
\label{eqn_powIteration}
  v^{(k)} 
  = {E^1} \calA^{-1} \calI v^{(k-1)}
  =: \calB v^{(k-1)}
  = \calB^k v^{(0)}
\end{align}
with $\calB := E^1\, \calA^{-1} \calI\colon \V\to\V$. 
With $\gap:= E^1/E^2 < 1$ the iteration leads to 
\[
  v^{(k)} 
  = \alpha_1 u_1 + \sum_{i=2}^\infty \alpha_i\, \Big( \frac{E^1}{E^i}\Big)^k u_i.    
\]
Note that~$(u_1, v^{(k)}) = \alpha_1$ remains unchanged due to the scaling factor $E^1$. 
Measuring the distance of $v^{(k)}$ to the eigenspace of $u_1$ in the energy norm by
\[
  \err^{(k)} 
  := \min_{c\in \R} \Vvert v^{(k)} - c\,u_1 \Vvert
  = \Vvert v^{(k)} - \alpha_1 u_1 \Vvert
  = \Big( \sum_{i=2}^\infty |\alpha_i|^2 \Big( \frac{E^1}{E^i}\Big)^{2k} E^i \Big)^{1/2} ,  
\]
we obtain due to the orthogonality of the eigenfunctions, 
\[
  \err^{(k)} 
  = \Vvert v^{(k)} - \alpha_1 u_1 \Vvert 
  \le \gap^k \Vvert v^{(0)} - \alpha_1 u_1\Vvert  
  = \gap^k \err^{(0)}. 
\] 
This shows the exponential convergence of the inverse power iteration with a rate depending on the gap between the first two eigenvalues. 	
%
%
\subsection{Preconditioned iteration step}\label{sect_localization_oneStep}
For the proof of localization of the Schr\"odinger states we need to replace the inverse iteration \eqref{eqn_powIteration} by an inexact iteration including the preconditioner from Section~\ref{sec:precond}. 
More precisely, we like to replace one inverse power step by a fixed number of preconditioned steps, which we indicate by the operator~$\calPP$, cf.~Remark~\ref{rem_smallGamma}. 
Recall that this includes an error reduction by a factor~$\gamma$ with $\gamma+\gap < 1$ and that $\calPP$ enlarges the support by only a few~$\eps$-layers. 
One step of the inverse power iteration $v^{(k)} = \calB v^{(k-1)}$ is replaced by 
\begin{align}
\label{ideal_PINVIT}
	\tilde v^{(k)}
	:= \tilde v^{(k-1)} + \calPP \big( E^1 \calA^{-1}\calI \tilde v^{(k-1)} - \tilde v^{(k-1)} \big) 
	= \tilde v^{(k-1)} + \calPP \big( \calB \tilde v^{(k-1)} - \tilde v^{(k-1)} \big).
\end{align}
In the numerical linear algebra community, this iteration is called {\em preconditioned inverse iteration} (PINVIT) \cite{DyaO80,BraKP95,Kny98} if the (unknown) factor $E^1$ is replaced by an approximation of the energy, e.g., by the Rayleigh quotient. 
Here, however, we consider again the exact scaling by the first eigenvalue. 
\begin{remark}
For practical computations it is also of interest that $\tilde v^{(k)}$ is cheap to compute. This is indeed the case, since its computation only involves the solution of local problems. 
\end{remark}
%
The locality of the iterates was already discussed in Section~\ref{sect_decay_Greens_func}, i.e., up to logarithmic terms in $1/(1-\gap)$ and $1/\eps$ the support of $\tilde v^{(k)}$ is at most $k$ $\eps$-layers larger than the support of the initial function $v^{(0)}$. 
It remains to show the exponential convergence of the iteration scheme is maintained, despite the inclusion of the preconditioner. For this, we show that the error reduces by a fixed factor in every iteration step.  

As before, we assume $v^{(0)} = \sum_{i=1}^\infty \alpha_i u_i$ with $\alpha_1\neq 0$. 
For the (exact) inverse iteration we have seen that $(u_1, v^{(0)})$ remains unchanged during the iteration process. 
This changes by the implementation of the preconditioner. 
However, assuming $\tilde v^{(k-1)} = \sum_{i=1}^\infty \hat \alpha_i u_i$ we can estimate 
\begin{align*}
  \err^{(k)} 
  = \min_{c\in \R} \Vvert \tilde v^{(k)} - c\,u_1 \Vvert
  &\le \Vvert \tilde v^{(k)} - \hat\alpha_1 u_1\Vvert \\
  &\le \Vvert \calB \tilde v^{(k-1)} - \hat\alpha_1 u_1 \Vvert 
   + \Vvert (\id- \calPP) ( \calB \tilde v^{(k-1)} - \tilde v^{(k-1)} )\Vvert \\
  &\le \Vvert \sum_{i=2}^\infty \tfrac{E^1}{E^i} \hat \alpha_i u_i \Vvert 
   + \gamma\, \Vvert \sum_{i=2}^\infty (\tfrac{E^1}{E^i}-1) \hat\alpha_i u_i  \Vvert \\
  &\le \gap\, \Vvert \tilde v^{(k-1)} - \hat\alpha_1 u_1\Vvert 
   + \gamma\, \Vvert \tilde v^{(k-1)} - \hat\alpha_1 u_1\Vvert \\
  &= (\gap + \gamma)\, \err^{(k-1)}.
\end{align*} 
Thus, we have an error reduction by a factor $(\gap + \gamma)$ in each step. 
%
%
\subsection{Block iteration}\label{sect_localization_block}
Since we cannot guarantee a spectral gap after the first eigenvalue, we need a block iteration. 
For the general case we assume that there is a spectral gap within the first $K+1$ eigenvalues, which leads to the definition $\gap:= E^1/E^{K+1} < 1$. We aim to perform a block version of the inverse power iteration \eqref{eqn_powIteration}. For this we need a starting subspace to initiate the power iteration. 
%
Let $V^{(0)}$ denote a basis of such a $K$-dimensional subspace, 
\[
  V^{(0)}
  = \begin{bmatrix}
   v^{(0)}_1, & v^{(0)}_2, & \dots, & v^{(0)}_K
  \end{bmatrix} 
  \in \V^K.
\]
As before, we can express these functions in terms of the eigenfunctions of the Schr\"odinger operator, i.e., for $j=1, \dots, K$,  
\begin{align*}
  v_j^{(0)} = \sum_{i=1}^\infty \alpha_{i,j}\, u_i 
  \qquad\text{and}\qquad
  \calC:= [\alpha_{i,j}]_{i,j= 1, \dots, K} \in \R^{K,K}. 
\end{align*}
The matrix $\calC$ contains the coefficients of the initial functions $v_j^{(0)}$ in terms of $u_1, \dots, u_{K}$. As generalization of the condition $\alpha_1\neq 0$ in Section~\ref{sect_localization_power}, we assume here that~$\calC$ is invertible. 
%
The block inverse iteration (or simultaneous inverse iteration) including normalization then reads
\begin{align}
\label{eqn_blockIteration}
  V^{(k)}
  = {E^1} \calA^{-1} \calI V^{(k-1)}
  = \calB^k V^{(0)}.
\end{align}
For a single function $v_j^{(0)}$ this means 
\[
  v_j^{(k)}  
  = \calB^k v_j^{(0)}
  = {(E^1)^k} \sum_{i=1}^\infty \alpha_{i,j} (\calA^{-1}\calI)^k u_i
  = \sum_{i=1}^\infty \alpha_{i,j} \Big( \frac{E^1}{E^i}\Big)^k u_i.  
\]
With $x := \calC^{-1} e_1 \in \R^{K}$ the linear combination $V^{(k)}x \in \V$ satisfies 
\[
  V^{(k)}x
  = \sum_{i=1}^\infty\, [\alpha_{i,1}, \dots, \alpha_{i,K}]\, x\, \Big( \frac{E^1}{E^i}\Big)^k u_i  
  = u_1 + \sum_{i=K+1}^\infty \overline\alpha_{i} \Big( \frac{E^1}{E^i}\Big)^k u_i,  
\]
where $\overline\alpha_i := [\alpha_{i,1}, \dots, \alpha_{i,K}]\, x$. 
Similarly as in Section~\ref{sect_localization_power} we show that~$V^{(k)}x$ converges exponentially to the span of~$u_1$ with rate~$\gap$,
\[
  \err^{(k)} 
  = \min_{c\in\R} \Vvert V^{(k)}x - c\, u_1 \Vvert 
  = \Vvert V^{(k)}x - u_1 \Vvert
  \le \gap^k \Vvert V^{(0)}x - u_1 \Vvert
  = \gap^k \err^{(0)}.
\]
Note that the initial error $\err^{(0)}$ is bounded in terms of $\calC^{-1}$ and the energy of the starting functions $v^{(0)}_1, v^{(0)}_2, \dots, v^{(0)}_K$. 
Thus, for the convergence of the block iteration it remains to find a suitable starting block $V^{(0)}$. Its precise construction depends on the considered potential $V$, see e.g.~Section~\ref{sect:gaps:random:starting} for a potential with a tensor product structure or 
~Section~\ref{sect:gaps:domino:starting} for a potential consisting of domino blocks of different size. 
To prove the quasi-locality of the Schr\"odinger ground state, we need to include the preconditioner from Section~\ref{sec:precond}. 
%
%
\subsection{Inexact block iteration}\label{sect_localization_inexactIter}
We install the preconditioner $\calPP$ into the block iteration, which yields a block version of the preconditioned inverse iteration introduced in Section~\ref{sect_localization_oneStep}. 
Given $V^{(0)}$ we locally compute a sequence $\tilde V^{(k)}$ by a simultaneous application of the preconditioned iteration. 
In the main result of this paper we show that the first eigenfunction $u_1$ is essentially an element of $\sspan \tilde V^{(k)}$, i.e., a $K$-dimensional function space that is spanned by basis functions that are exponentially decaying in distances of $\eps$. 
If $V^{(0)}$ only contains local functions and $K$ is of moderate size, i.e., if there is a significant spectral gap after the first few eigenvalues, then $u_1$ itself is exponentially localized as the linear combination of $K$ exponentially localized functions. 
\begin{theorem}[Convergence of inexact block iteration]
\label{theorem-convergence-inexact-block}
Given Assumption~\ref{ass_epsBeta}, $\gap = E^1/E^{K+1}$, and a prescribed tolerance $\tol$, we consider a starting subspace~$V^{(0)}$ with invertible coefficient matrix~$\calC$. 
Assume that the preconditioner~$\calPP$ from Remark~\ref{rem_smallGamma} satisfies $\gamma \lesssim \gap^k$ with $k \approx \log(1/\tol) / \log(1/\gap)$. 
Then, $k$ steps of the preconditioned block iteration yields an approximation $\tilde v \in \sspan \tilde V^{(k)}$ with		
\[
\Vvert \tilde v - u_1 \Vvert 
\lesssim \tol\, \err^{(0)}
= \tol\, \Vvert V^{(0)} \calC^{-1} e_1 - u_1\Vvert. 
\]  
Moreover, the support of $\tilde v$ is only $k^2$ $\eps$-layers larger than the union of the supports of the starting functions in $V^{(0)}$.
\end{theorem}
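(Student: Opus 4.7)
The plan is to mirror the single-vector analysis of Section~\ref{sect_localization_oneStep} on a specific element of $\sspan \tilde V^{(k)}$. Since the preconditioned iteration is linear and acts column-wise, the natural candidate is $\tilde v := \tilde V^{(k)} \calC^{-1} e_1$, which coincides with the single-vector iterate $w^{(k)}$ obtained by applying $k$ preconditioned steps to $w^{(0)} := V^{(0)} \calC^{-1} e_1$. By invertibility of $\calC$ and the argument at the start of Section~\ref{sect_localization_block}, the eigen-expansion of $w^{(0)}$ has coefficient $1$ at $u_1$ and coefficient $0$ at $u_2,\ldots,u_K$, so $w^{(0)} - u_1$ lies entirely in the tail subspace $\sspan(u_i : i \geq K+1)$ with $\Vvert w^{(0)} - u_1 \Vvert = \err^{(0)}$.

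A direct induction using $\calB u_1 = u_1$ yields the key identity $w^{(k)} - u_1 = L^k(w^{(0)} - u_1)$ with $L := \id + \calPP(\calB - \id) = \calB - (\id - \calPP)(\calB - \id)$. The second representation exhibits $L$ as a perturbation of $\calB$ by an operator of energy norm at most $\gamma$. I would then decompose $\V$ into the $a$-orthogonal eigenblocks $\sspan(u_1)$, $\sspan(u_i : 2 \leq i \leq K)$, and $\sspan(u_i : i \geq K+1)$, each of which is invariant under $\calB$ with energy-norm contraction factors $1$, $E^1/E^2$, and $\gap$ respectively. Tracking the three components of $L^k(w^{(0)} - u_1)$ through these projections gives a coupled recurrence: one step of $L$ contracts the tail component by at most $\gap + \gamma$, while the perturbation leaks an amount of size at most $\gamma$ times the previous norm into each of the other two blocks.

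The main obstacle is that the leakage into $\sspan(u_1)$ is not contracted by further iterations, since $\calB u_1 = u_1$, and therefore accumulates over the $k$ steps. A geometric-series summation with ratio $\gap + \gamma$ bounds this accumulated $\sspan(u_1)$-component by $\gamma/(1 - \gap - \gamma) \cdot \err^{(0)}$, and an analogous argument controls the $\sspan(u_2,\ldots,u_K)$-component. Under the hypothesis $\gamma \lesssim \gap^k$ with $k \approx \log(1/\tol)/\log(1/\gap)$, both $(\gap + \gamma)^k$ and $\gamma/(1 - \gap - \gamma)$ are of order $\gap^k \lesssim \tol$, which yields $\Vvert \tilde v - u_1\Vvert \lesssim \tol \cdot \err^{(0)}$. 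This quantitative bookkeeping of the non-decaying leakage is what forces the precondition $\gamma \lesssim \gap^k$ and the sharpening of $\calPP$ by increasing its inner-iteration count as $k$ grows.

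The support statement follows from the locality argument of Section~\ref{sec:precond}. By Remark~\ref{rem_smallGamma}, ensuring $\gamma \lesssim \gap^k$ requires $\calPP$ to consist of $m = \calO(k)$ inner steps of the $\eps$-local operator $\vartheta\calP$, each of which enlarges the support by at most one $\eps$-layer. After $k$ outer iterations applied column-wise to $V^{(0)}$, the support of $\tilde v$ has thus grown by at most $k \cdot m = \calO(k^2)$ $\eps$-layers beyond $\bigcup_{j=1}^{K} \supp v_j^{(0)}$.
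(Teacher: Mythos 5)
Your proposal is correct, but it follows a genuinely different route than the paper. The paper's proof never touches the spectral structure of the inexact iterates: it defines, column by column, the comparison error $e^{(k)} := v^{(k)} - \tilde v^{(k)}$ between the exact and the preconditioned block iterates, derives the recursion $e^{(k)} = (\id - \calPP)(\calB - \id)v^{(k-1)} + (\id - \calPP)e^{(k-1)} + \calPP\calB e^{(k-1)}$, bounds it by $\Vvert e^{(k)}\Vvert \lesssim \gamma(1+2\gamma)^k \Vvert v^{(0)}\Vvert$ using $\Vvert\calB\Vvert\le 1$ and $e^{(0)}=0$, and concludes with the triangle inequality against the known exact-iteration estimate $\Vvert V^{(k)}x - u_1\Vvert \lesssim \gap^k \err^{(0)}$. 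You instead analyze the error-propagation operator $L = \id + \calPP(\calB-\id)$ directly on the combined vector $w^{(0)} = V^{(0)}\calC^{-1}e_1$, exploiting the fixed-point identity $Lu_1=u_1$ (which the paper never uses) and a three-block spectral decomposition with accumulation bookkeeping for the non-contracted leakage. Both arguments require exactly the hypothesis $\gamma\lesssim\gap^k$ and yield the same conclusion; your version has the advantage that the final bound comes out directly in terms of $\err^{(0)} = \Vvert w^{(0)}-u_1\Vvert$ (the paper's intermediate bound is in terms of $\Vvert v^{(0)}\Vvert$ of the individual columns and quietly absorbs the recombination by $x=\calC^{-1}e_1$ into the $\lesssim$), at the price of heavier bookkeeping: in a full write-up you would have to confirm that the leakage at step $j$ is driven by the tail and middle components only (note $(\calB-\id)u_1=0$, so your bound $\gamma$ rather than $2\gamma$ on the perturbation is in fact justified), and that the feedback of the accumulated, non-contracted components into the recurrence is of higher order in $\gamma$ — routine, since those components are themselves $\calO(\gamma\,\err^{(0)})$, and $\gamma/(1-\gap-\gamma)\lesssim\gamma/(1-\gap)$ is controlled by the normalization of Remark~\ref{rem_smallGamma}. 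Your locality argument (about $\calO(k)$ inner $\eps$-local steps per outer iteration, hence $\calO(k^2)$ layers total, up to the logarithmic factors already present in Remark~\ref{rem_smallGamma}) matches the paper's reasoning.
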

\begin{proof}
First note that, due to the scaling by $E^1$, we have $\Vvert \calB\Vvert \le 1$. 
We prove that the inexact iteration yields a good approximation of the inverse power method. 
For this, we compare the space obtained by the inexact block iteration $\tilde V^{(k)}$ with $V^{(k)}$. 
Since we consider a simultaneous iteration, it is sufficient to consider a single vector of~$V^{(0)}$, which we denote by~$v^{(0)} \in \V$.  
For the error $e^{(k)} := v^{(k)} - \tilde v^{(k)}$ we have 
\begin{align*}
  e^{(k)} 
  = v^{(k)} - \tilde v^{(k)} 
  &= \calB v^{(k-1)} - \tilde v^{(k-1)} - \calPP \big(\calB \tilde v^{(k-1)} - \tilde v^{(k-1)} \big) \\
  &= (\id - \calPP) \big( \calB v^{(k-1)} - \tilde v^{(k-1)} \big) + \calPP \calB e^{(k-1)} \\
  &= (\id - \calPP) (\calB - \id) v^{(k-1)} + (\id - \calPP) e^{(k-1)} + \calPP \calB e^{(k-1)}.   
\end{align*}
Thus, with $v^{(k-1)} = \calB^{k-1} v^{(0)}$, $\Vvert\calB\Vvert\le 1$, and $\Vvert\calB-\id\Vvert\le 2$, we obtain  
\begin{align*}
  \Vvert e^{(k)}\Vvert  
  \le 2 \gamma\, \Vvert v^{(k-1)} \Vvert + (1+2\gamma)\, \Vvert e^{(k-1)} \Vvert 
  \le 2 \gamma\, \Vvert v^{(0)} \Vvert + (1+2\gamma)\, \Vvert e^{(k-1)} \Vvert.
\end{align*}
%
Since $e^{(0)} = v^{(0)} - v^{(0)} = 0$, we conclude 
\[
  \Vvert e^{(k)} \Vvert  
  \le 2\gamma\, \Vvert v^{(0)} \Vvert\, \sum_{\nu=0}^{k-1} (1+2\gamma)^\nu 
  \lesssim \gamma\, (1+2\gamma)^{k} \Vvert v^{(0)} \Vvert.   
\]
From Section~\ref{sect_localization_block} we know that for~$k \approx \log(1/\tol) / \log(1/\gap)$ steps of the inverse power method we have~$\Vvert V^{(k)}x - u_1 \Vvert \lesssim \gap^k \err^{(0)}$. 
Thus, with~$\gamma \lesssim \gap^k \approx \tol$ we conclude by the triangle inequality 
\[
  \Vvert \tilde V^{(k)} x - u_1 \Vvert
  \lesssim \Vvert e^{(k)} \Vvert + \Vvert V^{(k)}x - u_1 \Vvert 
  \lesssim \tol \err^{(0)}. 
\qedhere
\]
\end{proof}
Theorem~\ref{theorem-convergence-inexact-block} shows that the choice of $V^{(0)}$ is crucial for the localization result. 
First, its locality bounds the support of the constructed approximation of $u_1$. 
Second, the quality of the starting subspace enters the estimate through~$\err^{(0)}$, which can be bounded in terms of the matrix $\calC^{-1}$ and the energies of $V^{(0)}$, namely by 
\[
  \err^{(0)}
  = \Vvert V^{(0)} x - u_1 \Vvert 
  = \Vvert V^{(0)} \calC^{-1} e_1 - u_1 \Vvert 
  \lesssim \|\calC^{-1}\|_1 \max_{j=1,\dots,K}  \Vvert v_j^{(0)} \Vvert.
\]
The verification of the locality of $V^{(0)}$ and the boundedness of $\|\calC^{-1}\|_1$ depends on the considered potential and will be in the focus of Section~\ref{sect:gaps}. 
\begin{remark}
Note that the localization result of Theorem~\ref{theorem-convergence-inexact-block} is not optimal in the sense that the support of $\tilde V^{(k)}$ grows quadratically in $k$. 
To improve this, we need to show that a fixed number of preconditioner steps is sufficient as outlined in Remark~\ref{rem_smallGamma}. 
In a similar fashion as in Section~\ref{sect_localization_oneStep} we can show that for $\tilde V^{(k-1)}$ with corresponding coefficient matrix $\hat \calC\in \R^{K,K}$, $\hat x:= \hat \calC^{-1}e_1$, we have
\begin{align*}
	\min_{y\in\R^K} \Vvert \tilde V^{(k)}y - u_1 \Vvert
	\le \Vvert \tilde V^{(k)}\hat x - u_1 \Vvert 
	&\le (\gap + \gamma)\, \Vvert \tilde V^{(k-1)} \hat x - u_1 \Vvert. 
\end{align*}
Thus, we have a similar error reduction as in the non-block case if we can prove that~$\tilde V^{(k-1)} \hat x$ is at least a quasi-optimal approixmation of $u_1$ in the span of $\tilde V^{(k-1)}$. 
This, in turn, would imply that the ground state $u_1\in \calV$ decays exponentially fast in the sense of 
\[  
\Vvert u_1\Vvert_{D\setminus B^\infty_{k\eps}(\operatorname{supp} V^{(0)})}
\lesssim (\gap + \gamma)^{k}\, \err^{(0)}. 
\]
\end{remark}
\begin{remark}
\label{remark-on-theorem-convergence-inexact-block}
The result of Theorem~\ref{theorem-convergence-inexact-block} generalizes to the first $r$ eigenfunctions provided that the gap condition $E^r/E^{K+1} + \gamma<1$ holds true. For this, one needs to consider $\calB := E^r \calA^{-1} \calI$ and $x := \calC^{-1} e_r$.
\end{remark}
%
%
\section{Application to Prototypical Potentials}\label{sect:gaps}
This section aims to validate the assumption on the spectral gap in Theorem~\ref{theorem-convergence-inexact-block} in three model scenarios. 
This will turn out to fail in the periodic case. The lower part of the spectrum indeed decomposes into well separated eigenvalue clusters, but the clusters are too large. The introduction of disorder changes the picture. We first state two general results, which are needed in this context. 
\begin{lemma}
\label{lem_RayleighBound}
Recall the definition of the energy $E(v)$ from \eqref{def-Ev} and let $u_1, \dots, u_N \in \V$ be orthogonal w.r.t.~$(\cdot,\cdot)$ as well as $a(\cdot,\cdot)$. Furthermore, assume the uniform bounds
\[
  c_1 \le \E(u_i) \le c_2
\]
for all $i=1,\dots, N$. Then, $c_1 \le \E(u) \le c_2$ for all $u\in\sspan \{u_1, \dots, u_N \}$.
\end{lemma}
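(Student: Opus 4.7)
The plan is to expand an arbitrary element of the span in the basis $u_1,\ldots,u_N$ and exploit the double orthogonality to show that $E(u)$ is a convex combination of the individual energies $E(u_i)$.

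Concretely, I would write $u = \sum_{i=1}^N \alpha_i u_i$ with coefficients $\alpha_i\in\R$. Using the $L^2$-orthogonality of the $u_i$ in the denominator and the $a(\cdot,\cdot)$-orthogonality in the numerator, both cross terms vanish and one obtains
\[
  E(u) \;=\; \frac{a(u,u)}{\Vert u\Vert^2}
  \;=\; \frac{\sum_{i=1}^N |\alpha_i|^2\, a(u_i,u_i)}{\sum_{i=1}^N |\alpha_i|^2\, \Vert u_i\Vert^2}
  \;=\; \frac{\sum_{i=1}^N w_i\, E(u_i)}{\sum_{i=1}^N w_i},
\]
with non-negative weights $w_i := |\alpha_i|^2\, \Vert u_i\Vert^2$. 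The last identity uses that $a(u_i,u_i) = E(u_i)\,\Vert u_i\Vert^2$ by definition of $E(u_i)$.

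Since not all $\alpha_i$ vanish (otherwise $u=0$ and $E(u)$ is not defined by the silent convention stated after \eqref{def-Ev}), the denominator is strictly positive, and $E(u)$ is a genuine convex combination of the numbers $E(u_1),\ldots,E(u_N)$. The assumed uniform bounds $c_1 \le E(u_i) \le c_2$ then immediately yield $c_1 \le E(u) \le c_2$, which is the claim.

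There is no real obstacle here: the only thing to double-check is that the double orthogonality of the $u_i$ truly kills all cross terms in both the numerator and the denominator, which it does by hypothesis. No further structure of $a(\cdot,\cdot)$ or of the potential enters, so the lemma is a purely algebraic statement about Rayleigh quotients of simultaneously diagonalizable pairs of forms.
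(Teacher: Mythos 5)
Your proof is correct and follows essentially the same route as the paper: expand $u$ in the given basis, use the double orthogonality to kill all cross terms in both $a(u,u)$ and $\Vert u\Vert^2$, and apply the bounds $c_1\Vert u_i\Vert^2 \le a(u_i,u_i)\le c_2\Vert u_i\Vert^2$ termwise. Your convex-combination phrasing is just a slight repackaging of the paper's direct estimate of $\Vert u\Vert^2$ against $a(u,u)$, so there is nothing to add.
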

\begin{proof}
The assumption on the Rayleigh quotient of $u_i$ can be translated into 
\[
  c_1 \Vert u_i \Vert^2 
  \le a(u_i, u_i) 
  \le c_2 \Vert u_i \Vert^2 
\]
for all $i=1,\dots, N$. For a given linear combination $u:= \sum_{i=1}^N \alpha_iu_i \in \sspan \{u_1, \dots, u_N \}$ we get due to the assumed orthogonality
\[
  \Vert u \Vert^2
  = \big\Vert \sum\nolimits_{i=1}^n \alpha_i u_i \big\Vert^2
  = \sum\nolimits_{i=1}^n \alpha_i^2\, \Vert u_i\Vert^2
  \le \frac{1}{c_1} \sum\nolimits_{i=1}^n \alpha_i^2\, a(u_i, u_i)
  = \frac{1}{c_1}\, a(u,u)
\]
and thus, $c_1\le\E(u)$. Analogously, one shows that $\E(u)\le c_2$.
\end{proof}	
We emphasize that orthogonality in both inner products is especially given for functions having disjoint support. 
\begin{lemma}
\label{lem_evBound}
Assume that $u_1, \dots, u_N\in \V$ are pairwise orthogonal w.r.t.~$(\cdot,\cdot)$ and $a(\cdot,\cdot)$. If we have the property $\E(u_i)\le c$ for all $i=1,\dots, N$ then the eigenvalue problem 
\[
  a(u,v) = E\, (u,v)
\]
for test functions $v\in\V$ has at least $N$ eigenvalues with $E \le c$.
\end{lemma}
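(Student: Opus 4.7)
The plan is to invoke the Courant--Fischer min--max characterization of eigenvalues together with the previous lemma. Since $\calA$ is self-adjoint and coercive with compact resolvent (it is the operator associated with the continuous, coercive, symmetric form $a(\cdot,\cdot)$ on $\V$, and $\calI$ is compact by Rellich's theorem), the eigenvalues can be ordered as $E^1\le E^2\le\dots$ and the $N$-th eigenvalue admits the representation
\[
  E^N \;=\; \min_{\substack{W\subset\V\\ \dim W=N}}\; \max_{v\in W\setminus\{0\}} \E(v).
\]

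First I would observe that the functions $u_1,\dots,u_N$ are linearly independent: any non-trivial linear combination $u=\sum_i\alpha_i u_i$ has $\|u\|^2=\sum_i|\alpha_i|^2\|u_i\|^2>0$ by the $L^2$-orthogonality, so in particular none of the $u_i$ can vanish (otherwise $\E(u_i)$ would be undefined, and we may discard such trivial functions from the hypothesis without loss of generality). Hence the subspace $W:=\sspan\{u_1,\dots,u_N\}$ has dimension exactly $N$ and qualifies as a candidate in the min--max formula.

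Next I would apply Lemma~\ref{lem_RayleighBound} with the choice $c_1=0$ (using $\E(u_i)\ge 0$, which follows from the non-negativity of $a$) and $c_2=c$ to conclude that
\[
  \max_{v\in W\setminus\{0\}} \E(v) \;\le\; c.
\]
Plugging this into the min--max formula immediately gives $E^N\le c$, which means that the first $N$ eigenvalues $E^1\le\dots\le E^N$ all lie below $c$, as claimed.

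I do not anticipate any serious obstacle: the only point worth being careful about is the justification of the min--max principle in the Hilbert-space setting, which rests on the spectral structure of $\calA$ and $\calI$ already implicit in the paper's setup (the eigenbasis is used, e.g., in Section~\ref{sect_localization_power}). Once min--max is invoked, the statement reduces to the elementary uniform Rayleigh bound established in Lemma~\ref{lem_RayleighBound}.
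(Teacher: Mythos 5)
Your proposal is correct and follows essentially the same route as the paper: the Courant min--max principle applied to the span of the given functions, combined with the uniform Rayleigh-quotient bound from Lemma~\ref{lem_RayleighBound}. The only cosmetic difference is that you bound $E^N$ directly and invoke the ordering of the eigenvalues, whereas the paper bounds each $E^\ell$, $\ell\le N$, via the nested spans $\sspan\{u_1,\dots,u_\ell\}$; both arguments are equivalent.
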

\begin{proof}
The Courant min--max principle in Hilbert spaces states that the $\ell$-th eigenvalue satisfies
\[
  E^\ell 
  = \min_{\dim \V^{(\ell)} = \ell} 
	\hspace{4pt} 
	\max_{\ v\in \V^{(\ell)}} 
	\hspace{3pt}  \E(v). 
\]
Thus, for $\ell \le N$, the choice $\V^{(\ell)} := \sspan \{u_1, \dots, u_{\ell} \}$ yields together with the previous lemma 
\[
  E^\ell 
  \le \max\nolimits_{v\in \sspan \{u_1, \dots, u_{\ell} \}} E(v) 
  \le \max\nolimits_{v\in \sspan \{u_1, \dots, u_N \}} E(v) 
  \le c. \qedhere
\]
\end{proof}	
In the following we derive bounds for the spectral gaps of $\calH$, where we first investigate the case of periodic potentials. As we will see, in sufficiently disordered media, significant spectral gaps are expected to appear much earlier than in the periodic case, cf.~Figure~\ref{fig:spectra}. 
Note that, up to now, we have only assumed $\beta$ to be large (cf.~Assumption~\ref{ass_epsBeta}). For the proof of spectral gaps we also need $\alpha$ to be small. 
\begin{assumption}
\label{ass_epsAlpha}
The coefficient $\alpha$ is of moderate size in the sense that $\alpha \lesssim (\eps L)^{-2}$, i.e., the contrast satisfies $\beta/\alpha\gtrsim L^2$. 
\end{assumption}
%
%
\subsection{Periodic potential}\label{sect:gaps:periodic}
We aim to derive lower and upper eigenvalue bounds for the periodic case shown in Figure~\ref{fig_potentials} (upper left). This includes $N :=(2\eps)^{-d}$ cubes of side length~$\eps$, on which $V$ takes the value $\alpha$. Recall that we have $L=1$ in this case.
\subsubsection{Upper eigenvalue bounds}
We provide an upper bound on the first $\ell N$ eigenvalues by the construction of particular functions and Lemma~\ref{lem_evBound}. Restricted to a single element $q\in\calT$, on which the potential $V$ equals $\alpha$, we consider the standard Laplace eigenvalue problem with homogeneous Dirichlet boundary and shift $\alpha$. For this problem, eigenfunctions and -values are well-known \cite[Ch.~10.4]{Str08}. On $q$ the first $\ell$ eigenfunctions $\hat u_1, \dots, \hat u_\ell$ (extended by zero on $D$) satisfy the bound 
\[
E(\hat u_j) 
\le \alpha + \frac{\pi^2}{\eps^2} \big( \ell^2 + d-1 \big)
\lesssim \frac{\ell^2}{\eps^2},
\]
since $\alpha \lesssim (\eps L)^{-2}$ by Assumption~\ref{ass_epsAlpha}. As this holds true for each of the $N$ cubes in $\calQa$, Lemma~\ref{lem_evBound} yields  
\begin{align}
\label{eqn_periodic_upperBound}
  E^{N\ell} \lesssim \frac{\ell^2}{\eps^2}.
\end{align}
Note that we exploit here the orthogonality of the eigenfunctions on a single element $q$ of~$\calT$ and the orthogonality due to the disjoint support of the cubes. 
\subsubsection{Lower eigenvalue bounds}
In order to prove gaps in the spectrum, we also need lower bounds on the eigenvalues. For this, we use the reformulation of the min-max principle, namely the max-min principle. This is then combined with quasi-interpolation results from the theory of finite elements.
\begin{lemma}
\label{lem_periodic_lowBounds}
Assume the periodic setting with $L=1$ and $N=(2\eps)^{-d}$ cubes on which the potential $V$ equals $\alpha$, as before. If $\beta \gtrsim (\ell-1)^2 \eps^{-2}$, then it holds the estimate 
\begin{align}
\label{eqn_periodic_lowerBound}
  E^{N \ell^d+1} 
  \ \gtrsim\ \frac{(\ell-1)^{2}}{\eps^{2}}.
\end{align}
\end{lemma}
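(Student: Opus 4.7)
\emph{Approach.} The plan is to apply the Courant--Fischer max--min principle
\[
  E^k \;=\; \sup_{\substack{W\subset L^2(D)\\ \dim W=k-1}} \ \inf_{\substack{v\in\V\\ v\perp_{L^2} W}} \frac{a(v,v)}{\|v\|^2}
\]
with a trial subspace $W$ of dimension $N\ell^d$ tailored to the periodic potential, so that every $v\in\V$ orthogonal to $W$ in $L^2(D)$ satisfies $\E(v)\gtrsim(\ell-1)^2/\eps^2$. For each $\alpha$-cube $q\in\calTa$, I would take the first $\ell^d$ Neumann--Laplace eigenfunctions $\phi_{q,1},\dots,\phi_{q,\ell^d}$ on $q$, extended by zero outside $q$, and set $W_q:=\sspan\{\phi_{q,i}\}_{i=1}^{\ell^d}$, $W:=\bigoplus_{q\in\calTa} W_q$. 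Disjointness of supports yields $\dim W=N\ell^d$; the zero-extensions live only in $L^2(D)$ and not in $\V$, but this is harmless since the max--min principle accepts any $L^2(D)$-subspace as $W$.

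\emph{Spectral gap and quasi-interpolation on $\alpha$-cubes.} A short counting lemma gives the cube-level spectral gap $\mu_{\ell^d+1}(q)\ge\pi^2\ell^2/\eps^2$: the Neumann eigenvalues of $-\Delta$ on $[0,\eps]^d$ are $\pi^2|k|^2/\eps^2$ for $k\in\N_0^d$, and $|k|^2<\ell^2$ forces $k_i<\ell$ for every $i$, so there are at most $\ell^d=|\{0,\dots,\ell-1\}^d|$ Neumann eigenvalues strictly below this threshold. This is essentially the Bramble--Hilbert-type quasi-interpolation statement that the $L^2(q)$-projection onto $W_q$ approximates any $H^1(q)$-function at rate $\eps/\ell$. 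For $v\in\V$ with $v|_q\perp_{L^2(q)} W_q$, expanding $v|_q$ in the Neumann eigenbasis gives
\[
  \|v\|_{L^2(q)}^2 \;\le\; \frac{1}{\mu_{\ell^d+1}(q)}\,\|\nabla v\|_{L^2(q)}^2 \;\le\; \frac{\eps^2}{\pi^2\ell^2}\,\|\nabla v\|_{L^2(q)}^2,
\]
and summation over $q\in\calTa$ produces $\|v\|_{\Oa}^2\lesssim(\eps/\ell)^2\,a(v,v)$.

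\emph{Closing the estimate and main obstacle.} On $\Ob$, the hypothesis $\beta\gtrsim(\ell-1)^2\eps^{-2}$ produces $\|v\|_{\Ob}^2\le\beta^{-1}\|v\|_{V,\Ob}^2\lesssim(\eps/(\ell-1))^2\,a(v,v)$. Adding the two contributions,
\[
  \|v\|^2 \;\lesssim\; \frac{\eps^2}{(\ell-1)^2}\,a(v,v), \qquad \text{equivalently} \qquad \E(v)\gtrsim\frac{(\ell-1)^2}{\eps^2},
\]
whenever $v\perp_{L^2}W$, and the max--min principle with this $W$ delivers the claim. The only subtle step is the cube-counting lemma forcing $\mu_{\ell^d+1}(q)\ge\pi^2\ell^2/\eps^2$, together with a careful calibration of constants so that Assumption~\ref{ass_epsAlpha} (which in the periodic setting $L=1$ reduces to $\alpha\lesssim\eps^{-2}$) and the assumed lower bound on $\beta$ do not pollute the clean factor $(\ell-1)^2/\eps^2$; the remainder is plain summation and Rayleigh-quotient arithmetic.
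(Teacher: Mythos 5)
Your argument is correct: the max--min principle with $L^2$-constraint vectors is valid here (for a subspace $W\subset L^2(D)$ of dimension $k-1$, a dimension count inside the span of the first $k$ eigenfunctions gives $E^k\ge\inf\{\E(v):v\in\V,\ v\perp_{L^2}W\}$), the zero-extended Neumann eigenfunctions are admissible precisely because they only need to live in $L^2(D)$, the counting bound $\mu_{\ell^d+1}(q)\ge\pi^2\ell^2/\eps^2$ for the Neumann Laplacian on an $\eps$-cube is right, and the splitting of $\|v\|^2$ into the $\Oa$- and $\Ob$-contributions with $\beta\gtrsim(\ell-1)^2\eps^{-2}$ closes the estimate. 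Note that Assumption~\ref{ass_epsAlpha} is not even needed for this lower bound, since $\alpha\ge0$ is simply dropped from $a(v,v)$.

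Your route differs from the paper's in the choice of the $N\ell^d$-dimensional constraint space and the key local lemma. The paper builds a conforming $Q_1$ finite element space $\W\subset\V$ on a sub-mesh of width $h=\eps/(\ell-1)$ with nodes in $\overline{\Oa}$, defines a complement $\W^{\text{c}}=\ker\Pi$ via a Scott--Zhang quasi-interpolation $\ISZ$ acting only on $\Oa$, and uses the max--min principle in its complementary-space form together with the interpolation error bound $\|u-\Pi u\|_{\Oa}\lesssim h\|\nabla u\|_{\Oa}$; you instead span the constraint space by the first $\ell^d$ Neumann eigenfunctions of each $\alpha$-cube (extended by zero, hence non-conforming but admissible as $L^2$ constraints) and replace the quasi-interpolation estimate by the explicit Neumann spectral gap on a cube. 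Your version is more elementary and self-contained, gives the explicit constant $\pi^2\ell^2/\eps^2$ on the $\alpha$-part, and sidesteps the slightly delicate discussion of complementary spaces and the $a$-stability of $\Pi$. What the paper's construction buys is reusability: the same Scott--Zhang/projection machinery (local meshes of width $\eps\tilde\ell$, kernels as complement spaces, control of the projection in the surrounding $\beta$-layer) is exactly what is deployed again for the tensor-product and domino potentials in Sections~\ref{sect:gaps:random} and~\ref{sect:gaps:domino}, where the valleys are general cuboids and a conforming local space is also needed later as the starting subspace $V^{(0)}$, whereas explicit eigenfunction counts would have to be redone geometry by geometry.
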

\begin{proof}
We apply the max-min eigenvalue characterization of the form  
\begin{align}
\label{eqn_lambda_Nell}
  E^k 
  \ =\ \adjustlimits \max_{\dim\V^{(k-1)}=k-1\ \ } \min_{v \in [\V^{(k-1)}]^\text{c}}\ E(v), 
\end{align}
where $[\V^{(k-1)}]^\text{c} \subset \V$ is any complementary space to the $(k-1)$-dimensional space $\V^{(k-1)}$.
The strategy is to construct a subspace $\W \subseteq \V$ of dimension $N \ell^d$. For this, we consider a uniform triangulation $\calT_h$ of $D$ into cubes with local mesh size $h:=\eps/(\ell-1)$. The corresponding set of nodes is denoted by $\mathcal{N}_h$. The finite element space $\W$ is then defined by the span of all $Q_1$-basis functions corresponding to the nodes in $\mathcal{N}_h \cap \overline{\Oa}$. Note that the dimension of $\W$ equals $N \ell^d$ and that functions in $\W$ may have a slight support in $\Ob$, namely one layer of width $h$ surrounding the $\alpha$-valleys. 

To characterize a complementary space we construct a local  projection operator
\[
  \Pi\colon \V = \Hper \to \W 
\]
and define $\W^\text{c} := \ker \Pi$. The operator is based on the Scott-Zhang quasi-interpolation operator $\ISZ\colon H^1(\Oa)\rightarrow \W\vert_{\Oa}$, cf.~\cite{ScoZ90, HeuS07}. The operator $\Pi$ is then defined by the property 
$$(\Pi u)\vert_{\Oa} = \ISZ (u\vert_{\Oa})$$
in a unique way. Since $\ISZ$ does not depend on the behavior of functions in $\Ob$, $\Pi$ has the important property that information is not spread from $\Oa$ to $\Ob$. By the properties of the Scott-Zhang interpolation, this implies the $a$-stability of $\Pi$ with a constant depending only on $\beta h^2$. Due to $\V= \ker \Pi \oplus \range \Pi$, $\W^\text{c}$ is indeed a  closed complementary space. Moreover, by construction we have the error estimate
\[
  \Vert u - \Pi u\Vert_{\Oa}=\Vert u - \ISZ u\Vert_{\Oa}
  \lesssim h\, \Vert \nabla u \Vert_{\Oa}.
\]
For $u \in\W^\text{c}$ we thus have by the assumption $\beta \gtrsim (\ell-1)^2 \eps^{-2}=h^{-2}$ that
\[
  \Vert u\Vert^2 
  = \Vert u - \Pi u\Vert^2_{\Oa} + \Vert u\Vert^2_\Ob  
  \lesssim h^2 \Vert \nabla u \Vert^2_{\Oa} + \frac 1\beta \Vert u\Vert^2_{V, \Ob}
  \lesssim \frac{\eps^2}{(\ell-1)^2}\, \Vvert u \Vvert^2.
\]
This directly results in the estimate
\[
  E^{N\ell^d+1} 
  = \adjustlimits \max_{\dim\V^{(N\ell^d)}=N\ell^d\ } \min_{v \in [\V^{(N\ell^d)}]^\text{c}}\, E(v)
  \ \ge\ \min_{v \in \W^\text{c}}\, E(v)
  = \min_{v \in \W^\text{c}} \frac{\Vvert v \Vvert^2}{\Vert v \Vert^2}
  \ \gtrsim\ \frac{(\ell-1)^{2}}{\eps^{2}}.
  \qedhere
\]
\end{proof}	
\subsubsection{Spectral gaps}
The combination of the above estimates shows that there will be a spectral gap of order $\calO(1)$ after the first $\calO(N)$ eigenvalues with $N = (2\eps)^{-d}$. We formulate this result in form of a corollary. 
\begin{corollary}
Assume $\beta \gtrsim (\ell-1)^2 \eps^{-2}$ for a natural number $\ell>1$ and Assumption~\ref{ass_epsAlpha}. Then, the periodic setting leads to a spectral gap of the form 
\[
  \frac{E^{N k}}{E^{N \ell^d+1}}
  \lesssim \frac{k^2}{(\ell-1)^2}
\]	
for natural numbers $\ell>k\ge 1$. In particular, we have $E^1 / E^{N \ell^d+1} \lesssim (\ell-1)^{-2}$.
\end{corollary}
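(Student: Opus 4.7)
The plan is to combine the upper eigenvalue estimate \eqref{eqn_periodic_upperBound} with the lower eigenvalue estimate \eqref{eqn_periodic_lowerBound} from Lemma~\ref{lem_periodic_lowBounds}; no new construction is needed. The corollary is essentially a bookkeeping exercise that matches the numerator bound to the denominator bound, with the $\eps^{-2}$-factors cancelling.

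First, I would apply the upper bound~\eqref{eqn_periodic_upperBound}, which was derived for an arbitrary natural number, with the parameter~$k$ (instead of~$\ell$). Since $k$ is assumed to be a natural number with $k<\ell$ and since Assumption~\ref{ass_epsAlpha} is in force, the construction of test functions on the $N$ individual $\alpha$-cubes together with Lemma~\ref{lem_evBound} yields $E^{Nk}\lesssim k^2/\eps^2$. Second, the hypothesis $\beta\gtrsim(\ell-1)^2\eps^{-2}$ is exactly what is required to invoke Lemma~\ref{lem_periodic_lowBounds}, giving $E^{N\ell^d+1}\gtrsim (\ell-1)^2/\eps^2$. Dividing the two estimates, the $\eps^{-2}$-factors cancel and produce the desired ratio
\[
\frac{E^{Nk}}{E^{N\ell^d+1}}\ \lesssim\ \frac{k^2/\eps^2}{(\ell-1)^2/\eps^2}\ =\ \frac{k^2}{(\ell-1)^2}.
\]

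For the particular case $k=1$ stated at the end of the corollary, I would simply use the monotonicity $E^1\le E^N$ together with the upper bound~\eqref{eqn_periodic_upperBound} evaluated for $\ell=1$, namely $E^N\lesssim 1/\eps^2$, and then divide by the lower bound as before.

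There is no genuine obstacle here: both ingredients have already been established in full generality. The only points one has to be careful about are that the hypothesis on $\beta$ is precisely the one required by Lemma~\ref{lem_periodic_lowBounds}, and that the condition $\ell>k$ ensures the ratio is genuinely a spectral gap bound (i.e.\ bounded away from~$1$, and tending to~$0$ as $\ell$ grows). The interpretation, as explained in the discussion preceding the corollary, is that even in the perfectly periodic setting the lower part of the spectrum decomposes into well separated eigenvalue clusters, but each cluster contains at least $N=(2\eps)^{-d}$ eigenvalues, which is precisely the reason why the localization mechanism of Section~\ref{sect_localization} cannot be exploited in this case; the subsequent subsections then show that genuine disorder shrinks the clusters drastically.
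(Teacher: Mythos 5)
Your proposal is correct and follows exactly the route the paper intends: the corollary is stated as the immediate combination of the upper bound \eqref{eqn_periodic_upperBound} (applied with $k$ in place of $\ell$) and the lower bound of Lemma~\ref{lem_periodic_lowBounds}, with the $\eps^{-2}$ factors cancelling, and your treatment of the case $E^1$ via $E^1\le E^{N}$ is consistent with taking $k=1$ in the general estimate.
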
	
This result shows that the absence of disorder leads to large eigenvalue blocks and thus, no locality of eigenfunctions can be shown. 
%
%
\subsection{Tensor product potential}\label{sect:gaps:random}
We consider a potential $V$, which generalizes the periodic setup from the previous subsection and is a special case of the general random potential studied in Sections~\ref{sec:evp}-\ref{sect_localization}. This includes different valley formations, namely cuboids of varying side lengths, cf.~Figure~\ref{fig_potentials} (upper right). We follow the ideas of the periodic setting but need to adjust the construction of the finite element space $\W$ in order to prove a spectral gap. 
\subsubsection{Description of the potential}\label{sect:gaps:random:pot}
We consider a potential $V$, composed of one-dimensional potentials. For this, define $V_1, \dots, V_d \in L^\infty(0,1)$, each based on an $\eps$-partition of $(0,1)$ with values $0$ and $1$ only. Then, $V \in L^\infty(D)$ is given by 
\[
  V(x) := \beta + (\alpha-\beta) \big[V_1(x_1)\cdot\dots\cdot V_d(x_d) \big].
\]
Note that this construction provides non-overlapping $\alpha$-valleys in form of cuboids, each surrounded by at least one $\eps$-layer of $\beta$ values. We characterize the cuboids by their {\em shortest} side length and $N_j$ denotes the number of such valleys with minimal side length~$\eps j$. We define $D_{\alpha,j}$ as the union of these $N_j$ valleys with width~$\eps j$. The maximal (shortest) side length of an $\alpha$-valley is $\eps L$ and thus, $N_L\ge 1$. Furthermore, we need a bound for the possible anisotropy of $\alpha$-valleys of a certain size. Let $\rho_{\tilde \ell}$ denote the maximal quotient of maximal and minimal side length of all $\alpha$-valleys with width greater or equal to $\tilde{\ell}$. 
\begin{remark}
With the presented construction of $V$ we obtain the periodic potential of Section~\ref{sect:gaps:periodic} by the choice $V_j = [\, 1,\, 0,\, 1,\,0,\, \dots,1,\, 0\, ]$ for all $j=1,\dots, d$.
\end{remark}

\begin{remark}
\label{rem_expectation_Nell}
In the range $\ell \approx \log(1/\eps)$ the expectation of $N_\ell$ is of order $(2^{-\ell}/\eps)^d$ and thus, of order $\calO(1)$. In this setting, we also have $\rho_{\tilde \ell} = \calO(1)$ with high probability.
\end{remark}
Although the given setting is more obscure than the periodic case, we will show that there is -- with high probability -- a spectral gap already after $\calO(\eps^{-p})$ with $p<d$, instead of $\calO(\eps^{-d})$ eigenvalues. To prove this, we need to exploit the disorder of the potential. 
The overall strategy is to prove upper bounds for the first $N_L$ eigenvalues and lower bounds for higher energy levels. For this, we apply once more the max-min principle and construct a specific finite-dimensional subspace based on the valley formations of the potential. 
\subsubsection{Quasi-interpolation operator}\label{sect:gaps:random:quasiInt}
To obtain lower bounds on the eigenvalues we construct a finite element subspace and a corresponding quasi-interpolation operator similar to Section~\ref{sect:gaps:periodic}. For this, we fix a level $1<\tilde\ell<L$ and consider partitions of all sets $D_{\alpha,j}$ with $j>\tilde{\ell}$, using a mesh size $h \approx \eps\tilde\ell$, cf.~the illustration in  Figure~\ref{fig_lowerBound_hat}. 

More precisely, on an $\alpha$-valley $Q$ with dimensions $\eps j_1\le\dots\le \eps j_d$ and $\tilde\ell<j_1$ we consider a Cartesian mesh defined by $\Pi_{k=1}^d \bigl(\lfloor j_k / \tilde\ell \rfloor +2\bigr)$ equally distributed nodes. We extend this mesh by one more layer of elements of width $\eps/2$ into the surrounding $\beta$ region, cf.~Figure~\ref{fig_lowerBound_hat}.  A local projection operator $\Pi_{\tilde{\ell}}$ onto $Q_1$-basis functions is defined by prescribing values at nodes in $\overline{Q}$ for each $\alpha$-valley with minimal side length $j>\tilde{\ell}$ via 
$$(\Pi_{\tilde \ell} u)\vert_Q=\ISZ (u\vert_Q)$$ and enforcing vanishing traces at the boundary of each local mesh to ensure $\V$-conformity.

\begin{figure}
\begin{center}
\begin{tikzpicture}[scale=0.2]
%
\draw[red] (14.5, 0) -- (16, 5) -- (20, 0);
\draw (16, 0) -- (20, 5) -- (24, 0);
\draw[red] (20, 0) -- (24, 5) -- (25.5, 0);
\draw[red] (34.5, 0) -- (36, 5) -- (40, 0);
\draw (36, 0) -- (40, 5) -- (44, 0);
\draw (40, 0) -- (44, 5) -- (48, 0);
\draw (44, 0) -- (48, 5) -- (52, 0);
\draw (48, 0) -- (52, 5) -- (56, 0);
\draw[red] (52, 0) -- (56, 5) -- (57.5, 0);

%
\node at (8.5, -1.5) {$\eps\tilde\ell$};
\node at (20, -1.5) {$\eps(\tilde\ell+1)$};
\node at (46, -1.5) {$\eps L$};
\node at (38, -1.8) {\small $h$};
\draw[decorate,decoration={brace}] (40, -0.35) -- (36, -0.35);
\node[red] at (35.25, -1.9) {\small $\tfrac \eps 2$};
\draw[red,decorate,decoration={brace}] (36, -0.35) -- (34.5, -0.35);
%
\draw[very thick] (6, 0) -- (11, 0);
\draw[thin] (14, 0) -- (26, 0);
\draw[very thick] (16, 0) -- (24, 0);
\draw[thin] (34, 0) -- (58, 0);
\draw[very thick] (36, 0) -- (56, 0);
\foreach \x in {6, 11, 16, 24, 36, 56} {
	\draw[very thick] (\x, -0.4) -- (\x, 0.4);
}
\node at (30, 0) {\Large $\dots$};
\end{tikzpicture} 
\end{center}
\caption{Basis functions of $\W_{\tilde{\ell}}$ on large $\alpha$-valleys with mesh size $h \approx \eps \tilde{\ell}$ in the one-dimensional setting. The basis functions at the boundary are slightly deformed. }
\label{fig_lowerBound_hat}
\end{figure}
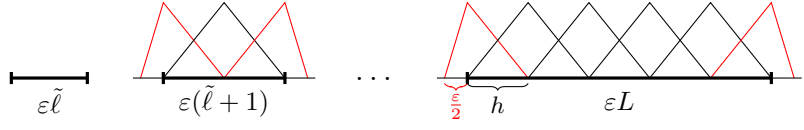
The image of $\Pi_{\tilde \ell}$ defines the finite element space $\W_{\tilde{\ell}}$ with a dimension bounded by 
\begin{align}
\label{def_K}
  \dim \W_{\tilde{\ell}}
  \, =:\, K 
  \, \lesssim\, \rho_{\tilde \ell}^{d-1}\sum\nolimits_{j=\tilde \ell+1}^L N_j\, \big\lfloor j /{\tilde\ell} \big\rfloor^{d}.
\end{align}
We emphasize that $\rho_{\tilde \ell}$ enters the estimate as we characterized the $\alpha$-valleys by means of their shortest side length. 

As in the periodic case, the kernel of the projection operator $\Pi_{\tilde{\ell}}$ defines  an appropriate complement space $\W_{\tilde{\ell}}^\text{c}$ to be used in connection with eigenvalue estimates via the max-min characterization. 
Finally, we derive for $u\in \V$ an approximation estimate of the form
\begin{equation}
\label{eq_qi}
  \Vert u - \Pi_{\tilde{\ell}} u\Vert 
  \, \lesssim\, \eps\tilde \ell\, \Vvert u \Vvert. 
\end{equation}
To see this, we split the left-hand side into
\begin{align*}
  \Vert u - \Pi_{\tilde{\ell}} u \Vert^2
  \le \sum\nolimits_{j\le \tilde\ell}\, \Vert u \Vert_{D_{\alpha,j}}^2 
     + \sum\nolimits_{j> \tilde\ell}\, \Vert u - \ISZ u \Vert_{D_{\alpha,j}}^2 
     + 2\, \Vert u \Vert^2_{\Ob} + 2\, \Vert \Pi_{\tilde{\ell}} u \Vert^2_{\Ob}.
\end{align*}
For small $\alpha$-valleys with width $j\le \tilde{\ell}$ we apply the Poincar\'e-Friedrich's inequality to~$\eta u$, where $\eta$ denotes a cut-off function similarly as in Section~\ref{sec:evp:cutoff}. For that we note that $\eta u \in H^1_0(\hspace{1pt}N(D_{\alpha,j})\hspace{1pt})$, where $N(D_{\alpha,j})$ equals $D_{\alpha,j}$ extended by a surrounding $\tfrac \eps 2$-layer. With this, the Poincar\'e-Friedrich's inequality yields
\begin{equation}
\label{eq_smallValleys}
  \Vert u \Vert_{D_{\alpha,j}} 
  \le \Vert \eta u \Vert_{N(D_{\alpha,j})}
  \lesssim  \eps\tilde\ell\  \Vert \nabla (\eta u) \Vert_{N(D_{\alpha,j})}
  \lesssim\, \eps\tilde\ell\ \Vvert u \Vvert_{N(D_{\alpha,j})}.
\end{equation}
In the last step we have used the same arguments as in the proof of estimate \eqref{estimate_L2norm}.

On $\alpha$-valleys with width $j> \tilde{\ell}$ we can directly apply the approximation property of the Scott-Zhang operator \cite{HeuS07}. 
Finally, for $T$ being an element of the $\beta$-region of width~$\tfrac\eps 2$, we show that $\Vert \Pi_{\tilde \ell}\, u \Vert_T \lesssim \eps\tilde\ell\, \Vvert u \Vvert_{N(T)}$. In this case, $N(T)\subset \Ob$ denotes the union of $T$ and all its neighbors within the $\tfrac \eps 2$-layer in $\Ob$, surrounding an $\alpha$-valley. Note that, due to the construction of the operator $\Pi_{\tilde \ell}$, we need to estimate $u$ along edges $E$. Using the trace identity of \cite{CarGR12}, we get for an edge $E$ of $T$,  
\[
  \frac {1}{|E|} \int_E u \dx
  \le \frac{1}{|T|} \int_T |u| \dx + \frac{\eps\tilde\ell}{|T|} \int_T |\nabla u| \dx
  \le \frac{1}{|T|^{1/2}} \Vert u\Vert_{T} + \frac{\eps\tilde\ell}{|T|^{1/2}} \Vert \nabla u \Vert_T.
\]
Considering appropriate edges, such estimates lead to 
\[
  \Vert \Pi_{\tilde \ell}\, u \Vert_T^2
  \lesssim |T|\, \sum_{\tilde T \in N(T)} \frac{1}{|\tilde T|} \Big( \Vert u\Vert^2_{\tilde T} + (\eps\tilde\ell)^2 \Vert \nabla u \Vert^2_{\tilde T} \Big)
  \le \sum_{\tilde T \in N(T)} (\eps\tilde\ell)^2\, \Vvert u \Vvert^2_{\tilde T}
  \le (\eps\tilde\ell)^2\, \Vvert u \Vvert^2_{N(T)}.
\]
Here we used again that $N(T) \subset \Ob$ and that $\beta^{-1} \lesssim \eps^2$.
\subsubsection{Eigenvalue bounds}\label{sect:gaps:random:bounds}
As in the periodic setting, we consider eigenfunctions of the shifted Laplace eigenvalue problem with homogeneous Dirichlet boundary conditions on the $N_L$ $\alpha$-valleys with width $\eps L$. On such a valley we know that the first eigenvalue equals $\alpha + d\pi^2/(\eps L)^2$. If we extend the corresponding eigenfunctions by zero, we obtain $N_L$ functions $\hat u_1, \dots, \hat u_{N_L} \in\V$ for which we have $E(\hat u_j)= \alpha + d\pi^2/(\eps L)^2$. With $\alpha \lesssim (\eps L)^{-2}$ from Assumption~\ref{ass_epsAlpha}, we obtain by Lemma~\ref{lem_evBound} the eigenvalue bound 
\[
  E^{N_L} \lesssim \frac{1}{(\eps L)^2}. 
\]
To ensure a spectral gap, we also need lower bounds on the eigenvalues. Using \eqref{eq_smallValleys} and the approximation property of $\ISZ$, we estimate for $u \in \W_{\tilde{\ell}}^\text{c}$, i.e., $\Pi_{\tilde\ell} u = 0$, 
\begin{align}
\label{lowerbound-E-complementary-space}
  \Vert u \Vert^2
  &= \sum\nolimits_{j\le \tilde\ell}\, \Vert u \Vert_{D_{\alpha,j}}^2 
    + \sum\nolimits_{j> \tilde\ell}\, \Vert u - \ISZ u \Vert_{D_{\alpha,j}}^2 
    + \Vert u \Vert^2_{\Ob} \\
  \nonumber&\lesssim (\eps\tilde\ell)^2\, \sum\nolimits_{j\le \tilde\ell}\, \Vvert u \Vvert^2_{N(D_{\alpha,j})} 
    + (\eps\tilde\ell)^2\, \sum\nolimits_{j> \tilde\ell}\, \Vert \nabla u \Vert_{D_{\alpha,j}}^2 + \frac{1}{\beta} \Vert u \Vert^2_{V, D_\beta} 
  \lesssim (\eps\tilde\ell)^2\, \Vvert u \Vvert^2. 
\end{align}
Note that we have used here once more Assumption~\ref{ass_epsBeta}. The application of the max-min eigenvalue characterization in \eqref{eqn_lambda_Nell} then proves 
\[
  E^{K+1}
  = \adjustlimits \max_{\dim\V^{(K)}=K\ } \min_{v \in [\V^{(K)}]^\text{c}}\, E(v)
  \ \ge\ \min_{v \in \W_{\tilde{\ell}}^\text{c}}\, E(v)
  \ \gtrsim\  \frac{1}{(\eps\tilde\ell)^{2}}.
\]	
A combination of the lower and upper bounds of the eigenvalues yields a guaranteed spectral gap of order $\calO(1)$ within the first $K+1$ eigenvalues, if $\tilde\ell$ is sufficiently small compared to $L$. 

\begin{corollary}
\label{cor_generalCase}
In the considered setting including Assumptions~\ref{ass_epsBeta} and~\ref{ass_epsAlpha} we have an estimate of the form 
\begin{equation}\label{eq_ellleqL}
  \frac{E^{N_L}}{E^{K+1}}
  \ \le\ c_1 \frac{\eps^2\tilde\ell^2}{\eps^2 L^2} 
  \ =\ c_1 \frac{\tilde\ell^2}{L^2} =: q,
\end{equation}
for some generic constant $c_1$. Thus, for sufficiently small $\tilde \ell$, i.e. $\tilde{\ell}^2 \le q \hspace{2pt}c_1^{-1} L^2 $ with $0<q<1$, we obtain a spectral gap of size $q<1$.
\end{corollary}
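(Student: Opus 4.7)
The plan is to combine the two eigenvalue bounds that were already essentially established in Section~\ref{sect:gaps:random:bounds}. The corollary is then little more than an arithmetic consequence, and the delicate work (the construction of $\W_{\tilde\ell}$ and the quasi-interpolant $\Pi_{\tilde\ell}$, together with the bulk Friedrichs-type estimate~\eqref{eq_smallValleys} and the complement estimate~\eqref{lowerbound-E-complementary-space}) has already been carried out.

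First, I would recall the upper bound on $E^{N_L}$. The $N_L$ maximal $\alpha$-valleys $Q_1,\dots,Q_{N_L}$ of width $\eps L$ are pairwise disjoint, so the shifted-Laplace ground states $\hat u_j\in H^1_0(Q_j)$ extended by zero to $D$ are mutually orthogonal in both $(\cdot,\cdot)$ and $a(\cdot,\cdot)$. Their common Rayleigh quotient equals $\alpha + d\pi^2/(\eps L)^2$, which by Assumption~\ref{ass_epsAlpha} is bounded by a constant multiple of $(\eps L)^{-2}$. Lemma~\ref{lem_evBound} then yields $E^{N_L}\le C_1/(\eps L)^2$.

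Next, I would recall the lower bound on $E^{K+1}$. With $K=\dim\W_{\tilde\ell}$ as in~\eqref{def_K} and the complement $\W_{\tilde\ell}^{\text{c}}=\ker\Pi_{\tilde\ell}$, the estimate~\eqref{lowerbound-E-complementary-space} gives $\|v\|^2\lesssim(\eps\tilde\ell)^2\,\Vvert v\Vvert^2$ for all $v\in\W_{\tilde\ell}^{\text{c}}$. Inserting $\W_{\tilde\ell}$ as the distinguished $K$-dimensional subspace in the max-min characterization of $E^{K+1}$ yields
\[
E^{K+1}\;\ge\;\min_{v\in\W_{\tilde\ell}^{\text{c}}}\frac{\Vvert v\Vvert^2}{\|v\|^2}\;\ge\;\frac{C_2}{(\eps\tilde\ell)^2}.
\]
Note that since $\W_{\tilde\ell}$ contains at least one basis function supported in each maximal $\alpha$-valley of width $\eps L$, we have $K\ge N_L$, so the ratio $E^{N_L}/E^{K+1}$ is indeed a ratio across a potential gap.

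Dividing the two bounds, I obtain
\[
\frac{E^{N_L}}{E^{K+1}}\;\le\;\frac{C_1/(\eps L)^2}{C_2/(\eps\tilde\ell)^2}\;=\;\frac{C_1}{C_2}\,\frac{\tilde\ell^2}{L^2}\;=:\;c_1\,\frac{\tilde\ell^2}{L^2},
\]
which is precisely~\eqref{eq_ellleqL}. Finally, prescribing $0<q<1$ and choosing $\tilde\ell$ so small that $\tilde\ell^2\le qc_1^{-1}L^2$ makes the right-hand side at most $q$, producing the asserted spectral gap. There is no real obstacle at this final stage; all effort has been spent upstream, in ensuring that the constants in both eigenvalue bounds are independent of the potential parameters (beyond the already-used Assumptions~\ref{ass_epsBeta} and~\ref{ass_epsAlpha}), and in particular that they depend only on dimension and on the geometric quantities already contained in $L$, $\tilde\ell$, and the bounded anisotropy factor $\rho_{\tilde\ell}$ absorbed into $K$.
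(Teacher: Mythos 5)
Your proposal is correct and follows the paper's own route: the corollary is obtained exactly by combining the upper bound $E^{N_L}\lesssim(\eps L)^{-2}$ from Lemma~\ref{lem_evBound} applied to the shifted-Laplace ground states on the $N_L$ largest valleys with the lower bound $E^{K+1}\gtrsim(\eps\tilde\ell)^{-2}$ from the max-min principle and estimate~\eqref{lowerbound-E-complementary-space}, then dividing. Your added remark that $K\ge N_L$ is a harmless sanity check not needed for the inequality itself.
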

\begin{remark}
\label{rem_Kissmall}
Corollary~\ref{cor_generalCase} shows that any choice $\tilde{\ell}^2 < L^2 / c_1$ ensures a reasonable spectral gap~$q<1$. 
Here, $c_1>0$ is the multiplicative constant in \eqref{eq_ellleqL}. On the other hand, we also need to ensure that $K=\dim\W_{\tilde \ell}$ is sufficiently small.
Since the probability of an $\alpha$-valley to be of size $\eps L$ is of the order $(2^{-L}/\eps)^d$, $L$ is with high probability larger than $\log(c_2/\eps)$ with some other constant $c_2>0$ that has an expectation of $c_2=1/4$. 
Next, we choose $\tilde{\ell}= \lceil L/\sqrt{q^{-1} c_1}\rceil$, leading to $K = \calO(\eps^{-p})$ for $p<d$ (with high probability). 
To detail this claim, recall that $N_{\tilde{\ell}} \lesssim (2^{-\tilde \ell}/\eps)^d$ and $\rho_{\tilde \ell} \lesssim 1$ with high probability, cf.~Remark~\ref{rem_expectation_Nell}. Hence, with $c_q:= \sqrt{q^{-1} c_1}$ we have 
\[
  K 
  \lesssim \sum\nolimits_{j=\tilde \ell+1}^L N_j 
  \lesssim \eps^{-d} 2^{-\tilde \ell d}
  \le \eps^{-d} 2^{-Ld / c_q }  
  = c_2^{-d / c_q } \eps^{-d} \eps^{d / c_q }
  = c_2^{-d / c_q } \eps^{-d \frac{c_q-1}{c_q}} = \mathcal{O}(\eps^{-p}), 
\]
where $p := d \frac{c_q-1}{c_q} < d$.
Note that in this setting, $D_{\alpha,j}$ is the union of $\calO(\eps^{-p})$ cuboids of side length $\eps j$ for $j>\tilde{\ell}$. As a result, $\W_{\tilde \ell}$ is a local space that covers a domain that has a volume of order $\calO(\eps^{d-p})$ up to logarithmic terms, where $d-p>0$. Hence, the support of functions in $\W_{\tilde \ell}$ is asymptotically vanishing for $\eps \rightarrow 0$.
\end{remark}
\begin{remark}
Numerical examples indicate that -- in the presence of disorder -- $K$ is even $\calO(1)$. 
In the one-dimensional example presented in Figure~\ref{fig:spectra}, he have $\gap\le\frac 12$ with $K=2$.
\end{remark}
For the construction of a suitable set of starting functions  $V^{(0)}$ in the next subsection, we also need an upper bound on $E^K$. In an $\alpha$-valley $Q \subset D_{\alpha, j}$, $j>\tilde{\ell}$, we consider the first $r$ eigenvalues of the shifted Laplacian on $Q$, where $r$ equals the number of degrees of freedom in $\W_{\tilde{\ell}}$, associated with the $\alpha$-valley $Q$. Note that $r$ depends on the anisotropy of the given valley characterized by $\rho_{\tilde \ell}$. Then, $Q$ having the dimensions $\eps j_1 \le \dots \le \eps j_d$ with $j=j_1>\tilde \ell$ implies $j_d/j_1 \le \rho_{\tilde \ell}$ and 
\[
  r 
  \, \approx\,  \frac{j_1 \cdot \dots \cdot j_d}{\tilde \ell^d}
  \, \le\, \rho_{\tilde \ell}^{d-1} \frac{j^d}{\tilde \ell^d}.
\]
Extending these $r$ eigenfunctions by zero, we obtain (considering all $\alpha$-valleys) in total~$K$ orthogonal functions with an energy bounded by 
\[
  \alpha + \frac{\pi^2}{(\eps j)^2} r^{2/d}
  \, \lesssim\, \frac{1}{(\eps j)^2} \frac{j^2}{\tilde \ell^2} \rho_{\tilde \ell}^{2(d-1)/d}
  \, \le\, \rho_{\tilde \ell}^{4/3} \frac{1}{(\eps \tilde \ell)^2}  
\]
and thus, by Lemma~\ref{lem_evBound}, 
\begin{equation}
\label{rem:upperbound_K}
  E^K \lesssim \rho_{\tilde \ell}^{4/3} (\eps \tilde{\ell})^{-2}.
\end{equation}
\subsubsection{Starting subspace}\label{sect:gaps:random:starting}
To apply the convergence result of Theorem~\ref{theorem-convergence-inexact-block} in the present setting, it remains to construct a suitable set of starting functions $V^{(0)}$ with an invertible matrix $\calC$, cf.~Section~\ref{sect_localization_block}. 
Here the idea is to extend the space $\W_{\tilde \ell}$ from Section~\ref{sect:gaps:random:quasiInt} by a few levels of $\alpha$-valleys such that it remains local and then project the first eigenfunctions into this space. 

We introduce the finite element space $\W_{\tilde m}$, which is defined as $\W_{\tilde \ell}$ but considers all $\alpha$-cuboids with (minimal) side length $\eps j$, $j> \tilde m$, for some parameter $\tilde m < \tilde \ell$. 
The local mesh size then equals~ $h\approx \eps\tilde m$. 
Thus, the extension enlarges the number of $\alpha$-valleys but also refines the previous mesh. The corresponding quasi-interpolation operator reads $\Pi_{\tilde m}\colon \V \to \W_{\tilde m}$ and the dimension of $\W_{\tilde m}$ is bounded by 
\[
  \dim \W_{\tilde \ell}
  \, \le\, \dim \W_{\tilde m}
  \, =:\, K_{\tilde m} 
  \, \lesssim\, \rho_{\tilde m}^{d-1}\sum\nolimits_{j=\tilde m+1}^L N_j\, \big\lfloor j /{\tilde m} \big\rfloor^{d}.
\]
Recall that $N_j$ denotes the number of $\alpha$-cuboids with minimal side length $\eps j$.

We show that this construction leads to an invertible matrix~$\calC$. 
\begin{lemma}
\label{lem:startingV}
Consider a tensor product potential as described in Section~\ref{sect:gaps:random:pot} together with Assumptions~\ref{ass_epsBeta} and~\ref{ass_epsAlpha}. 	
Further, we define the starting space $V^{(0)}$ as the $L^2$-projection $P_{\tilde{m}}$ of the first $K$ eigenfunctions into $\W_{\tilde m}$, i.e., we set
$$
v_j^{(0)} := P_{\tilde{m}} u_j,
$$
where $u_j$ denote the first (normalized) eigenfunctions, $j=1,\dots, K$. This choice leads to an invertible matrix $\calC$ with~$\| \calC^{-1} \|_1 \lesssim \eps^{-r}$ for some power $r>0$. Hence, it is uncritical for the localization.
\end{lemma}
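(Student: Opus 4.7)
The plan is to exploit the fact that $P_{\tilde m}$ is a genuine orthogonal projection, so the coefficient matrix $\calC$ should be a small perturbation of the identity when the $L^2$-projection accurately reproduces the first $K$ eigenfunctions.

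First, I would compute the entries of $\calC$ explicitly. Since $\{u_i\}$ is $L^2$-orthonormal, the coefficients in the expansion $v_j^{(0)}=\sum_i \alpha_{i,j}u_i$ are simply $\alpha_{i,j}=(P_{\tilde m}u_j,u_i)$. Writing $u_j = P_{\tilde m}u_j + (I-P_{\tilde m})u_j$ and using that $(I-P_{\tilde m})u_j$ is $L^2$-orthogonal to $\W_{\tilde m}$ (in particular to $P_{\tilde m}u_i$), I obtain
\[
  \calC_{ij} = \delta_{ij} - E_{ij}, \qquad E_{ij} := \bigl((I-P_{\tilde m})u_j,\,(I-P_{\tilde m})u_i\bigr).
\]
So the task reduces to bounding the operator norm of $E$.

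Next, I would estimate $\|E\|_{\mathrm{op}}$. For $x\in\R^K$ let $w:=\sum_j x_j u_j$; then $(Ex)_i=((I-P_{\tilde m})w,u_i)$, and Bessel's inequality gives $\|Ex\|_2\le\|(I-P_{\tilde m})w\|_{L^2}$. Because $P_{\tilde m}$ is the $L^2$-best approximation in $\W_{\tilde m}$, it is bounded by the quasi-interpolation error $\|w-\Pi_{\tilde m}w\|_{L^2}$. Applying the analogue of \eqref{eq_qi} at level $\tilde m$ (the argument of Section~5.2.2 is mesh-independent and transfers verbatim) and using the $a$-orthogonality of the $u_j$, I get
\[
  \|(I-P_{\tilde m})w\|\lesssim \eps\tilde m\,\Vvert w\Vvert,\qquad
  \Vvert w\Vvert^2 = \sum_j x_j^2\,E^j \le E^K\|x\|_2^2.
\]
Substituting the upper bound $E^K\lesssim \rho_{\tilde\ell}^{4/3}(\eps\tilde\ell)^{-2}$ from \eqref{rem:upperbound_K} yields
\[
  \|E\|_{\mathrm{op}}\lesssim \rho_{\tilde\ell}^{2/3}\,\frac{\tilde m}{\tilde\ell}.
\]

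Finally, I would choose $\tilde m<\tilde\ell$ sufficiently small (any fixed fraction of $\tilde\ell$ works since $\rho_{\tilde\ell}\lesssim 1$ with high probability by Remark~\ref{rem_expectation_Nell}) so that $\|E\|_{\mathrm{op}}\le \tfrac12$. Then $\calC=I-E$ is invertible by Neumann series with $\|\calC^{-1}\|_{\mathrm{op}}\le 2$. Converting to the matrix $1$-norm via the standard inequality $\|A\|_1\le\sqrt{K}\,\|A\|_{\mathrm{op}}$ and recalling $K=\calO(\eps^{-p})$ with $p<d$ from Remark~\ref{rem_Kissmall}, I arrive at $\|\calC^{-1}\|_1\lesssim \sqrt{K}\lesssim \eps^{-p/2}$, which is the claimed polynomial bound with $r=p/2$.

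The main obstacle is really Step~2: one must bound $\|E\|_{\mathrm{op}}$ uniformly in $K$, not just entrywise. A naive entrywise bound combined with $\|E\|_1\le K\max_{i,j}|E_{ij}|$ would force $\tilde m/\tilde\ell\lesssim 1/\sqrt{K}$, which degenerates for small $\eps$. The remedy is to bundle all eigenfunctions into the single function $w=\sum_j x_j u_j$ and apply the quasi-interpolation estimate once, trading the factor $K$ for the factor $E^K$, which is already controlled by the same $\tilde\ell^{-1}$ scaling that drives the spectral gap. After this, the rest is a routine Neumann-series argument.
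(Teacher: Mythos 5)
Your proposal is correct, and it reaches the conclusion by a genuinely different route than the paper, even though it consumes the same two analytic ingredients: the quasi-interpolation estimate transferred to level $\tilde m$ (i.e.\ $\Vert u-\Pi_{\tilde m}u\Vert\lesssim\eps\tilde m\,\Vvert u\Vvert$, which the paper also uses without further comment) and the upper bound $E^K\lesssim\rho_{\tilde\ell}^{4/3}(\eps\tilde\ell)^{-2}$ from \eqref{rem:upperbound_K}, together with the requirement that $\tilde m$ be a small fraction of $\tilde\ell$. The paper argues in two stages: invertibility is obtained qualitatively, by showing $P_{\tilde m}$ is injective on $\sspan\{u_1,\dots,u_K\}$ via a contradiction between the lower energy bound $(\eps\tilde m)^{-2}$ forced by $P_{\tilde m}u=0$ and the upper bound $(\eps\tilde\ell)^{-2}$ from Lemma~\ref{lem_RayleighBound}; the quantitative bound on $\Vert\calC^{-1}\Vert_1$ is then extracted from a norm-equivalence constant $c_K$ and a lower bound on the diagonal entries $\alpha_{j,j}\ge 1-c'\tilde m/\tilde\ell$. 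You instead identify $\calC=I-E$ with $E$ the Gram matrix of the projection errors $(I-P_{\tilde m})u_j$, bound $\Vert E\Vert_{\mathrm{op}}\lesssim\rho_{\tilde\ell}^{2/3}\,\tilde m/\tilde\ell$ by bundling the eigenfunctions into a single $w=\sum_j x_j u_j$ (Bessel plus $a$-orthogonality, so the factor $K$ is traded for $E^K$), and conclude by Neumann series and $\Vert A\Vert_1\le\sqrt K\,\Vert A\Vert_2$ with $K=\calO(\eps^{-p})$. What your route buys is a genuine control of the smallest eigenvalue of $\calC$ (hence $\Vert\calC^{-1}\Vert_2\le 2$), which is stronger than what a lower bound on the diagonal entries alone can deliver -- indeed the paper's passage from $\max_j|\alpha_{j,j}|$ to $\Vert\calC^{-1}\Vert_1$, resting on the relation between $\Vert\calC\Vert_2$ and $\Vert\calC^{-1}\Vert_2$, is stated rather loosely, and your spectral perturbation argument repairs exactly that point while also making the exponent $r=p/2$ explicit. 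What the paper's version buys is a shorter, more transparent invertibility argument that isolates the incompatibility of the two energy scales $(\eps\tilde m)^{-2}$ and $(\eps\tilde\ell)^{-2}$ as the reason $P_{\tilde m}$ cannot annihilate a low-energy combination. Your observation that a naive entrywise bound on $E$ would force $\tilde m/\tilde\ell\lesssim K^{-1/2}$ is accurate and is precisely why the bundled operator-norm estimate is the right move.
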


\begin{proof}	
The entries of the matrix $\calC$ are given by
$$
\alpha_{i,j} = (u_i , v_j^{(0)}  ) = (P_{\tilde{m}} u_i , P_{\tilde{m}} u_j ) =  ( v_i^{(0)}  , v_j^{(0)}  ).
$$
Hence, $\calC$ is a symmetric mass matrix and invertible if the functions $v_j^{(0)}$ are linearly independent. The linear independence follows from the injectivity of $P_{\tilde{m}}$ on the span of the first $K$ eigenfunctions, which is proved by contradiction. Assume that there exists $\mathbf{a} \in \R^K \setminus \{ 0\}$ and $u := \sum_{j=1}^K \mathbf{a}_j u_j$ such that $P_{\tilde{m}}u=0$. 
Then $u$ is in the $L^2$-orthogonal complement of $\W_{\tilde m}$ and we have $\| u \| = \| u - P_{\tilde{m}} u \| \le \| u - \Pi_{\tilde{m}} u \| $. This and \eqref{eq_qi} show that 
\begin{equation*}
  E(u) \gtrsim (\eps \tilde{m})^{-2}.
\end{equation*}
From Lemma~\ref{lem_RayleighBound} and \eqref{rem:upperbound_K} we also have the upper bound 
\begin{equation*}
  E(u) \lesssim (\eps \tilde{\ell})^{-2}.
\end{equation*}
If $\tilde m$ is sufficiently small compared to $\tilde \ell$, the previous lower and the upper energy bounds are contradictive. Hence, $P_{\tilde{m}}$ is injective and the functions $v_j^{(0)}$ are linearly independent, which in turn implies the regularity of the matrix $\calC$. 
Since $\| \calC \|_2^{-1} = \| \calC^{-1} \|_2$, we can bound the $1$-norm by   
\begin{align*}
\| \calC^{-1} \|_1 \le c_K \| \calC \|_1^{-1} \le c_K \Big( \max_{1\le j \le K} |\alpha_{j,j}|\ \Big)^{-1}
\end{align*}
with some constant $c_K$ that depends at most polynomially on $K$ through norm equivalence. Using \eqref{eq_qi} and $\Vert u_j -  P_{\tilde{m}} u_j \Vert   \le \Vert u_j -  \Pi_{\tilde{m}} u_j \Vert   $ we have
\begin{align*}
  \alpha_{j,j}
  = 1 - (u_j, u_j- P_{\tilde{m}} u_j)
  \ge 1 - \Vert u_j -  \Pi_{\tilde{m}} u_j \Vert 
  \ge 1 - c\, \eps \tilde{m}\, \Vvert u_j \Vvert 
  = 1 - c\, \eps\tilde{m}\, \sqrt{E^j}.
\end{align*}
The upper bound $E^K \lesssim \rho_{\tilde \ell}^{4/3} (\eps \tilde{\ell})^{-2}$ from \eqref{rem:upperbound_K} with anisotropy constant $\rho_{\tilde \ell}$ then implies 
\[
  \alpha_{j,j}
  \ge 1 - c\, \eps\tilde{m}\, \sqrt{E^K}
  \ge 1 - c' \tilde{m}/\tilde \ell.
\]
Hence, we can bound $\| \calC^{-1} \|_1$ by a constant that depends at most polynomially on $K$, respectively polynomially on $\eps^{-1}$.

Recall once more that the space $\W_{\tilde m}$ is with high probability local, cf.~Remark~\ref{rem_Kissmall} where the argument is elaborated. 
This implies that $V^{(0)}$ is a suitable starting subspace. 
\end{proof}
With this, all assumptions of Theorem \ref{theorem-convergence-inexact-block} (and Remark \ref{remark-on-theorem-convergence-inexact-block}) are verified. Starting from an initial space that is spanned the few localized basis functions in $V^{(0)}$, we can approximate the first $N_L$ eigenfunctions of $\calH$ in $\calO(\log(1/\tol) /\log(1/\gap))$ steps with an accuracy of order $\tol$ times the energy of the functions in $V^{(0)}$. 
Since the support of the starting space only increases slowly during the iteration process, we conclude that the eigenfunctions are well approximated in a domain that has a volume of order $\calO(\eps^{d-p})$, up to logarithmic terms, with $d-p>0$. 
Hence, the exponential localization of the eigenfunctions is shown asymptotically for vanishing  $\eps \rightarrow 0$. 
\begin{remark}
In the range $\beta \approx \eps^{-2}$ we have the stability estimate $\Vvert \Pi_{\tilde{m}} u\Vvert \lesssim \tilde m^2 \Vvert u\Vvert$ and thus, the functions in $V^{(0)}$ satisfy $\Vvert v_j^{(0)}\Vvert = \Vvert \Pi_{\tilde{m}} u_j\Vvert \lesssim \log^2(1/\eps) \Vvert u_j \Vvert$. 
\end{remark}
%
%
%
\subsection{Domino block potential}\label{sect:gaps:domino}
As a third example we consider a potential that is formed by a disordered domino block structure. This example aims to demonstrate how our technique can be applied if the $\alpha$-valleys are not surrounded by $\beta$-layers, i.e., a setting that cannot be reduced to a quasi-one-dimensional case. In order to prove the existence of relevant spectral gaps we again follow the ideas from the previous two examples. 
\subsubsection{Description of the potential}
To make the setting precise we call $B$ a $j$-domino block (or simply $j$-block) if it is a closed cuboid in $\R^d$ consisting of elements in $\calT$, which is composed of an $\alpha$-cube with side length $\eps j$ and a $\beta$-cube with the same side length. Hence, such a cuboid has the dimension $2\eps j$ in one space direction and $\eps j$ in all other directions. Such a domino block has no preferred orientation. We shall now assume that the potential is formed by a non-overlapping union of such $j$-domino blocks where $j\in\N$ can take any value between $1$ and $L$. An example for such a potential is given in Figure~\ref{fig_potentials} (lower left). The set of all these blocks, which then defines the potential $V$, is denoted by $\mathcal{B}$ and the set of all $j$-domino blocks by $\mathcal{B}_j$. We observe that
$$
  \overline{D} 
  \, =\, \bigcup\nolimits_{B\in \mathcal{B}} B 
  \, =\, \bigcup\nolimits_{j=1}^L \bigcup\nolimits_{B\in \mathcal{B}_j} B.
$$
We further assume that the probability of finding a small domino block is much higher then finding a large domino block. More precisely, we assume that when selecting a domino block from $\mathcal{B}$ the expectation that it is a $j$-block is approximately $2^{-j}$. Note that the parameter $L$ is, except for unlikely situations, the same as in Section~\ref{sec:evp:cutoff}. For a $j$-domino block $B \in \mathcal{B}_j$ we denote the cube where $V$ is equal to $\alpha$ by $B_{\alpha}\subset B$. 
\begin{remark}
Since the total number of domino blocks in $D$ can be at most of order $\eps^{-d}$, we conclude that the expected number of $j$-blocks is of order  $2^{-j}\eps^{-d}$. Hence, for any $j\ge d\, \log(1/\eps)$ we expect the number of $j$-blocks to satisfy 
$$
N_j := |\mathcal{B}_j| = \calO(1).
$$
\end{remark}
As for the tensorized potential in Section~\ref{sect:gaps:random:bounds}, we will show the existence of a relevant spectral gap after the first $K$ eigenvalues. 
For this, we again prove upper and lower eigenvalue bounds and construct certain finite element spaces in the largest valleys. 
\subsubsection{Quasi-interpolation operator}\label{sect:gaps:domino:quasiInt}
Once more we need to introduce a suitable local finite element space on a subset of $D$ and to exploit the approximation properties of the Scott-Zhang quasi-interpolation operator. 
For that purpose, let us fix a level $1<\tilde\ell<L$ satisfying $\tilde\ell\gtrsim \log(1/\eps)$. With this, we consider all sets $\mathcal{B}_j$ of $j$-blocks at level $j>\tilde{\ell}$. Next, we restrict our attention to the $\alpha$-parts of these domino blocks and define the set
$$
  D_{\alpha,\, >{\tilde\ell}} \, :=\, 
  \bigcup\nolimits_{j=\tilde \ell + 1}^{L} D_{\alpha, j},  
  \qquad \mbox{where }\ D_{\alpha, j}:= \bigcup\nolimits_{B \in \mathcal{B}_j } B_{\alpha}. 
$$ 
On $D_{\alpha,\, >{\tilde\ell}}$ we introduce a uniform Cartesian mesh with mesh size $h\approx \eps\tilde\ell$ and extend it by one more layer of elements with width $\eps/2$, cf.~Section~\ref{sect:gaps:random:quasiInt}. Observe that the extended mesh will intersect the $\beta$-parts of all contributing domino blocks, but it will also intersect other domino-blocks on possibly lower levels. 
The space occupied by the extended mesh is denoted by $\tilde{D}_{\alpha,\,  >{\tilde\ell}}$. On $\tilde{D}_{\alpha,\, >{\tilde\ell}}$ we consider the arising $Q_1$-finite element space based on the extended mesh and with homogeneous Dirichlet boundary conditions, as before. The resulting space is denoted by $\W_{\tilde{\ell}}$ and we have
\begin{align*}
  \dim \W_{\tilde{\ell}}
  \, =:\, K 
  \, \lesssim\, \sum\nolimits_{j=\tilde \ell+1}^L N_j\, \big\lfloor j /{\tilde\ell} \big\rfloor^{d},
\end{align*}
which is expected to be of order $\calO(\eps^{-p})$ for some $p<d$, cf.~Remark~\ref{rem_Kissmall}. 
Note that this also implies that $\W_{\tilde{\ell}}$ is a local space, since its basis functions are only supported on few subsets with maximum diameter~$\eps L$. 
Based on the Scott-Zhang operator $\ISZ$, restricted to the mesh on $D_{\alpha,\, >{\tilde\ell}}$, we uniquely define the local projection operator 
$$
\Pi_{\tilde{\ell}} : \V \rightarrow \W_{\tilde{\ell}}
$$
by means of the property $\Pi_{\tilde \ell} u = \ISZ u$ on $D_{\alpha,\,  >{\tilde\ell}}$. As in the previous example, $\Pi_{\tilde{\ell}}$ prevents that information spreads from $\beta$-regions into $D_{\alpha,\, >{\tilde\ell}}$. At the same time it also prevents that information from smaller domino blocks spreads into $D_{\alpha,\, >{\tilde\ell}}$. The key to the analysis is again an interpolation error estimate.
\begin{lemma}\label{lemma:interpol-est:domino}
Consider the domino block potential from above under Assumption \ref{ass_epsBeta}. Then, for the local projection operator $\Pi_{\tilde{\ell}}$ and all $u \in \V$ it holds that
\begin{align*}
  \Vert u - \Pi_{\tilde \ell}\, u \Vert 
  \, \lesssim\,  \eps\tilde\ell\, \Vvert u \Vvert.
\end{align*}
\end{lemma}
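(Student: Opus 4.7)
The proof follows the template of the estimate \eqref{eq_qi} from Section~\ref{sect:gaps:random:quasiInt}, with modifications to handle the absence of a $\beta$-buffer between $\alpha$-regions of different domino blocks. The plan is to split
\[
  \Vert u - \Pi_{\tilde\ell} u\Vert^2
  \ \le\ \Vert u-\ISZ u\Vert^2_{D_{\alpha,>\tilde\ell}}
  + \sum_{j\le \tilde\ell} \sum_{B\in\mathcal{B}_j} \Vert u\Vert^2_{B_\alpha}
  + 2\Vert u\Vert^2_{D_\beta}
  + 2\Vert \Pi_{\tilde\ell} u\Vert^2_{\tilde D_{\alpha,>\tilde\ell}\setminus D_{\alpha,>\tilde\ell}}
\]
and to bound each of the four terms by a constant multiple of $(\eps\tilde\ell)^2 \Vvert u\Vvert^2$.

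For the first term, I would invoke the standard Scott--Zhang approximation estimate on the uniform Cartesian mesh of size $h\approx \eps\tilde\ell$ covering $D_{\alpha,>\tilde\ell}$, obtaining $\Vert u-\ISZ u\Vert_{D_{\alpha,>\tilde\ell}}\lesssim \eps\tilde\ell\, \Vert \nabla u\Vert_{D_{\alpha,>\tilde\ell}}$, exactly as in the tensor-product case. For the second term, I would reuse the cutoff-and-Friedrichs argument that led to \eqref{eq_smallValleys}: each small block $B\in\mathcal{B}_j$ with $j\le\tilde\ell$ contains a $\beta$-half composed of $j^d$ elements of $\calTb$, so the cutoff $\eta$ from Section~\ref{sec:evp:cutoff} vanishes on a set of positive measure inside $B$; since $B$ itself is a cube of side length at most $2\eps\tilde\ell$, a localized Friedrichs inequality on $B$ combined with Assumption~\ref{ass_epsBeta} gives $\Vert u\Vert_{B_\alpha}\le \Vert \eta u\Vert_B \lesssim \eps\tilde\ell\, \Vvert u\Vvert_B$, and the disjointness of the blocks then yields the bound $\sum_{j\le\tilde\ell}\sum_{B\in\mathcal{B}_j}\Vert u\Vert^2_{B_\alpha}\lesssim (\eps\tilde\ell)^2\Vvert u\Vvert^2$. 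The third term is immediate from Assumption~\ref{ass_epsBeta}, namely $\Vert u\Vert_{D_\beta}^2\le\beta^{-1}\Vert u\Vert_{V,D_\beta}^2\lesssim \eps^2\Vvert u\Vvert^2$.

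The delicate step, and the main obstacle, is the fourth term. Unlike the tensor-product setting, where the $\eps/2$-collar always lies inside $D_\beta$, in the domino geometry a collar element $T\subset\tilde D_{\alpha,>\tilde\ell}\setminus D_{\alpha,>\tilde\ell}$ may extend into the $\alpha$-part of a neighboring smaller domino block, where the bound $\beta^{-1}\Vert\cdot\Vert_V^2$ is unavailable. The key observation is that $\Pi_{\tilde\ell} u$ is a $Q_1$-polynomial on $T$ whose nodal values are completely determined by Scott--Zhang edge averages over edges of the mesh on $D_{\alpha,>\tilde\ell}$. Consequently, the trace identity of \cite{CarGR12}, applied exactly as in Section~\ref{sect:gaps:random:quasiInt}, bounds each nodal value by $|\tilde T|^{-1/2}(\Vert u\Vert_{\tilde T}+\eps\tilde\ell\,\Vert\nabla u\Vert_{\tilde T})$ for neighboring elements $\tilde T\in N(T)\subset D_{\alpha,>\tilde\ell}$, yielding $\Vert \Pi_{\tilde\ell} u\Vert_T^2\lesssim (\eps\tilde\ell)^2\,\Vvert u\Vvert^2_{N(T)}$ irrespective of whether $T$ itself sits in an $\alpha$- or $\beta$-region. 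Summing over all collar elements using finite overlap of the neighborhoods, and combining with the three earlier terms, gives the claimed estimate $\Vert u-\Pi_{\tilde\ell} u\Vert\lesssim \eps\tilde\ell\,\Vvert u\Vvert$.
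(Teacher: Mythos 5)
Your decomposition and the treatment of the first three terms are sound, and your block-by-block argument for the small domino blocks is a legitimate alternative to the paper's route: the paper instead proves a single Friedrichs-type estimate $\Vert u\Vert_{D\setminus D_{\alpha,>\tilde\ell}}\lesssim \eps\tilde\ell\,\Vvert u\Vvert$ via averaged Taylor polynomials and a \emph{modified} cut-off adapted to the domino structure. One caveat on your version: ``vanishes on a set of positive measure inside $B$'' is not enough for a $j$-uniform constant; what you actually need (and have) is that the vanishing $\eps/2$-cubes of $\eta$ fill a \emph{fixed fraction} $2^{-(d+1)}|B|$ of the block, since $B$ contains its own $\beta$-half with $j^d$ elements of $\calTb$. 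With that said explicitly, $\Vert u\Vert_{B_\alpha}\le\Vert\eta u\Vert_B\lesssim\eps j\,\Vert\nabla(\eta u)\Vert_B\lesssim\eps\tilde\ell\,\Vvert u\Vvert_B$ follows with a constant depending only on $d$, which is precisely the pollution-free bound the paper obtains with its modified cut-off.

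The genuine gap is in your collar step. The trace identity gives, for a collar element $T$ and the elements $\tilde T$ you chose on the \emph{interior} side, only $\Vert\Pi_{\tilde\ell}u\Vert_T^2\lesssim\Vert u\Vert^2_{N(T)}+(\eps\tilde\ell)^2\Vert\nabla u\Vert^2_{N(T)}$; the further step to $(\eps\tilde\ell)^2\Vvert u\Vvert^2_{N(T)}$ is exactly where the tensor-product proof used $N(T)\subset\Ob$ together with $\beta^{-1}\lesssim\eps^2$. With $N(T)\subset D_{\alpha,>\tilde\ell}$ this absorption fails: there the energy norm carries only the weight $\alpha$, and under Assumption~\ref{ass_epsAlpha} (indeed possibly $\alpha=0$) a function that is locally constant near the interface has $\Vvert u\Vvert_{N(T)}$ arbitrarily small while $\Vert u\Vert_{N(T)}$ is not, so the claimed elementwise bound is false ``irrespective of whether $T$ sits in an $\alpha$- or $\beta$-region''. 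The repair is to apply the trace identity from the \emph{collar} side, $N(T)\subset\tilde D_{\alpha,>\tilde\ell}\setminus D_{\alpha,>\tilde\ell}$, keep the term $\Vert u\Vert^2_{N(T)}$, and observe that the collar lies in $D\setminus D_{\alpha,>\tilde\ell}=\Ob\cup\bigcup_{j\le\tilde\ell}\bigcup_{B\in\mathcal B_j}B_\alpha$, so the summed $L^2$ contribution is controlled by exactly the bounds you already established in your second and third terms; this is what the paper does, absorbing it through its Friedrichs-type estimate on $D\setminus D_{\alpha,>\tilde\ell}$. With that modification your argument closes and is essentially equivalent to the paper's proof.
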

\begin{proof}
We split the domain into three parts 
$$
  D = D_{\alpha,\, >{\tilde\ell}} \hspace{3pt} 
    \cup \hspace{3pt} \big(D \setminus  \tilde{D}_{\alpha,\, >{\tilde\ell}} \big) \hspace{3pt} \cup \hspace{3pt} \big(\tilde{D}_{\alpha,\, >{\tilde\ell}} \setminus D_{\alpha,\, >{\tilde\ell}} \big)
$$
and estimate $u - \Pi_{\tilde \ell} u$ on these sub-domains individually. On $D_{\alpha,\, >{\tilde\ell}}$, the estimate follows immediately with the properties of the Scott-Zhang operator, see Section~\ref{sect:gaps:periodic}. 
On $D \setminus \tilde{D}_{\alpha,\, >{\tilde\ell}}$ we have that $\Pi_{\tilde \ell} u=0$. Consequently, the claimed estimate reduces to an estimate of $u$. For this, we derive once more a Friedrichs-type inequality as in Lemma~\ref{lem_eta_Friedrich}. Considering averaged Taylor polynomials as in Appendix~\ref{appendix-a} and introducing a modified cut-off function that exploits the particular structure of the potential (a large $\alpha$-block is always adjacent to a $\beta$-block of the same size), we obtain
\begin{align*}
  \| u\|_{D \setminus  D_{\alpha,\, >{\tilde\ell}}} \lesssim \eps \ell \hspace{2pt} \Vvert u\Vvert_{D \setminus  D_{\alpha,\, >{\tilde\ell}}}.
\end{align*}
We emphasize that this estimate is free from pollution constants $\kappa_\calT$ and ${\tilde \ell}^d$ that occur in the general case.
Finally, for the estimate in $\tilde{D}_{\alpha,\, >{\tilde\ell}} \setminus D_{\alpha,\, >{\tilde\ell}}$ we can proceed analogously as in Section~\ref{sect:gaps:random:quasiInt}. In particular, for any element $T$ of the extended mesh that lies in the layer $\tilde{D}_{\alpha,\, >{\tilde\ell}} \setminus D_{\alpha,\, >{\tilde\ell}}$ we have 
\begin{align*}
\Vert \Pi_{\tilde \ell}\, u \Vert_T^2
\lesssim |T|\, \sum_{\tilde T \in N(T)} \frac{1}{|\tilde T|} \Big( \Vert u\Vert^2_{\tilde T} + (\eps\tilde\ell)^2 \Vert \nabla u \Vert^2_{\tilde T} \Big)
\lesssim \Vert u \Vert^2_{N(T)} + (\eps\tilde\ell)^2\, \Vert \nabla u \Vert^2_{N(T)},
\end{align*}
where $N(T) \subset \tilde{D}_{\alpha,\, >{\tilde\ell}} \setminus D_{\alpha,\, >{\tilde\ell}}$ is the union of $T$ and all its neighbors in $\tilde{D}_{\alpha,\, >{\tilde\ell}} \setminus D_{\alpha,\, >{\tilde\ell}}$. The combination of all these estimates finishes the proof.
\end{proof}
With the quasi-interpolation operator in hand we are ready to establish eigenvalue bounds based on the max-min eigenvalue characterization. 
\subsubsection{Eigenvalue bounds}\label{sect:gaps:domino:bounds}
As for the tensor potential we directly obtain by Assumption~\ref{ass_epsAlpha} the upper eigenvalue bound $E^{N_L} \lesssim (\eps L)^{-2}$. By Lemma \ref{lemma:interpol-est:domino} and the max-min principle we also have
\[
  E^{K+1}
  = \adjustlimits \max_{\dim\V^{(K)}=K\ } \min_{v \in [\V^{(K)}]^\text{c}}\, E(v)
  \ \ge\ \min_{v \in \W_{\tilde{\ell}}^\text{c}}\, E(v)
  \ \gtrsim\  \frac{1}{(\eps\tilde\ell)^{2}}.
\]
Consequently, Corollary \ref{cor_generalCase} remains valid and for sufficiently small $\tilde \ell$ we obtain a spectral gap of order $\calO(1)$ as we have
\begin{equation*}
  \frac{E^{N_L}}{E^{K+1}}
  \ \lesssim\ \frac{\tilde\ell^2}{L^2}.
\end{equation*}
For further discussions on this estimate, we refer to Remark \ref{rem_Kissmall}. Furthermore, note that the arguments from Section \ref{sect:gaps:random:bounds} also allow us to derive the upper eigenvalue bound
\begin{equation*}
  E^K \lesssim (\eps \tilde{\ell})^{-2}.
\end{equation*}
\subsubsection{Starting subspace}\label{sect:gaps:domino:starting}
Finally, it remains to show the existence of a suitable set of starting functions $V^{(0)}$ to apply Theorem~\ref{theorem-convergence-inexact-block}. This can be done analogously to the case of a tensorized potential as elaborated in Section~\ref{sect:gaps:random:starting}. This can be done without modifying the arguments, thanks to the availability of the interpolation error estimate from Lemma~\ref{lemma:interpol-est:domino}. In this case, we define $V^{(0)}$ as the projection of the first $K$ eigenfunctions of the Schr\"odinger operator $\calH$ into a refined version of the local finite element space $\W_{\tilde \ell}$. 
As before, this choice also ensures the invertibility of the matrix~$\calC$, cf.~Lemma~\ref{lem:startingV}. 

In conclusion, Theorem \ref{theorem-convergence-inexact-block} and Remark \ref{remark-on-theorem-convergence-inexact-block} imply that the first $N_L$ eigenfunctions can be expressed as the union of $K$ functions that show an exponential decay in units of $\eps$. Since $K$ is (with high probability) sufficiently small compared to $\eps^{-d}$, we obtain that the first $N_L$ eigenfunctions are exponentially localized, where the localization centers are the $j$-domino blocks with the highest $j$-levels.
\section{Conclusion}
This paper provided quantitative estimates for the lowermost part of the spectrum of random Schr\"odinger operators in the PDE setting that cannot be extracting from existing theoretical studies. These findings provide a theoretical basis for the recent trend on computational studies of Anderson localization in the linear Schr\"odinger eigenvalue problem, e.g., \cite{ArnDJMF16,Steinerberger2017,LuS18,AltPV18,ArnDFJM19,2018arXiv180600565X}. Moreover, the constructive proofs inspire novel localized computational approaches for the approximation of localized states.  
The block inverse iteration used in the proof can be turned into a fast algorithm for the computation of eigenstates and spectra as presented, e.g., in~\cite{AltP19}.

In addition to the particular results on the Schr\"odinger eigenvalue problem, the paper illuminates the large potential of classical tools of numerical analysis such as domain decomposition and the theory of iterative solvers for the mathematical analysis of multiscale partial differential equations and the corresponding eigenvalue problems. Since Anderson localization is an almost universal wave phenomenon known also for sound \cite{2008NatPh...4..945H} and  electromagnetic waves (in particular light) \cite{1997Natur.390..671W} we believe that the techniques presented here will be useful in many other physical contexts. 
%
%
\newcommand{\etalchar}[1]{$^{#1}$}

%
%
\begin{appendix}
\section{A Friedrichs-type estimate for truncated functions}
\label{appendix-a}
In the following we prove the Friedrichs inequality of the form
$$
\| \eta v \|_{L^2(D)} \lesssim \kappa_\calT L^d \hspace{2pt} \eps L \hspace{2pt} \|\nabla (\eta v) \|_{L^2(D)},
$$
for truncated functions $\eta v$, as formulated in Lemma \ref{lem_eta_Friedrich}.
%
\begin{proof}[Proof of Lemma \ref{lem_eta_Friedrich}]
Let $q \in \calTb$ be a $\beta$-cube with edge length $\eps$ and barycenter $x_q$. Denoting $\rho:= \eps/4,$ we know that the ball $B_{\rho}(x_{q})$ with radius $\rho$ and center $x_q$ is fully contained in $q$. This is illustrated in Figure \ref{fig_appendix_label}. 

In this proof we shall exploit the theory of so-called averaged Taylor polynomials, cf.~\cite[Ch.~4.1]{BreS08}. For a function $w\in \V=\Hper$ we let $T(w)$ denote the first-order averaged Taylor polynomial obtained by weighted averages over the ball $B_{\rho}(x_{q}) $. More precisely, we define $T(w)$ by
$$
T(w) = \int_{ B_{\rho}(x_{q}) } w(y) \phi(y) \dy \equiv \mbox{const}
$$
for some cut-off function $\phi$. Though the precise shape of $\phi$ is not relevant for our purposes, we assume that it is smooth, $\phi \in C^{\infty}_0(\R^d)$, that it only has support on the ball $B_{\rho}(x_{q})$ and that it has the average $\int_{\R^d} \phi(y) \hspace{2pt} dy =1$. Since we have by construction that $w=\eta v \equiv 0$ in $B_{\rho}(x_{q})$, we see that $\eta v$ does not intersect the support of $\phi$. Consequently we trivially have $T(\eta v) \equiv 0$. Using this observation, we obtain the following representation of $(\eta v)(x)$ for any $x \in D$ (as proved in \cite[Prop.~4.2.8]{BreS08}):
\begin{align}
\label{representation-eta-v}
(\eta v)(x) = (\eta v)(x) - T(\eta v)(x)  =  \int_{C_{q}(x)}  k(x,z) \hspace{2pt} (x-z) \cdot \nabla (\eta v)(z)  \dz.
\end{align}
Here, $C_{q}(x)$ is the convex hull of the point $\{x\}$ and the ball $B_{\rho}(x_{q})$ (cf. Figure \ref{fig_appendix_label}) and $k$ is a function fulfilling
\begin{align}
\label{est-k-brenner-scott}
|k(x,z)| \lesssim \left( 1 + \frac{|x-x_{q}|^d }{\rho} \right) |z-x|^{-d}.
\end{align}
Let  $\hat{D}_q \subset \R^d$ be an arbitrary subdomain that is star-shaped with respect to the ball $B_{\rho}(x_{q})$. Then for any $x \in \hat{D}_q$ we have that the cone $C_{q}(x)$ is fully contained in $\hat{D}_q$ (see again Figure \ref{lem_eta_Friedrich} for illustration) and hence we obtain
\begin{eqnarray*}
\| \eta v \|_{L^2(\hat{D}_q)}^2
&{\overset{\eqref{representation-eta-v}}{=}}&
{\int_{\hat{D}_q} \left( \int_{C_{q}(x)}  k(x,z) \hspace{2pt} (x-z) \cdot \nabla (\eta v)(z)  \dz \right)^2 \hspace{2pt} dx}\\
&\overset{{\eqref{est-k-brenner-scott}}}{\lesssim}& \int_{\hat{D}_q} \left( \int_{C_{q}(x)}   \left( 1 + \frac{|x-x_{q}| }{\rho} \right)^d |z-x|^{-d+1} |\nabla (\eta v)(z)| \dz \right)^2 \dx \\
&\lesssim& \Big( 1 + \frac{\mbox{dist}(x_{q} , \partial \hat{D}_q)^d }{\rho} \Big)^2 \int_{\hat{D}_q} \left( \int_{C_q(x)}    |z-x|^{-d+1} |\nabla (\eta v)(z)| \dz \right)^2 \dx \\
&{\overset{C_q(x) \subset \hat{D}_q}{\lesssim}}& \Big( 1 + \frac{\mbox{dist}(x_{q} , \partial \hat{D}_q) }{\rho} \Big)^{2d} \sup_{ x \in \hat{D}_q } \left( \int_{\hat{D}_q} |z-x|^{-d+1} \dz \right)\\
&\enspace&\hspace{45pt} \cdot \int_{\hat{D}_q} \int_{\hat{D}_q}   |z-x|^{-d+1} |\nabla (\eta v)(z)|^2 \dz \dx \\
&\lesssim& \Big( 1 + \frac{\mbox{dist}(x_{q} , \partial \hat{D}_q) }{\rho} \Big)^{2d} \sup_{ x \in \hat{D}_q } \left( \int_{\hat{D}_q} |z-x|^{-d+1} \dz \right)^2 \|\nabla (\eta v) \|_{L^2(\hat{D}_q)}^2. 
\end{eqnarray*}

\begin{figure}
\begin{center}
\begin{tikzpicture}[scale=1.5]
	\foreach \point in {(0,4), 
					    (1,5), (1,6), 
					    (2,4), (4,4),
					    (6,5), (6,6)} {
		\fill[llgray] \point -- +(0, 1) -- +(1, 1) -- +(1, 0);
	}
	\foreach \x in {1, 5} {		
		\fill[llgray] (\x, 3.5) -- (\x, 4) -- (\x+1, 4) -- (\x+1, 3.5) -- cycle;
	}
	\foreach \x in {0, 1, 2, 5, 6} {		
		\fill[llgray] (\x, 7.5) -- (\x, 7) -- (\x+1, 7) -- (\x+1, 7.5) -- cycle;
	}	
	\foreach \y in {5, 6} {		
		\fill[llgray] (0, \y) -- (0, \y+1) -- (-0.5, \y+1) -- (-0.5, \y) -- cycle;
	}
	%
	\foreach \y in {4, 5, 6, 7} {
		\draw[thin] (-0.5, \y) -- (7.5, \y);
	}
	\foreach \x in {0, 1, 2, 3, 4, 5, 6, 7} {
		\draw[thin] (\x, 3.5) -- (\x, 7.5);
	}
	\fill[pattern=north east lines, pattern color=green, opacity=0.4] (1, 5) -- (1, 6) -- (2, 6) -- (2, 5) -- cycle;
	\draw[black, line width=0.3mm, dashed ]  (1.5,5.5) circle (0.3cm);
	\fill[black, line width=0.05mm ]  (1.5,5.5) circle (0.03cm);
    \fill[myRed, line width=0.05mm ]  (4.5,6.3) circle (0.03cm);
    \draw[myRed, line width=0.3mm, dashed ]  (1.5,5.8) -- (4.5,6.3);
    \draw[myRed, line width=0.3mm, dashed ]  (1.548,5.2) -- (4.5,6.3);
    \draw[blue, line width=0.7mm,  ]   plot[smooth, tension=.7] coordinates {(1.2,3.5)(0.2,3.8) (0.25,6.5) (1,7.0) (3,7.1) (5,6.9) (7,6.5) (7,5.5) (6.9,4.5) 
        (5,4.2) (2,3.7)  (1.2,3.5)};
    \node at (1.5, 4.8) {{\color{myGreen2}\Large$q$}};
    \node at (1.5, 5.35) {{\color{black}\normalsize$x_q$}};
    \node at (0.6, 5.5) {{\color{black}\large$B_{\rho}(x_q)$}};
    \node at (4.7, 6.4) {{\color{myRed}\Large$x$}};
    \node at (6.5, 4.7) {{\color{blue}\Large$\hat{D}_q$}};
    \node at (3.5, 5.6) {{\color{myRed}\Large$C_q(x)$}};
\end{tikzpicture} 
\end{center}
\caption{Illustration of a possible setup used in the proof of Lemma \ref{lem_eta_Friedrich}.} 
\label{fig_appendix_label}
\end{figure}
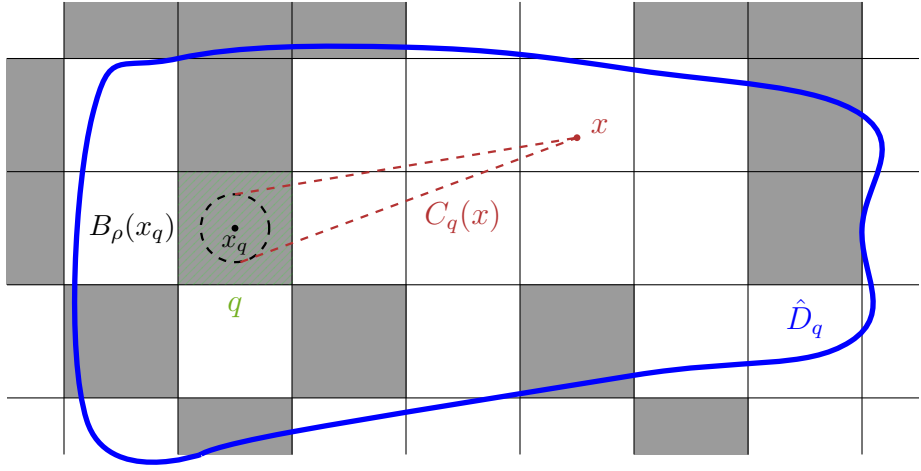

If we select $\hat{D}_q$ as the cube with center $x_{q}$ and edge length $\eps (2L+1)\lesssim \eps L$ 
then we have
\begin{align*}
 1 + \frac{\mbox{dist}(x_{q} , \partial \hat{D}_q) }{\rho} 
 \le 1 + 4 \frac{(2L+1)\eps}{ \eps } \lesssim L
 \qquad
 \mbox{and}
 \qquad
 \int_{\hat{D}_q} |z-x|^{-d+1} \dz \le \eps L.
\end{align*}
This reduces the estimate to
\begin{align}
\label{local-estimates-friedrichs}
\| \eta v \|_{L^2(\hat{D}_q)}
&\lesssim L^{d} \eps L \|\nabla (\eta v) \|_{L^2(\hat{D}_q)}.
\end{align}
Since the union of all such cubes $\hat{D}_q$ with center $x_q$ and edge length $\eps L$ (i.e. $\bigcup_{q \in \calTb} \hat{D}_q$) forms a cover of $D$ with maximal overlap $\kappa_\calT \le L^d$, we conclude from the local estimates \eqref{local-estimates-friedrichs} that it holds
\[
  \| \eta v \|_{L^2(D)}
  \lesssim \kappa_\calT L^{d} \eps L \|\nabla (\eta v) \|_{L^2(D)}. 
  \qedhere
\]
\end{proof}
It is worth to note that the previous estimate can be improved if we make additional structural assumptions on $V$ that allow to decompose $D$ into the disjoint union of certain star-shaped blocks that are centered in a $\alpha$-valley and with a diameter that is of order~$\eps L$. Practically, this would be a moderate assumption, which allows to avoid the overlap parameter $\kappa_\calT$ in the estimate. On the other hand, one would also have to track the size of maximal $\beta$-cubes, which would lead to an estimate of the form
$$
\| \eta v \|_{L^2(D)} \lesssim \Big( 1 + \frac{L}{ L_{\beta}^{\mbox{\tiny min}} } \Big)^{d} \eps\, \big(L+ L_{\beta}^{\mbox{\tiny max}}\big)\, \|\nabla (\eta v) \|_{L^2(D)},
$$
where, analogously to the definition of $L$, 
$$
L_{\beta}^{\mbox{\tiny min}} := \min_{Q\in \calQb} \frac{h_Q}{\eps}
\qquad
\mbox{and}
\qquad
L_{\beta}^{\mbox{\tiny max}} := \max_{Q\in \calQb} \frac{h_Q}{\eps}.
$$
For the sake of simplicity and to avoid technical assumptions, this direction was not pursued in the present paper. 
\end{appendix}
\end{document}